\documentclass[11pt,reqno]{amsart}
\usepackage[letterpaper,margin=1in]{geometry}
\usepackage[T1]{fontenc}
\usepackage{lmodern}
\usepackage[utf8]{inputenc}
\usepackage{amsmath,amssymb,amsthm,mathtools}
\usepackage{booktabs,array}
\usepackage{tikz}
\usetikzlibrary{arrows.meta,calc,decorations.markings,decorations.pathreplacing,positioning}
\usepackage{graphicx}
\usepackage{subcaption}
\usepackage{xcolor}
\usepackage{listings}
\usepackage{microtype}
\usepackage{mathrsfs}

\newtheorem{theorem}{Theorem}[section]
\newtheorem{proposition}[theorem]{Proposition}

\newtheorem{lemma}[theorem]{Lemma}
\newtheorem{corollary}[theorem]{Corollary}
\theoremstyle{definition}
\newtheorem{definition}[theorem]{Definition}
\newtheorem{example}[theorem]{Example}
\theoremstyle{remark}
\newtheorem{remark}[theorem]{Remark}
\newtheorem{problem}[theorem]{Problem}
\newcommand{\F}{\mathbb F_2}
\newcommand{\cC}{\mathcal C}
\newcommand{\cB}{\mathcal B}

\newcommand{\rank}{\operatorname{rank}}
\newcommand{\nul}{\operatorname{nullity}}

\definecolor{codegray}{gray}{0.96}
\definecolor{orbitblue}{RGB}{38,126,190}
\definecolor{orbitred}{RGB}{220,45,48}
\definecolor{orbitgreen}{RGB}{43,160,44}
\definecolor{orbitorange}{RGB}{255,127,14}
\definecolor{orbitpurple}{RGB}{152,78,163}
\usepackage[hidelinks]{hyperref}
\numberwithin{equation}{section}
\numberwithin{figure}{section}
\title[Billiard orbits in Young diagrams]
{Billiard Orbits in Young Diagrams: Medial Links, Bicycle Spaces, and Domino Tilings}

\author{David J. Hemmer}
\address{Department of Mathematical Sciences\\
  Michigan Technological University\\
  Houghton, MI 49931}
\email{djhemmer@mtu.edu}
\date{August 2026}

\begin{document}
\begin{abstract}
We study diagonal billiard trajectories inside the Young diagram of an
integer partition $\lambda$. A trajectory has slope $\pm1$, passes straight
through sides shared by adjacent cells, and reflects from the exterior
boundary until it closes. Let $\sigma(\lambda)$ denote the number of
resulting closed orbits. Motivated in part by a talk of Botkin and Schneider
on integer partitions and geometric diagrams arising from African and Indian
mathematical traditions, this construction extends the mirror-curve model of
Chokwe \emph{sona} sand drawings studied by Gerdes from rectangular grids to
arbitrary Young diagrams.

We use graph theory to reduce the problem of counting orbits to linear algebra. Let $G_\lambda$ be the cell-adjacency graph of $\lambda$ with its natural
planar embedding. We identify the billiard orbits with the components of
the medial link of $G_\lambda$ and deduce
\[
\sigma(\lambda)
=
1+\dim_{\mathbb F_2}\mathcal B(G_\lambda)
=
\operatorname{nullity}_{\mathbb F_2}L(G_\lambda),
\]
where $\mathcal B(G_\lambda)$ is the binary bicycle space and
$L(G_\lambda)$ is the mod-$2$ Laplacian. If $\lambda^\square$ is obtained
by deleting the first row and first column of $\lambda$, we further prove
\[
\sigma(\lambda)
=
1+\operatorname{nullity}_{\mathbb F_2}
A(G_{\lambda^\square}).
\]
For rectangular diagrams this recovers Gerdes' formula $\sigma(n^m)=\gcd(m,n)$
through identities for Fibonacci polynomials over $\mathbb F_2$. It also
gives the characterization
\[
\sigma(\lambda)=1
\quad\Longleftrightarrow\quad
\lambda^\square
\text{ has an odd number of domino tilings}.
\]

Using the checkerboard bipartition of $\lambda^\square$, we decompose
$\sigma(\lambda)-1$ into a color-imbalance term (related to the BG-rank of Berkovich–Garvan) and an even
rank-deficiency term. This yields parity restrictions and lower bounds
for the orbit number. It also shows that for any fixed $d$, asymptotically all partitions have more than $d$ orbits. We also show that $\sigma(\lambda)$ is at most the Durfee length of $\lambda$.
We  determine for staircase partitions
\[
\sigma(n,n-1,\ldots,1)
=
\left\lceil\frac{n}{2}\right\rceil.
\]
Finally we show 
the adjacency-nullity formula is independent of the ground field.
\end{abstract}

\maketitle

\section{Introduction}

The billiard construction considered here is inspired by the
\emph{sona} sand-drawing tradition of the Chokwe people of Angola and
neighboring regions and the related traditional floor-drawing practices from India. In some designs, a storyteller marks out a
rectangular array of points and then draws one or more closed curves
winding around the points according to geometric rules. A particularly
simple family, often called \emph{plaited-mat} designs, can be modeled by
a ray traveling at angle $45^\circ$ inside a rectangle and reflecting
from its sides; after the sharp reflections are rounded, one obtains the
familiar woven appearance of the sona. This ``mirror-curve'' interpretation was developed mathematically by
Paulus Gerdes and others; see
\cite{Gerdes1999,vitturi2006sona}. In particular, Gerdes observed that
the number of closed curves in an $m\times n$ rectangular grid is
$\gcd(m,n)$.
Gerdes also considers the effect of placing small double-sided mirrors inside the rectangle.

In a talk of Botkin and Schneider \cite{BotkinSchneider}, the idea was presented to generalize this construction to Young diagrams of arbitrary partition shape, which one could consider as having the entire boundary of the Young diagram made up of mirrors. Related knot-theoretic questions were earlier investigated by Harp and Schneider \cite{HarpSchneider2019}.

Let $\lambda=(\lambda_1,\ldots,\lambda_\ell)$
be a partition, with
$\lambda_1\geq\lambda_2\geq\cdots\geq\lambda_\ell>0.$
We identify its Young diagram with the set
\[
[\lambda]
=
\{(i,j):1\leq i\leq\ell,\ 1\leq j\leq\lambda_i\}.
\]
The box $(i,j)$ is the unit square in row $i$ and column $j$. We often refer to $\lambda$ interchangeably with its Young diagram.

Inside the Young diagram, consider diagonal trajectories of slope
$\pm1$. A trajectory passes straight through a side shared by two boxes
and reflects from a side on the exterior boundary. Starting from the
midpoint of any exterior unit edge produces a periodic trajectory, and
the exterior edge midpoints are partitioned into a finite collection of
such trajectories. We denote the number of trajectories by
$\sigma(\lambda)$. Figure~\ref{fig:orbitexamples} gives several examples
with varying numbers of trajectories, which we may also call orbits. Crossings at shared side midpoints are
not considered junctions: the two strands pass straight through one
another.

\begin{figure}[ht]
\centering
\begin{subfigure}[t]{0.23\textwidth}
  \centering
\begin{tikzpicture}[x=0.68cm,y=0.68cm]
\draw[black!75,line width=0.35pt] (0,2) rectangle (1,3);
\draw[black!75,line width=0.35pt] (1,2) rectangle (2,3);
\draw[black!75,line width=0.35pt] (2,2) rectangle (3,3);
\draw[black!75,line width=0.35pt] (3,2) rectangle (4,3);
\draw[black!75,line width=0.35pt] (0,1) rectangle (1,2);
\draw[black!75,line width=0.35pt] (1,1) rectangle (2,2);
\draw[black!75,line width=0.35pt] (2,1) rectangle (3,2);
\draw[black!75,line width=0.35pt] (0,0) rectangle (1,1);
\draw[orbitblue,line width=1.4pt,line cap=round,line join=round]
(0.000,1.500) -- (0.500,1.000) -- (1.000,0.500) -- (0.500,0.000) -- (0.000,0.500) -- (0.500,1.000) -- (1.000,1.500) -- (1.500,2.000) -- (2.000,2.500) -- (2.500,3.000) -- (3.000,2.500) -- (3.500,2.000) -- (4.000,2.500) -- (3.500,3.000) -- (3.000,2.500) -- (2.500,2.000) -- (2.000,1.500) -- (1.500,1.000) -- (1.000,1.500) -- (0.500,2.000) -- (0.000,2.500) -- (0.500,3.000) -- (1.000,2.500) -- (1.500,2.000) -- (2.000,1.500) -- (2.500,1.000) -- (3.000,1.500) -- (2.500,2.000) -- (2.000,2.500) -- (1.500,3.000) -- (1.000,2.500) -- (0.500,2.000) -- (0.000,1.500);
\end{tikzpicture}
  \caption{$\lambda=(4,3,1)$, $\sigma=1$}
\end{subfigure}\hfill
\begin{subfigure}[t]{0.23\textwidth}
  \centering
\begin{tikzpicture}[x=0.68cm,y=0.68cm]
\draw[black!75,line width=0.35pt] (0,2) rectangle (1,3);
\draw[black!75,line width=0.35pt] (1,2) rectangle (2,3);
\draw[black!75,line width=0.35pt] (2,2) rectangle (3,3);
\draw[black!75,line width=0.35pt] (3,2) rectangle (4,3);
\draw[black!75,line width=0.35pt] (4,2) rectangle (5,3);
\draw[black!75,line width=0.35pt] (0,1) rectangle (1,2);
\draw[black!75,line width=0.35pt] (1,1) rectangle (2,2);
\draw[black!75,line width=0.35pt] (0,0) rectangle (1,1);
\draw[orbitblue,line width=1.4pt,line cap=round,line join=round]
(0.500,3.000) -- (0.000,2.500) -- (0.500,2.000) -- (1.000,1.500) -- (1.500,1.000) -- (2.000,1.500) -- (1.500,2.000) -- (1.000,2.500) -- (0.500,3.000);
\draw[orbitred,line width=1.4pt,line cap=round,line join=round]
(0.000,1.500) -- (0.500,1.000) -- (1.000,0.500) -- (0.500,0.000) -- (0.000,0.500) -- (0.500,1.000) -- (1.000,1.500) -- (1.500,2.000) -- (2.000,2.500) -- (2.500,3.000) -- (3.000,2.500) -- (3.500,2.000) -- (4.000,2.500) -- (4.500,3.000) -- (5.000,2.500) -- (4.500,2.000) -- (4.000,2.500) -- (3.500,3.000) -- (3.000,2.500) -- (2.500,2.000) -- (2.000,2.500) -- (1.500,3.000) -- (1.000,2.500) -- (0.500,2.000) -- (0.000,1.500);
\end{tikzpicture}
  \caption{$\lambda=(5,2,1)$, $\sigma=2$}
\end{subfigure}\hfill
\begin{subfigure}[t]{0.23\textwidth}
  \centering
\begin{tikzpicture}[x=0.68cm,y=0.68cm]
\draw[black!75,line width=0.35pt] (0,2) rectangle (1,3);
\draw[black!75,line width=0.35pt] (1,2) rectangle (2,3);
\draw[black!75,line width=0.35pt] (2,2) rectangle (3,3);
\draw[black!75,line width=0.35pt] (3,2) rectangle (4,3);
\draw[black!75,line width=0.35pt] (0,1) rectangle (1,2);
\draw[black!75,line width=0.35pt] (1,1) rectangle (2,2);
\draw[black!75,line width=0.35pt] (2,1) rectangle (3,2);
\draw[black!75,line width=0.35pt] (0,0) rectangle (1,1);
\draw[black!75,line width=0.35pt] (1,0) rectangle (2,1);
\draw[black!75,line width=0.35pt] (2,0) rectangle (3,1);
\draw[orbitblue,line width=1.4pt,line cap=round,line join=round]
(0.500,3.000) -- (0.000,2.500) -- (0.500,2.000) -- (1.000,1.500) -- (1.500,1.000) -- (2.000,0.500) -- (2.500,0.000) -- (3.000,0.500) -- (2.500,1.000) -- (2.000,1.500) -- (1.500,2.000) -- (1.000,2.500) -- (0.500,3.000);
\draw[orbitred,line width=1.4pt,line cap=round,line join=round]
(0.000,1.500) -- (0.500,1.000) -- (1.000,0.500) -- (1.500,0.000) -- (2.000,0.500) -- (2.500,1.000) -- (3.000,1.500) -- (2.500,2.000) -- (2.000,2.500) -- (1.500,3.000) -- (1.000,2.500) -- (0.500,2.000) -- (0.000,1.500);
\draw[orbitgreen,line width=1.4pt,line cap=round,line join=round]
(0.000,0.500) -- (0.500,0.000) -- (1.000,0.500) -- (1.500,1.000) -- (2.000,1.500) -- (2.500,2.000) -- (3.000,2.500) -- (3.500,3.000) -- (4.000,2.500) -- (3.500,2.000) -- (3.000,2.500) -- (2.500,3.000) -- (2.000,2.500) -- (1.500,2.000) -- (1.000,1.500) -- (0.500,1.000) -- (0.000,0.500);
\end{tikzpicture}
  \caption{$\lambda=(4,3,3)$, $\sigma=3$}
\end{subfigure}\hfill
\begin{subfigure}[t]{0.23\textwidth}
  \centering
\begin{tikzpicture}[x=0.68cm,y=0.68cm]
\draw[black!75,line width=0.35pt] (0,3) rectangle (1,4);
\draw[black!75,line width=0.35pt] (1,3) rectangle (2,4);
\draw[black!75,line width=0.35pt] (2,3) rectangle (3,4);
\draw[black!75,line width=0.35pt] (3,3) rectangle (4,4);
\draw[black!75,line width=0.35pt] (4,3) rectangle (5,4);
\draw[black!75,line width=0.35pt] (0,2) rectangle (1,3);
\draw[black!75,line width=0.35pt] (1,2) rectangle (2,3);
\draw[black!75,line width=0.35pt] (2,2) rectangle (3,3);
\draw[black!75,line width=0.35pt] (3,2) rectangle (4,3);
\draw[black!75,line width=0.35pt] (0,1) rectangle (1,2);
\draw[black!75,line width=0.35pt] (1,1) rectangle (2,2);
\draw[black!75,line width=0.35pt] (2,1) rectangle (3,2);
\draw[black!75,line width=0.35pt] (3,1) rectangle (4,2);
\draw[black!75,line width=0.35pt] (0,0) rectangle (1,1);
\draw[black!75,line width=0.35pt] (1,0) rectangle (2,1);
\draw[black!75,line width=0.35pt] (2,0) rectangle (3,1);
\draw[black!75,line width=0.35pt] (3,0) rectangle (4,1);
\draw[orbitblue,line width=1.4pt,line cap=round,line join=round]
(0.500,4.000) -- (0.000,3.500) -- (0.500,3.000) -- (1.000,2.500) -- (1.500,2.000) -- (2.000,1.500) -- (2.500,1.000) -- (3.000,0.500) -- (3.500,0.000) -- (4.000,0.500) -- (3.500,1.000) -- (3.000,1.500) -- (2.500,2.000) -- (2.000,2.500) -- (1.500,3.000) -- (1.000,3.500) -- (0.500,4.000);
\draw[orbitred,line width=1.4pt,line cap=round,line join=round]
(2.500,0.000) -- (2.000,0.500) -- (1.500,1.000) -- (1.000,1.500) -- (0.500,2.000) -- (0.000,2.500) -- (0.500,3.000) -- (1.000,3.500) -- (1.500,4.000) -- (2.000,3.500) -- (2.500,3.000) -- (3.000,2.500) -- (3.500,2.000) -- (4.000,1.500) -- (3.500,1.000) -- (3.000,0.500) -- (2.500,0.000);
\draw[orbitgreen,line width=1.4pt,line cap=round,line join=round]
(0.000,1.500) -- (0.500,1.000) -- (1.000,0.500) -- (1.500,0.000) -- (2.000,0.500) -- (2.500,1.000) -- (3.000,1.500) -- (3.500,2.000) -- (4.000,2.500) -- (3.500,3.000) -- (3.000,3.500) -- (2.500,4.000) -- (2.000,3.500) -- (1.500,3.000) -- (1.000,2.500) -- (0.500,2.000) -- (0.000,1.500);
\draw[orbitorange,line width=1.4pt,line cap=round,line join=round]
(0.000,0.500) -- (0.500,0.000) -- (1.000,0.500) -- (1.500,1.000) -- (2.000,1.500) -- (2.500,2.000) -- (3.000,2.500) -- (3.500,3.000) -- (4.000,3.500) -- (4.500,4.000) -- (5.000,3.500) -- (4.500,3.000) -- (4.000,3.500) -- (3.500,4.000) -- (3.000,3.500) -- (2.500,3.000) -- (2.000,2.500) -- (1.500,2.000) -- (1.000,1.500) -- (0.500,1.000) -- (0.000,0.500);
\end{tikzpicture}
  \caption{$\lambda=(5,4,4,4)$, $\sigma=4$}
\end{subfigure}
\caption{Billiard trajectories for several partitions $\lambda$.}
\label{fig:orbitexamples}
\end{figure}

For a partition $\lambda$, the conjugate or transpose partition
$\lambda'$ is obtained by reflecting the Young diagram of $\lambda$
across its main diagonal, or equivalently by interchanging rows and
columns. This clearly preserves the number of trajectories:

\begin{lemma}
\label{lem:conjugation-invariance}
Let $\lambda$ be a partition. Then $\sigma(\lambda)=\sigma(\lambda').$
\end{lemma}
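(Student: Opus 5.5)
The plan is to show that the reflection $\rho$ of the plane across the main diagonal of the Young diagram carries the billiard system of $\lambda$ onto the billiard system of $\lambda'$, trajectory by trajectory. Concretely, place the diagram so that box $(i,j)$ occupies $[j-1,j]\times[-i,-i+1]$. We take $\rho$ to be the isometry $(x,y)\mapsto(-y,-x)$, which sends box $(i,j)$ to the square $[i-1,i]\times[-j,-j+1]$, namely box $(j,i)$. Since $(i,j)\in[\lambda]$ exactly when $(j,i)\in[\lambda']$, $\rho$ maps the region $[\lambda]$ onto $[\lambda']$.

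\textbf{Step 1 (combinatorial structure is preserved).} Because $\rho$ is a bijection between the box sets, it maps unit edges to unit edges. An edge is shared by two boxes of $[\lambda]$ exactly when its image is shared by the two image boxes in $[\lambda']$, and likewise for exterior edges. In particular, $\rho$ gives a bijection between the exterior edge midpoints of $\lambda$ and those of $\lambda'$.

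\textbf{Step 2 (the dynamics are preserved).} The map $\rho$ is a Euclidean isometry whose linear part swaps the two diagonal directions of slope $\pm1$ (up to sign), so it sends slope-$\pm1$ segments to slope-$\pm1$ segments. Being an isometry, it preserves the law of reflection: a segment reflecting off an exterior edge $e$ maps to a segment reflecting off $\rho(e)$. Straight passage through a shared side is likewise carried to straight passage through the image side. Hence $\rho$ maps each trajectory in $\lambda$ to a trajectory in $\lambda'$. Applying the same argument to $\rho^{-1}=\rho$ and $\lambda''=\lambda$ shows that this map of trajectories is a bijection, so $\sigma(\lambda)=\sigma(\lambda')$.

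There is no real obstacle here. The only point needing care is to verify that the reflection rule is intrinsic to the geometry, so that an isometry preserves it. For diagonal motion, hitting a horizontal or vertical wall simply flips one velocity component, and $\rho$ exchanges the roles of horizontal and vertical walls consistently, so the rule is preserved. Alternatively, once the paper's formula $\sigma(\lambda)=\operatorname{nullity}_{\mathbb F_2}L(G_\lambda)$ is available, the lemma also follows immediately, since $G_\lambda\cong G_{\lambda'}$ via $(i,j)\mapsto(j,i)$.
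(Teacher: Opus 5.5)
Your proposal is correct and is essentially the paper's argument. The paper just observes that reflecting the diagram across its main diagonal carries trajectories to trajectories, and you have filled in the routine details: coordinates, the correspondence of exterior and shared edges, and preservation of the reflection law. One harmless slip: the map $(x,y)\mapsto(-y,-x)$ fixes the slope $-1$ direction and negates the slope $+1$ direction, so it preserves each diagonal slope class rather than swapping them; what it does swap is horizontal and vertical walls.
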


We first identify the billiard curves with the medial link of the cell-adjacency graph and express their number through the bicycle space and mod-$2$ Laplacian. We then reduce the computation to the adjacency matrix of the truncated diagram $\lambda^\square$. Using this formula and properties of Fibonacci polynomials, we can rederive the gcd formula for rectangular shapes. We also characterize one-orbit partitions by the parity of domino tilings. Finally, we develop ``checkerboard bounds", prove a Durfee-square upper bound, determine the orbit count for staircase partitions, establish an integral refinement, and give some initial enumeration results.

\section{Graph-theoretic description of $\sigma(\lambda)$}
\label{sec:graph-theory}
In this section we show that determining $\sigma(\lambda)$ can be reduced, via graph theory, to a linear algebra question over the binary field $\F=\{0,1\}$. All matrices for the remainder of the paper are assumed to be over $\F$ unless stated otherwise.

\subsection{The cell-adjacency graph}

\begin{definition}
\label{def:cellgraph}
The \emph{cell-adjacency graph} $G_\lambda$ has a vertex for each box in $[\lambda]$.
Two boxes are adjacent when they share a side:
\[
(i,j)\sim(k,l)
\quad\Longleftrightarrow\quad
|i-k|+|j-l|=1.
\]
We assume the planar embedding obtained by placing each vertex at the
center of its box and drawing each edge as a horizontal or vertical line segment crossing the
shared side.
\end{definition}
\begin{remark}
\label{remark:medialdefin}
Note that the medial graph we define below depends not just on the isomorphism type of a planar graph, but on the particular embedding of the planar graph, which is why we specify the particular drawing of $G_\lambda$.
\end{remark}

The graph $G_\lambda$ is connected, planar, bipartite, and has maximum
degree four.
It has
$
|V(G_\lambda)|=|\lambda|.
$
Adding the number of horizontal and vertical edges gives:
\begin{equation}
\label{eq:edgecount}
|E(G_\lambda)|
=
\sum_{i=1}^{\ell}(\lambda_i-1)
+
\sum_{i=2}^{\ell}\lambda_i
=
2|\lambda|-\lambda_1-\ell.
\end{equation}
Since $G_\lambda$ is connected, its cycle-space (defined below in Definition \ref{def:cyclescocyclesbicycles}) dimension is \cite[Theorem 14.2.1]{GodsilRoyle}:
\begin{equation}
\label{eq:cycledim}
|E(G_\lambda)|-|V(G_\lambda)|+1
=
|\lambda|-\lambda_1-\ell+1
=
\sum_{i=2}^{\ell}(\lambda_i-1).
\end{equation}

\subsection{Medial graphs and medial links}

The connection between the billiard curves and graph theory is provided by
the \emph{medial graph}:

\begin{definition} \cite[17.2]{GodsilRoyle}
\label{def:medialgraph}
Let $G$ be a planar graph together with a fixed
embedding in the plane. The \emph{medial graph} $\mathcal M(G)$ has one
vertex $v_e$ for each edge $e$ of $G$. For each face $F$ of $G$, read the edges
$
e_1,e_2,\ldots,e_r
$
in order around the boundary of $F$. For each $i$, join
$v_{e_i}$ to $v_{e_{i+1}}$, with indices taken modulo $r$.

The unbounded face is included (or alternately one can think of $G$ drawn on the surface of a sphere so all faces are bounded). If $G$ has a face with an edge $e$ adjacent to a vertex of degree one (a leaf), the face containing that edge is viewed as having two consecutive occurrences of $e$ and hence gives a loop in $\mathcal{M}(G)$. Similarly a loop $e$ in $G$ bounds a face and contributes a loop from $v_e$ to $v_e$ in $\mathcal{M}(G)$.
\end{definition}

The medial graph is $4$-regular, where loops and multiple edges are allowed.
As noted in Remark \ref{remark:medialdefin}, $\mathcal M(G)$ may depend on the chosen plane
embedding of $G$, not merely on the abstract isomorphism type of $G$.

Figure~\ref{fig:graph-and-medial} gives an example of a plane graph $G$
with $13$ vertices and $18$ edges and its medial graph $\mathcal M(G)$.
The twelve curved  edges around the outside of $\mathcal{M}(G)$ arise from the unbounded face, with the leaf traversed twice, producing the loop in the upper right of the drawing.

\begin{figure}[ht]
\centering
\begin{tikzpicture}[
    scale=1.05,
    graphv/.style={circle,fill=black,inner sep=1.5pt},
    medv/.style={circle,fill=black,inner sep=1.2pt},
    graph/.style={line width=0.9pt},
    med/.style={blue!70!black, line width=1pt},
    outermed/.style={blue!70!black, line width=1pt}
]

\begin{scope}[xshift=0cm]
  \coordinate (a1) at (0,3);
  \coordinate (a2) at (1,3);
  \coordinate (a3) at (2,3);
  \coordinate (a4) at (3,3);

  \coordinate (b1) at (0,2);
  \coordinate (b2) at (1,2);
  \coordinate (b3) at (2,2);

  \coordinate (c1) at (0,1);
  \coordinate (c2) at (1,1);
  \coordinate (c3) at (2,1);

  \coordinate (d1) at (0,0);
  \coordinate (d2) at (1,0);
  \coordinate (d3) at (2,0);

  \draw[graph] (a1)--(a2)--(a3)--(a4);
  \draw[graph] (b1)--(b2)--(b3);
  \draw[graph] (c1)--(c2)--(c3);
  \draw[graph] (d1)--(d2)--(d3);

  \draw[graph] (a1)--(b1)--(c1)--(d1);
  \draw[graph] (a2)--(b2)--(c2)--(d2);
  \draw[graph] (a3)--(b3)--(c3)--(d3);

  \foreach \p in {a1,a2,a3,a4,b1,b2,b3,c1,c2,c3,d1,d2,d3}
    \node[graphv] at (\p) {};

  \node at (1.5,-0.9) {\Large $G$};
\end{scope}

\begin{scope}[xshift=7.8cm]
  \coordinate (t12) at (0.5,3);
  \coordinate (t23) at (1.5,3);
  \coordinate (t34) at (2.5,3);

  \coordinate (u12) at (0.5,2);
  \coordinate (u23) at (1.5,2);

  \coordinate (m12) at (0.5,1);
  \coordinate (m23) at (1.5,1);

  \coordinate (b12m) at (0.5,0);
  \coordinate (b23m) at (1.5,0);

  \coordinate (l11) at (0,2.5);
  \coordinate (l22) at (1,2.5);
  \coordinate (l33) at (2,2.5);

  \coordinate (n11) at (0,1.5);
  \coordinate (n22) at (1,1.5);
  \coordinate (n33) at (2,1.5);

  \coordinate (p11) at (0,0.5);
  \coordinate (p22) at (1,0.5);
  \coordinate (p33) at (2,0.5);

  \draw[med] (t12)--(l22)--(u12)--(l11)--cycle;
  \draw[med] (t23)--(l33)--(u23)--(l22)--cycle;
  \draw[med] (u12)--(n22)--(m12)--(n11)--cycle;
  \draw[med] (u23)--(n33)--(m23)--(n22)--cycle;
  \draw[med] (m12)--(p22)--(b12m)--(p11)--cycle;
  \draw[med] (m23)--(p33)--(b23m)--(p22)--cycle;

  \draw[outermed] (l11) to[bend left=35] (t12);
  \draw[outermed] (t12) to[bend left=35] (t23);
  \draw[outermed] (t23) to[bend left=35] (t34);

  \draw[outermed]
    ($(t34)+(0.38,0)$)
    circle [x radius=0.38, y radius=0.62];

  \draw[outermed] (t34) to[bend left=35] (l33);
  \draw[outermed] (l33) to[bend left=20] (n33);
  \draw[outermed] (n33) to[bend left=20] (p33);
  \draw[outermed] (p33) to[bend left=35] (b23m);
  \draw[outermed] (b23m) to[bend left=35] (b12m);
  \draw[outermed] (b12m) to[bend left=35] (p11);
  \draw[outermed] (p11) to[bend left=20] (n11);
  \draw[outermed] (n11) to[bend left=20] (l11);

  \foreach \p in {t12,t23,t34,u12,u23,m12,m23,b12m,b23m,
                  l11,l22,l33,n11,n22,n33,p11,p22,p33}
    \node[medv] at (\p) {};

  \node at (1.0,-0.9) {\Large $\mathcal M(G)$};
\end{scope}

\end{tikzpicture}
\caption{A plane graph $G$ and its medial graph $\mathcal M(G)$.}
\label{fig:graph-and-medial}
\end{figure}

There is an equivalent topological construction of $\mathcal{M}(G)$. One can take the boundary of a small regular neighborhood around $G$ and then pinch each edge in the center to create a vertex of degree 4, see \cite{SilverWilliams}.

Since $\mathcal{M} (G)$ is $4$-regular, we may regard each of its vertices
as a crossing. Resolving each crossing in one of two ways so one arc passes over the other produces a
diagram of a link. Different choices may produce different links, but do not affect the number of link components, which is all we are interested in.
We call the resulting collection of closed curves the \emph{medial link}
of $G$. The key observation is that the billiard curves in the Young diagram of $\lambda$ are precisely the components of the medial link of $G_\lambda$.

\begin{proposition}
\label{prop:medialidentification}
Suppose $|\lambda|>1$. The diagonal billiard curves in $[\lambda]$ are
isotopic, as an immersed collection of closed curves in the plane, to
the medial link of $G_\lambda$. In particular, there is a natural
bijection between billiard orbits and medial-link components.
\end{proposition}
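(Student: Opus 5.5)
The plan is to construct an explicit combinatorial dictionary between the two objects, and then check that the local gluing rules agree.

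\emph{Vertices.} On the billiard side, every trajectory consists of straight diagonal segments. Each segment runs between midpoints of unit sides of boxes. There are two kinds of such midpoints.
\begin{itemize}
\item An \emph{interior} side is shared by two boxes. Its midpoint is where two strands cross transversally.
\item An \emph{exterior} side lies on the boundary. Its midpoint is where a strand reflects.
\end{itemize}
On the medial side, the vertex $v_e$ of $\mathcal M(G_\lambda)$ sits at the midpoint of the edge $e$ of $G_\lambda$. By Definition~\ref{def:cellgraph}, that edge crosses exactly one interior side, at that side's midpoint. So medial vertices correspond bijectively to interior-side midpoints, and these are precisely the billiard crossing points.

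\emph{Edges.} Next I match the medial edges with maximal billiard arcs between consecutive crossing points. There are two cases.
\begin{itemize}
\item \emph{Bounded faces.} Each bounded face of $G_\lambda$ is a unit square whose corners are the centers of four boxes in a $2\times 2$ block. Its four medial edges form a small diamond around the common lattice point. These are exactly the four diagonal billiard segments through the interiors of those four boxes that join consecutive interior-side midpoints around that lattice point.
\item \emph{The unbounded face.} Walk along the boundary of the unbounded face. Consecutive edges $e_i,e_{i+1}$ correspond to a billiard path that leaves $v_{e_i}$ diagonally, possibly reflects once or more off exterior sides of boundary boxes, and arrives at $v_{e_{i+1}}$. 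I would verify this by a finite case analysis of how a boundary box looks locally: a convex corner, a straight side, a concave (inner) corner, and a degree-one box (leaf) with three exterior sides. The leaf case should produce exactly the loop described in Definition~\ref{def:medialgraph}. The hypothesis $|\lambda|>1$ rules out the degenerate single box, where $G_\lambda$ has no edges.
\end{itemize}

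\emph{Isotopy.} Once the cellular correspondence is set up, I would isotope each billiard arc rel its endpoints onto the corresponding medial edge drawn inside the same region. Bounded-face arcs stay inside their diamond. Outer arcs stay inside a thin collar between $G_\lambda$ and the boundary of $[\lambda]$; reflections become smooth bends after rounding, as in the sona picture. It remains to check that the crossing structure at each $v_e$ matches. At an interior midpoint the two billiard strands pass straight through. This pairs each incoming medial edge with the opposite one in the cyclic order around $v_e$, which is exactly the straight-through rule used to form medial-link components. The bijection of components follows.

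The main obstacle is the unbounded-face case analysis. The worry is that a billiard path between consecutive outer medial vertices might reflect several times, for example around a leaf or along a row or column of width one. I would handle this by noting that every reflection happens inside a box whose center is a vertex of $G_\lambda$ on the outer face. I would then show that the path circles that vertex in the same rotational order as the face-boundary walk. Concretely, inside box $b$, the billiard segments join midpoints of adjacent sides of $b$. So the path turns around the center of $b$ from one incident edge (or exterior side) to the next in cyclic order, skipping over exterior sides. This is precisely the next edge in the face traversal.
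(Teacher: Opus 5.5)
Your proposal is correct and follows essentially the same route as the paper. Both arguments identify interior-side midpoints with medial vertices, observe that inside each cell the rounded strands join consecutive interior-side midpoints in cyclic order around the cell's center (your final paragraph), isotope within cells and a boundary collar, and check that straight-through passage at shared sides is the opposite-half-edge pairing at each medial crossing. The only difference is organizational: you first sort medial edges by face (bounded diamonds versus the unbounded face) before invoking the cell-local argument, whereas the paper works cell by cell from the start.
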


\begin{proof}
Figure~\ref{fig:globalmedial} illustrates the entire construction for
$\lambda=(5,4,4,4)$, the map we describe applies equally well
to any Young diagram.

Begin with the billiard curves. At each reflection off the exterior
boundary, round the corner and suppress the reflection point, as in the
passage from Figure~\ref{fig:globalmedial}(a) to (b). This preserves the number of
components.

Now examine the rounded picture locally near each cell in the Young diagram.  The center of
a cell is a vertex $v$ of $G_\lambda$, and the edges incident with
$v$ cross precisely those sides of the cell that are shared with
neighboring cells.  Inside the cell, the rounded billiard strands join
the corresponding edge-midpoints in consecutive pairs in the cyclic
order around $v$.  These are exactly the local medial edges drawn
around the vertex $v$.

These local deformations may be performed simultaneously in the
interiors of the cells, while fixing the midpoints of the shared sides.
The rounded arcs running along the exterior boundary are then deformed
in a collar of that boundary onto the medial edges belonging to the
unbounded face.  The resulting collection of curves is the
medial graph $\mathcal M(G_\lambda)$, as shown in
Figure~\ref{fig:globalmedial}(c).

Finally, a side shared by two boxes corresponds to an edge $e$ of
$G_\lambda$, and its midpoint is the corresponding vertex of the medial
graph. The billiard trajectory passes straight through the shared side,
so the four incident half-arcs are paired as opposite half-edges. This
is precisely the pairing used when the $4$-valent medial vertex is
regarded as a link crossing.

Therefore the deformation shown in Figure~\ref{fig:globalmedial}
carries the billiard orbits bijectively to the medial-link components, as desired.

\end{proof}

\begin{figure}[ht]
\centering
\begin{subfigure}[t]{0.29\textwidth}
\centering
\begin{tikzpicture}[x=.72cm,y=.72cm]
\draw[black!75,line width=0.35pt] (0,3) rectangle (1,4);
\draw[black!75,line width=0.35pt] (1,3) rectangle (2,4);
\draw[black!75,line width=0.35pt] (2,3) rectangle (3,4);
\draw[black!75,line width=0.35pt] (3,3) rectangle (4,4);
\draw[black!75,line width=0.35pt] (4,3) rectangle (5,4);
\draw[black!75,line width=0.35pt] (0,2) rectangle (1,3);
\draw[black!75,line width=0.35pt] (1,2) rectangle (2,3);
\draw[black!75,line width=0.35pt] (2,2) rectangle (3,3);
\draw[black!75,line width=0.35pt] (3,2) rectangle (4,3);
\draw[black!75,line width=0.35pt] (0,1) rectangle (1,2);
\draw[black!75,line width=0.35pt] (1,1) rectangle (2,2);
\draw[black!75,line width=0.35pt] (2,1) rectangle (3,2);
\draw[black!75,line width=0.35pt] (3,1) rectangle (4,2);
\draw[black!75,line width=0.35pt] (0,0) rectangle (1,1);
\draw[black!75,line width=0.35pt] (1,0) rectangle (2,1);
\draw[black!75,line width=0.35pt] (2,0) rectangle (3,1);
\draw[black!75,line width=0.35pt] (3,0) rectangle (4,1);

\draw[orbitblue,line width=1.55pt,line cap=round,line join=round]
(0.500,4.000) -- (1.000,3.500) -- (1.500,3.000) --
(2.000,2.500) -- (2.500,2.000) -- (3.000,1.500) --
(3.500,1.000) -- (4.000,0.500) -- (3.500,0.000) --
(3.000,0.500) -- (2.500,1.000) -- (2.000,1.500) --
(1.500,2.000) -- (1.000,2.500) -- (0.500,3.000) --
(0.000,3.500) -- (0.500,4.000);

\draw[orbitred,line width=1.55pt,line cap=round,line join=round]
(1.000,3.500) -- (0.500,3.000) -- (0.000,2.500) --
(0.500,2.000) -- (1.000,1.500) -- (1.500,1.000) --
(2.000,0.500) -- (2.500,0.000) -- (3.000,0.500) --
(3.500,1.000) -- (4.000,1.500) -- (3.500,2.000) --
(3.000,2.500) -- (2.500,3.000) -- (2.000,3.500) --
(1.500,4.000) -- (1.000,3.500);

\draw[orbitgreen,line width=1.55pt,line cap=round,line join=round]
(2.000,3.500) -- (1.500,3.000) -- (1.000,2.500) --
(0.500,2.000) -- (0.000,1.500) -- (0.500,1.000) --
(1.000,0.500) -- (1.500,0.000) -- (2.000,0.500) --
(2.500,1.000) -- (3.000,1.500) -- (3.500,2.000) --
(4.000,2.500) -- (3.500,3.000) -- (3.000,3.500) --
(2.500,4.000) -- (2.000,3.500);

\draw[orbitorange,line width=1.55pt,line cap=round,line join=round]
(3.000,3.500) -- (2.500,3.000) -- (2.000,2.500) --
(1.500,2.000) -- (1.000,1.500) -- (0.500,1.000) --
(0.000,0.500) -- (0.500,0.000) -- (1.000,0.500) --
(1.500,1.000) -- (2.000,1.500) -- (2.500,2.000) --
(3.000,2.500) -- (3.500,3.000) -- (4.000,3.500) --
(4.500,4.000) -- (5.000,3.500) -- (4.500,3.000) --
(4.000,3.500) -- (3.500,4.000) -- (3.000,3.500);
\end{tikzpicture}
\caption{The original billiard orbits.}
\end{subfigure}%
\hfill
\raisebox{1.65cm}{$\Longrightarrow$}%
\hfill
\begin{subfigure}[t]{0.29\textwidth}
\centering
\begin{tikzpicture}[x=.72cm,y=.72cm]
\draw[black!75,line width=0.35pt] (0,3) rectangle (1,4);
\draw[black!75,line width=0.35pt] (1,3) rectangle (2,4);
\draw[black!75,line width=0.35pt] (2,3) rectangle (3,4);
\draw[black!75,line width=0.35pt] (3,3) rectangle (4,4);
\draw[black!75,line width=0.35pt] (4,3) rectangle (5,4);
\draw[black!75,line width=0.35pt] (0,2) rectangle (1,3);
\draw[black!75,line width=0.35pt] (1,2) rectangle (2,3);
\draw[black!75,line width=0.35pt] (2,2) rectangle (3,3);
\draw[black!75,line width=0.35pt] (3,2) rectangle (4,3);
\draw[black!75,line width=0.35pt] (0,1) rectangle (1,2);
\draw[black!75,line width=0.35pt] (1,1) rectangle (2,2);
\draw[black!75,line width=0.35pt] (2,1) rectangle (3,2);
\draw[black!75,line width=0.35pt] (3,1) rectangle (4,2);
\draw[black!75,line width=0.35pt] (0,0) rectangle (1,1);
\draw[black!75,line width=0.35pt] (1,0) rectangle (2,1);
\draw[black!75,line width=0.35pt] (2,0) rectangle (3,1);
\draw[black!75,line width=0.35pt] (3,0) rectangle (4,1);


\draw[orbitblue,line width=1.55pt,line cap=round,line join=round,
rounded corners=5.5pt]
(0.750,3.750) --
(1.000,3.500) -- (1.500,3.000) --
(2.000,2.500) -- (2.500,2.000) -- (3.000,1.500) --
(3.500,1.000) -- (4.000,0.500) -- (3.500,0.000) --
(3.000,0.500) -- (2.500,1.000) -- (2.000,1.500) --
(1.500,2.000) -- (1.000,2.500) -- (0.500,3.000) --
(0.000,3.500) -- (0.500,4.000) --
(0.750,3.750);

\draw[orbitred,line width=1.55pt,line cap=round,line join=round,
rounded corners=5.5pt]
(0.750,3.250) --
(0.500,3.000) -- (0.000,2.500) --
(0.500,2.000) -- (1.000,1.500) -- (1.500,1.000) --
(2.000,0.500) -- (2.500,0.000) -- (3.000,0.500) --
(3.500,1.000) -- (4.000,1.500) -- (3.500,2.000) --
(3.000,2.500) -- (2.500,3.000) -- (2.000,3.500) --
(1.500,4.000) -- (1.000,3.500) --
(0.750,3.250);

\draw[orbitgreen,line width=1.55pt,line cap=round,line join=round,
rounded corners=5.5pt]
(1.750,3.250) --
(1.500,3.000) -- (1.000,2.500) --
(0.500,2.000) -- (0.000,1.500) -- (0.500,1.000) --
(1.000,0.500) -- (1.500,0.000) -- (2.000,0.500) --
(2.500,1.000) -- (3.000,1.500) -- (3.500,2.000) --
(4.000,2.500) -- (3.500,3.000) -- (3.000,3.500) --
(2.500,4.000) -- (2.000,3.500) --
(1.750,3.250);

\draw[orbitorange,line width=1.55pt,line cap=round,line join=round,
rounded corners=5.5pt]
(2.750,3.250) --
(2.500,3.000) -- (2.000,2.500) --
(1.500,2.000) -- (1.000,1.500) -- (0.500,1.000) --
(0.000,0.500) -- (0.500,0.000) -- (1.000,0.500) --
(1.500,1.000) -- (2.000,1.500) -- (2.500,2.000) --
(3.000,2.500) -- (3.500,3.000) -- (4.000,3.500) --
(4.500,4.000) -- (5.000,3.500) -- (4.500,3.000) --
(4.000,3.500) -- (3.500,4.000) -- (3.000,3.500) --
(2.750,3.250);
\end{tikzpicture}
\caption{Round each boundary ricochet.}
\end{subfigure}%
\hfill
\raisebox{1.65cm}{$\Longrightarrow$}%
\hfill
\begin{subfigure}[t]{0.29\textwidth}
\centering
\begin{tikzpicture}[x=.72cm,y=.72cm]

\draw[gray!65,line width=0.65pt] (0.500,3.500) -- (1.500,3.500);
\draw[gray!65,line width=0.65pt] (0.500,3.500) -- (0.500,2.500);
\draw[gray!65,line width=0.65pt] (1.500,3.500) -- (2.500,3.500);
\draw[gray!65,line width=0.65pt] (1.500,3.500) -- (1.500,2.500);
\draw[gray!65,line width=0.65pt] (2.500,3.500) -- (3.500,3.500);
\draw[gray!65,line width=0.65pt] (2.500,3.500) -- (2.500,2.500);
\draw[gray!65,line width=0.65pt] (3.500,3.500) -- (4.500,3.500);
\draw[gray!65,line width=0.65pt] (3.500,3.500) -- (3.500,2.500);

\draw[gray!65,line width=0.65pt] (0.500,2.500) -- (1.500,2.500);
\draw[gray!65,line width=0.65pt] (0.500,2.500) -- (0.500,1.500);
\draw[gray!65,line width=0.65pt] (1.500,2.500) -- (2.500,2.500);
\draw[gray!65,line width=0.65pt] (1.500,2.500) -- (1.500,1.500);
\draw[gray!65,line width=0.65pt] (2.500,2.500) -- (3.500,2.500);
\draw[gray!65,line width=0.65pt] (2.500,2.500) -- (2.500,1.500);
\draw[gray!65,line width=0.65pt] (3.500,2.500) -- (3.500,1.500);

\draw[gray!65,line width=0.65pt] (0.500,1.500) -- (1.500,1.500);
\draw[gray!65,line width=0.65pt] (0.500,1.500) -- (0.500,0.500);
\draw[gray!65,line width=0.65pt] (1.500,1.500) -- (2.500,1.500);
\draw[gray!65,line width=0.65pt] (1.500,1.500) -- (1.500,0.500);
\draw[gray!65,line width=0.65pt] (2.500,1.500) -- (3.500,1.500);
\draw[gray!65,line width=0.65pt] (2.500,1.500) -- (2.500,0.500);
\draw[gray!65,line width=0.65pt] (3.500,1.500) -- (3.500,0.500);

\draw[gray!65,line width=0.65pt] (0.500,0.500) -- (1.500,0.500);
\draw[gray!65,line width=0.65pt] (1.500,0.500) -- (2.500,0.500);
\draw[gray!65,line width=0.65pt] (2.500,0.500) -- (3.500,0.500);

\foreach \p in {
(0.500,3.500),(1.500,3.500),(2.500,3.500),(3.500,3.500),(4.500,3.500),
(0.500,2.500),(1.500,2.500),(2.500,2.500),(3.500,2.500),
(0.500,1.500),(1.500,1.500),(2.500,1.500),(3.500,1.500),
(0.500,0.500),(1.500,0.500),(2.500,0.500),(3.500,0.500)}
\fill[gray!70] \p circle (1.4pt);

\draw[orbitred,line width=1.30pt,line cap=round]
plot[smooth,tension=0.75] coordinates
{(1.000,3.500) (0.760,3.240) (0.500,3.000)};
\draw[orbitblue,line width=1.30pt,line cap=round]
plot[smooth,tension=0.75] coordinates
{(0.500,3.000) (0.240,3.240) (0.240,3.760)
 (0.760,3.760) (1.000,3.500)};

\draw[orbitgreen,line width=1.30pt,line cap=round]
plot[smooth,tension=0.75] coordinates
{(2.000,3.500) (1.760,3.240) (1.500,3.000)};
\draw[orbitblue,line width=1.30pt,line cap=round]
plot[smooth,tension=0.75] coordinates
{(1.500,3.000) (1.240,3.240) (1.000,3.500)};
\draw[orbitred,line width=1.30pt,line cap=round]
plot[smooth,tension=0.75] coordinates
{(1.000,3.500) (1.240,3.760) (1.760,3.760)
 (2.000,3.500)};

\draw[orbitorange,line width=1.30pt,line cap=round]
plot[smooth,tension=0.75] coordinates
{(3.000,3.500) (2.760,3.240) (2.500,3.000)};
\draw[orbitred,line width=1.30pt,line cap=round]
plot[smooth,tension=0.75] coordinates
{(2.500,3.000) (2.240,3.240) (2.000,3.500)};
\draw[orbitgreen,line width=1.30pt,line cap=round]
plot[smooth,tension=0.75] coordinates
{(2.000,3.500) (2.240,3.760) (2.760,3.760)
 (3.000,3.500)};

\draw[orbitorange,line width=1.30pt,line cap=round]
plot[smooth,tension=0.75] coordinates
{(4.000,3.500) (3.760,3.240) (3.500,3.000)};
\draw[orbitgreen,line width=1.30pt,line cap=round]
plot[smooth,tension=0.75] coordinates
{(3.500,3.000) (3.240,3.240) (3.000,3.500)};
\draw[orbitorange,line width=1.30pt,line cap=round]
plot[smooth,tension=0.75] coordinates
{(3.000,3.500) (3.240,3.760) (3.760,3.760)
 (4.000,3.500)};
\draw[orbitorange,line width=1.30pt,line cap=round]
plot[smooth cycle,tension=0.75] coordinates
{(4.000,3.500) (4.240,3.760) (4.760,3.760)
 (4.760,3.240) (4.240,3.240)};

\draw[orbitblue,line width=1.30pt,line cap=round]
plot[smooth,tension=0.75] coordinates
{(0.500,3.000) (0.760,2.760) (1.000,2.500)};
\draw[orbitgreen,line width=1.30pt,line cap=round]
plot[smooth,tension=0.75] coordinates
{(1.000,2.500) (0.760,2.240) (0.500,2.000)};
\draw[orbitred,line width=1.30pt,line cap=round]
plot[smooth,tension=0.75] coordinates
{(0.500,2.000) (0.240,2.240) (0.240,2.760)
 (0.500,3.000)};

\draw[orbitblue,line width=1.30pt,line cap=round]
plot[smooth,tension=0.75] coordinates
{(1.500,3.000) (1.760,2.760) (2.000,2.500)};
\draw[orbitorange,line width=1.30pt,line cap=round]
plot[smooth,tension=0.75] coordinates
{(2.000,2.500) (1.760,2.240) (1.500,2.000)};
\draw[orbitblue,line width=1.30pt,line cap=round]
plot[smooth,tension=0.75] coordinates
{(1.500,2.000) (1.240,2.240) (1.000,2.500)};
\draw[orbitgreen,line width=1.30pt,line cap=round]
plot[smooth,tension=0.75] coordinates
{(1.000,2.500) (1.240,2.760) (1.500,3.000)};

\draw[orbitred,line width=1.30pt,line cap=round]
plot[smooth,tension=0.75] coordinates
{(2.500,3.000) (2.760,2.760) (3.000,2.500)};
\draw[orbitorange,line width=1.30pt,line cap=round]
plot[smooth,tension=0.75] coordinates
{(3.000,2.500) (2.760,2.240) (2.500,2.000)};
\draw[orbitblue,line width=1.30pt,line cap=round]
plot[smooth,tension=0.75] coordinates
{(2.500,2.000) (2.240,2.240) (2.000,2.500)};
\draw[orbitorange,line width=1.30pt,line cap=round]
plot[smooth,tension=0.75] coordinates
{(2.000,2.500) (2.240,2.760) (2.500,3.000)};

\draw[orbitgreen,line width=1.30pt,line cap=round]
plot[smooth,tension=0.75] coordinates
{(3.500,3.000) (3.760,2.760) (3.760,2.240)
 (3.500,2.000)};
\draw[orbitred,line width=1.30pt,line cap=round]
plot[smooth,tension=0.75] coordinates
{(3.500,2.000) (3.240,2.240) (3.000,2.500)};
\draw[orbitorange,line width=1.30pt,line cap=round]
plot[smooth,tension=0.75] coordinates
{(3.000,2.500) (3.240,2.760) (3.500,3.000)};

\draw[orbitred,line width=1.30pt,line cap=round]
plot[smooth,tension=0.75] coordinates
{(0.500,2.000) (0.760,1.760) (1.000,1.500)};
\draw[orbitorange,line width=1.30pt,line cap=round]
plot[smooth,tension=0.75] coordinates
{(1.000,1.500) (0.760,1.240) (0.500,1.000)};
\draw[orbitgreen,line width=1.30pt,line cap=round]
plot[smooth,tension=0.75] coordinates
{(0.500,1.000) (0.240,1.240) (0.240,1.760)
 (0.500,2.000)};

\draw[orbitblue,line width=1.30pt,line cap=round]
plot[smooth,tension=0.75] coordinates
{(1.500,2.000) (1.760,1.760) (2.000,1.500)};
\draw[orbitorange,line width=1.30pt,line cap=round]
plot[smooth,tension=0.75] coordinates
{(2.000,1.500) (1.760,1.240) (1.500,1.000)};
\draw[orbitred,line width=1.30pt,line cap=round]
plot[smooth,tension=0.75] coordinates
{(1.500,1.000) (1.240,1.240) (1.000,1.500)};
\draw[orbitorange,line width=1.30pt,line cap=round]
plot[smooth,tension=0.75] coordinates
{(1.000,1.500) (1.240,1.760) (1.500,2.000)};

\draw[orbitblue,line width=1.30pt,line cap=round]
plot[smooth,tension=0.75] coordinates
{(2.500,2.000) (2.760,1.760) (3.000,1.500)};
\draw[orbitgreen,line width=1.30pt,line cap=round]
plot[smooth,tension=0.75] coordinates
{(3.000,1.500) (2.760,1.240) (2.500,1.000)};
\draw[orbitblue,line width=1.30pt,line cap=round]
plot[smooth,tension=0.75] coordinates
{(2.500,1.000) (2.240,1.240) (2.000,1.500)};
\draw[orbitorange,line width=1.30pt,line cap=round]
plot[smooth,tension=0.75] coordinates
{(2.000,1.500) (2.240,1.760) (2.500,2.000)};

\draw[orbitred,line width=1.30pt,line cap=round]
plot[smooth,tension=0.75] coordinates
{(3.500,2.000) (3.760,1.760) (3.760,1.240)
 (3.500,1.000)};
\draw[orbitblue,line width=1.30pt,line cap=round]
plot[smooth,tension=0.75] coordinates
{(3.500,1.000) (3.240,1.240) (3.000,1.500)};
\draw[orbitgreen,line width=1.30pt,line cap=round]
plot[smooth,tension=0.75] coordinates
{(3.000,1.500) (3.240,1.760) (3.500,2.000)};

\draw[orbitgreen,line width=1.30pt,line cap=round]
plot[smooth,tension=0.75] coordinates
{(0.500,1.000) (0.760,0.760) (1.000,0.500)};
\draw[orbitorange,line width=1.30pt,line cap=round]
plot[smooth,tension=0.75] coordinates
{(1.000,0.500) (0.760,0.240) (0.240,0.240)
 (0.240,0.760) (0.500,1.000)};

\draw[orbitred,line width=1.30pt,line cap=round]
plot[smooth,tension=0.75] coordinates
{(1.500,1.000) (1.760,0.760) (2.000,0.500)};
\draw[orbitgreen,line width=1.30pt,line cap=round]
plot[smooth,tension=0.75] coordinates
{(2.000,0.500) (1.760,0.240) (1.240,0.240)
 (1.000,0.500)};
\draw[orbitorange,line width=1.30pt,line cap=round]
plot[smooth,tension=0.75] coordinates
{(1.000,0.500) (1.240,0.760) (1.500,1.000)};

\draw[orbitblue,line width=1.30pt,line cap=round]
plot[smooth,tension=0.75] coordinates
{(2.500,1.000) (2.760,0.760) (3.000,0.500)};
\draw[orbitred,line width=1.30pt,line cap=round]
plot[smooth,tension=0.75] coordinates
{(3.000,0.500) (2.760,0.240) (2.240,0.240)
 (2.000,0.500)};
\draw[orbitgreen,line width=1.30pt,line cap=round]
plot[smooth,tension=0.75] coordinates
{(2.000,0.500) (2.240,0.760) (2.500,1.000)};

\draw[orbitblue,line width=1.30pt,line cap=round]
plot[smooth,tension=0.75] coordinates
{(3.500,1.000) (3.760,0.760) (3.760,0.240)
 (3.240,0.240) (3.000,0.500)};
\draw[orbitred,line width=1.30pt,line cap=round]
plot[smooth,tension=0.75] coordinates
{(3.000,0.500) (3.240,0.760) (3.500,1.000)};

\end{tikzpicture}
\caption{Intersection points are vertices of the medial graph of $G_\lambda$.}
\end{subfigure}

\caption{The bijection for $\lambda=(5,4,4,4)$.
Panel (a) is the four-orbit billiard picture from
Figure~\ref{fig:orbitexamples}. In (b), segments meeting at reflection points are
rounded into a single smooth arc, so no component is created or
destroyed. The gray vertices and edges in (c) are the
cell-adjacency graph $G_\lambda$. At the midpoint of each gray edge the
colored strands meet in the same opposite-half-edge pairing as the
billiard strands crossing the corresponding shared side in the original Young diagram.}
\label{fig:globalmedial}
\end{figure}

\subsection{Cycle space, cut space, and bicycle space}
Next we transform the problem of counting components of the medial link into linear algebra. We proceed in the standard way, constructing a vector space over the field $\F$ with a basis given by the edges $E(G)$. So let $G=(V,E)$ be a finite graph. Identify a subset of edges with its
characteristic vector in $\F^E$; under this identification, vector
addition corresponds to symmetric difference on the level of subsets. For further background, see
\cite{GodsilRoyle}.

\begin{definition}
\label{def:cyclescocyclesbicycles}
The \emph{cycle space} $\cC(G)$ (sometimes called the \emph{flow space}) is the subspace of $\F^E$ spanned by
the characteristic vectors of the cycles of $G$. Equivalently,
$\cC(G)$ consists precisely of subsets of edges so that
every vertex is incident to an even number of them.

The \emph{cut space}, or \emph{cocycle space}, is the orthogonal complement
\[
\mathcal C^*(G)=\cC(G)^\perp.
\]
Thus a subset of edges lies in the cut space if and only if it meets every
element of the cycle space in an even number of edges.
Equivalently, $\mathcal C^*(G)$ is the subspace of $\F^E$ spanned by
\emph{edge cuts}
\[
\delta(S):=\{uv\in E:u\in S,\ v\notin S\},
\qquad S\subseteq V.
\]
\end{definition}

\begin{definition}
\label{def:bicyclespace}

The \emph{bicycle space} is
\[
\cB(G)=\cC(G)\cap\mathcal C^*(G).
\]
Thus a \emph{bicycle} is an edge set which has even degree at every vertex and
which meets every cycle of $G$ in an even number of edges.
\end{definition}

\begin{example}
\label{ex:k4bicycle}
Consider the complete graph $G=K_4$ on vertices
\[
V(G)=\{v_1,v_2,v_3,v_4\},
\]
and let
\[
B=\{v_1v_2,\ v_2v_3,\ v_3v_4,\ v_4v_1\}.
\]
so $B$ is a four-cycle and thus $B\in\cC(G)$. If we choose
$
S=\{v_1,v_3\},
$
then
$
B=\delta(S)
$
and so $B\in\cC^*(G)$, i.e. $B$ is also an edge cut. Therefore $B\in\cB(G)$ is a nonzero bicycle. Figure \ref{fig:k4bicycle} illustrates this example. One can check that any cycle in $G$ meets $B$ in an even number of edges.
\end{example}

\begin{figure}[ht]
\centering
\begin{tikzpicture}[
    scale=1.25,
    vertex/.style={circle,draw,fill=white,inner sep=2pt},
    setvertex/.style={circle,draw,fill=gray!30,inner sep=2pt}
]

\node[setvertex,label=above left:$v_1$]  (v1) at (0,2) {};
\node[vertex,label=above right:$v_2$]    (v2) at (2,2) {};
\node[setvertex,label=below right:$v_3$] (v3) at (2,0) {};
\node[vertex,label=below left:$v_4$]     (v4) at (0,0) {};

\draw[very thick,red] (v1)--(v2);
\draw[very thick,red] (v2)--(v3);
\draw[very thick,red] (v3)--(v4);
\draw[very thick,red] (v4)--(v1);

\draw[thick] (v1)--(v3);
\draw[thick] (v2)--(v4);

\node[align=center] at (1,-0.75)
{$S=\{v_1,v_3\}$};

\end{tikzpicture}
\caption{A nonzero bicycle in $K_4$. The four highlighted edges form
the cycle $v_1v_2v_3v_4v_1$. They also form the edge cut
$\delta(\{v_1,v_3\})$.}
\label{fig:k4bicycle}
\end{figure}

\subsection{Medial link components and bicycles}

For a component $K$ of the medial link $\mathcal{M}(G)$, define its \emph{residue} $\rho(K)\in\F^{E(G)}$ by declaring that the coefficient of an edge $e\in E(G)$ is the number, modulo two, of times that $K$ passes through the corresponding medial vertex $m_e$. Equivalently, $\rho(K)$ is the subset of edges $e$ for which $K$ passes through $m_e$ an odd number of times. This residue lies in both the cycle space and the cut space of $G$, and hence
$ \rho(K)\in\cB(G).$

The following  theorem translates our orbit count into linear algebra.

\begin{theorem}
\cite[Theorem~17.3.5]{GodsilRoyle}
\label{thm:medialbicycle}
Let $G$ be a connected plane graph whose medial link has $c$ components.
The residues of any choice of $c-1$ components form a basis of
$\cB(G)$. Consequently,
\[
c=1+\dim_{\F}\cB(G).
\]
\end{theorem}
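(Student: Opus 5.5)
We would prove Theorem~\ref{thm:medialbicycle} in two halves. First we show that the residues $\rho(K_1),\dots,\rho(K_c)$ of all components span $\cB(G)$. Then we show that their only linear relation is $\sum_i \rho(K_i)=0$. Together these give $\dim\cB(G)=c-1$, and dropping any one residue leaves a basis.

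The membership $\rho(K)\in\cB(G)$ is asserted in the text; we would verify it with the checkerboard colouring of the faces of $\mathcal M(G)$. Its faces split into vertex-faces, one containing each vertex of $G$, and face-faces, one for each face of $G$. At a vertex $v$, the medial curves circling $v$ pass once through each $m_e$ with $e$ incident to $v$. A single component $K$ is a closed curve, so it crosses the boundary of the region around $v$ an even number of times. Hence $K$ uses an even number of the edges at $v$ (counted mod $2$), and $\rho(K)\in\cC(G)$. Running the same argument around the faces of $G$ gives $\rho(K)\in\cC(G^*)$. Planar duality identifies $\cC(G^*)$ with $\mathcal C^*(G)$, so $\rho(K)\in\cB(G)$.

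For the relation: every medial vertex $m_e$ is visited exactly twice in total, either once each by two components or twice by one. Therefore $\sum_i\rho(K_i)=0$. The substantive claim is that no other relation holds, i.e.\ if $I$ is a proper nonempty set of components then $\sum_{i\in I}\rho(K_i)\neq0$. Suppose this sum were zero. Then every $m_e$ lies on exactly zero or two strands from $I$. Equivalently, at no crossing does a component of $I$ meet a component outside $I$. The union of the components in $I$ would then be disjoint from the rest of the link. But $\mathcal M(G)$ is connected because $G$ is connected, so this is impossible.

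For spanning, we would use the Tutte/Rosenstiehl--Read argument. Take $B\in\cB(G)$. Write $B=\delta(S)$ and also regard $B$ as a cycle, i.e.\ as a set of faces of $G^*$. We then track the medial curves: at each $m_e$ with $e\in B$, a curve passing straight through separates an $S$-vertex region from a non-$S$ region. Following this, we build a $2$-colouring of the components in which $B$ is exactly the set of crossings between differently coloured components. Then $B=\sum_{i\in I}\rho(K_i)$, where $I$ is one colour class, modulo the crossings of a component with itself.

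We expect this last step to be the hard one, because self-crossings contribute $0$ to $\rho$ but need care. Our first attempt would be a dimension count instead: show $\dim\cB(G)\le c-1$ via the known identity relating $\dim\cB$ to the Tutte polynomial evaluation $|T_G(-1,-1)|=2^{\dim\cB}$, together with the Martin/Las~Vergnas formula $T_G(-1,-1)=\pm(-2)^{c-1}$. Combined with the independence shown above, this finishes the proof.
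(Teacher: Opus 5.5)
Your proposal is correct, but it is organized differently from the paper. The paper gives no proof of its own: it cites Godsil--Royle (and Lamey--Silver--Williams for an elementary proof), notes that the residues of all $c$ components sum to zero, and calls the absence of further relations the substantive part.

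Your membership argument works once ``the region around $v$'' means a slight enlargement of the face of $\mathcal M(G)$ containing $v$. A component runs along that face's boundary rather than crossing it. However, a simple closed curve just outside the face meets each strand through each $m_e$, $e\ni v$, exactly once and meets nothing else.

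Your independence argument via connectivity of $\mathcal M(G)$ is correct. It shows that, contrary to the paper's emphasis, ``no further relations'' is the easy half; the real content is that the residues span $\cB(G)$. Your fallback for spanning combines $|T_G(-1,-1)|=2^{\dim\cB(G)}$ (Rosenstiehl--Read) with $|T_G(-1,-1)|=2^{c-1}$ (Martin, Las Vergnas). This is valid, but those two evaluations together already \emph{are} the identity $c=1+\dim\cB(G)$. So on that route the substantive half is imported wholesale, just as in the paper, only via heavier machinery.

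The direct colouring argument you set aside goes through cleanly, and self-crossings cause no trouble. For $B\in\cB(G)$ write $B=\delta(S)$. Since $\cC(G)=\cC^*(G^*)$, also $B=\delta_{G^*}(T)$ for a set $T$ of faces. Each medial arc runs through a corner $(v,F)$ of a face $F$ at a vertex $v$; label it $[v\in S]+[F\in T]\in\F$. At $m_e$, with $e=vw$ and faces $F_1,F_2$ on its two sides, the straight-through pairing joins corners $(v,F_1)\leftrightarrow(w,F_2)$ and $(w,F_1)\leftrightarrow(v,F_2)$. So the label changes by $[e\in\delta(S)]+[e\in\delta_{G^*}(T)]=0$. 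Hence each component $K$ carries a constant label $\theta(K)$, while the two strands at $m_e$ have labels differing by $[v\in S]+[w\in S]=[e\in B]$. Therefore $B=\sum_{\theta(K)=1}\rho(K)$. A component crossing itself at $m_e$ has equal labels there, so $e\notin B$, consistent with its even contribution to $\rho$. Together with your membership and independence arguments, this gives a complete, self-contained proof using only planar duality and the connectivity of $\mathcal M(G)$.
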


An elementary proof in the language
of medial links is given by Lamey--Silver--Williams
\cite[Theorem~4.4]{LameySilverWilliams}. That the $c$
residues are linearly dependent is easy to see: every graph edge is crossed by the medial link
twice in total, so the sum of all residues is zero (working over $\F$). The substantive
assertion is that there are no further linear relations.

Combining Proposition~\ref{prop:medialidentification} and
Theorem~\ref{thm:medialbicycle}, and checking $\lambda=(1)$ directly,
gives the following:

\begin{corollary}
\label{cor:sigmabicycle}
For every nonempty partition $\lambda$,
\[
\sigma(\lambda)=1+\dim_{\F}\cB(G_\lambda).
\]
\end{corollary}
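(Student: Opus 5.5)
The corollary follows by combining two earlier results, after splitting off the degenerate case $|\lambda|=1$, where Proposition~\ref{prop:medialidentification} does not apply.

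\emph{The case $|\lambda|>1$.} Proposition~\ref{prop:medialidentification} gives a bijection between billiard orbits in $[\lambda]$ and components of the medial link of $G_\lambda$. So $\sigma(\lambda)$ equals the number $c$ of medial-link components.

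Next I check the hypotheses of Theorem~\ref{thm:medialbicycle}. The graph $G_\lambda$ is connected, as noted after Definition~\ref{def:cellgraph}. It is a plane graph with the fixed embedding specified there, which is the embedding used to build the medial link. Hence $c=1+\dim_{\F}\cB(G_\lambda)$, and so $\sigma(\lambda)=1+\dim_{\F}\cB(G_\lambda)$.

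\emph{The case $\lambda=(1)$.} Here the diagram is a single unit square. A slope $\pm1$ trajectory starting at the midpoint of one side reflects successively through the midpoints of the other three sides and closes up. It visits all four exterior edge midpoints, so $\sigma((1))=1$.

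On the graph side, $G_{(1)}$ is a single vertex with no edges. Then $\F^{E}=0$, so $\cC$, $\cC^*$ and $\cB$ are all zero, and $1+\dim_{\F}\cB(G_{(1)})=1$. The two sides agree.

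The only point needing care is this degenerate case. The medial graph of an edgeless graph is empty, so Theorem~\ref{thm:medialbicycle} would count $0$ components while there is genuinely one orbit. That mismatch is why this case must be checked directly rather than through the theorem. The rest is a direct combination of the two cited results.
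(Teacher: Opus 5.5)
Your proposal is correct and matches the paper's argument: for $|\lambda|>1$, combine the orbit/medial-link bijection of Proposition~\ref{prop:medialidentification} with Theorem~\ref{thm:medialbicycle} applied to the connected plane graph $G_\lambda$. The case $\lambda=(1)$ is checked directly, where both sides equal $1$. Your explanation of why that degenerate case must be separated is also accurate: the medial link of the edgeless graph $G_{(1)}$ has no components, but there is one orbit.
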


\subsection{The incidence matrix and the mod-$2$ Laplacian}

Corollary \ref{cor:sigmabicycle} has reduced the problem of counting orbits to determining the dimension of the bicycle space $\cB(G_\lambda)$. We now make this even more explicit in terms of a single matrix. The cycle and cut spaces have natural descriptions in terms of the
incidence matrix of $G$.

\begin{definition}
The \emph{incidence matrix} of a finite simple graph $G$ is
the matrix $N=N(G)$
whose rows are indexed by vertices, columns are indexed by edges,
and whose entries are
\[
N_{v,e}
=
\begin{cases}
1,&\text{if $v$ is an endpoint of $e$,}\\
0,&\text{otherwise.}
\end{cases}
\]
\end{definition}

Since $G$ is simple, every column of $N$ contains exactly two nonzero entries. If
a vector $x\in\F^E$ is regarded as a subset of the edges, then the $v$-coordinate of
$Nx$ is the degree of $v$ in the edge set $x$, taken modulo two.
Consequently,
\[
\cC(G)=\ker N.
\]

The row of $N$ corresponding to a vertex $v$ includes all the edges adjacent to $v$, and thus is the characteristic
vector of the cut $\delta(\{v\})$. More generally, summing the rows
indexed by a subset $S\subseteq V$ gives the characteristic vector of
$\delta(S)$, as edges with both endpoints in $S$ occur twice
and hence cancel over $\F$. Therefore
\[
\mathcal C^*(G)
=
\operatorname{row}(N)
=
\operatorname{im}N^{\mathsf T},
\]
and
\[
\cB(G)
=
\ker N\cap\operatorname{row}(N).
\]

Let $A(G)$ denote the adjacency matrix of $G$, and let $D(G)$ be the
diagonal matrix with the degree of the vertices down the diagonal. Over $\F$ the mod-$2$ Laplacian matrix is
\[
L(G)=D(G)+A(G).
\]
Since every column of $N$ records the two endpoints of the corresponding
edge,
\[
NN^{\mathsf T}=L(G).
\]
The dimension of the bicycle space is determined by the nullity of $L(G)$:
\begin{proposition}
\label{prop:lapbicycle}
If $G$ is connected, then
\[
\dim_{\F}\cB(G)=\nul_{\F}L(G)-1.
\]
More precisely, the linear map
\[
\ker L(G)\longrightarrow\cB(G),
\qquad
x\longmapsto N^{\mathsf T}x,
\]
is surjective and has kernel $\langle\mathbf1\rangle$.
\end{proposition}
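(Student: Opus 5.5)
We will study the map $\phi\colon x\mapsto N^{\mathsf T}x$ restricted to $\ker L(G)$. Throughout we use $L(G)=NN^{\mathsf T}$, $\cC(G)=\ker N$ and $\mathcal C^*(G)=\operatorname{im}N^{\mathsf T}$, all established above. The argument has three parts: the map lands in $\cB(G)$, it is onto $\cB(G)$, and its kernel is $\langle\mathbf1\rangle$. Rank--nullity then gives $\dim\cB(G)=\nul L(G)-1$.

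\emph{Well-definedness.} Let $x\in\ker L(G)$. Then $N^{\mathsf T}x\in\operatorname{im}N^{\mathsf T}=\mathcal C^*(G)$. Also $N(N^{\mathsf T}x)=L(G)x=0$, so $N^{\mathsf T}x\in\ker N=\cC(G)$. Hence $N^{\mathsf T}x\in\cB(G)$.

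\emph{Surjectivity.} Take $b\in\cB(G)$. Since $b\in\operatorname{im}N^{\mathsf T}$, we can write $b=N^{\mathsf T}x$ for some $x\in\F^V$ (concretely, $b=\delta(S)$ and $x$ is the indicator of $S$). Since $b\in\ker N$, we get $L(G)x=NN^{\mathsf T}x=Nb=0$. So $x\in\ker L(G)$ and $\phi(x)=b$.

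\emph{Kernel.} Suppose $x\in\ker L(G)$ with $N^{\mathsf T}x=0$. For each edge $e=uv$, the $e$-coordinate of $N^{\mathsf T}x$ is $x_u+x_v$. So $x_u=x_v$ for every edge, and connectivity forces $x$ to be constant, i.e.\ $x\in\{0,\mathbf1\}$. Conversely, $N^{\mathsf T}\mathbf1=0$ because every column of $N$ has exactly two ones, and therefore $L(G)\mathbf1=0$ as well. So $\mathbf1\in\ker L(G)$, and the kernel of the restricted map is exactly $\langle\mathbf1\rangle$.

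No step is a genuine obstacle. The only point needing care is that the restriction to $\ker L(G)$ is used in two places: to place the image inside $\cC(G)$, and to confirm that the preimage chosen for surjectivity actually lies in $\ker L(G)$. Both follow from the factorization $L(G)=NN^{\mathsf T}$.
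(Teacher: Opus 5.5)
Your proof is correct and follows the paper's argument essentially step for step: the image lies in $\cB(G)$ because $N(N^{\mathsf T}x)=L(G)x=0$, surjectivity comes from lifting a cut $b=N^{\mathsf T}x$ and noting $L(G)x=Nb=0$, and connectivity forces the kernel to consist of constant vectors. Your explicit check that $\mathbf1\in\ker L(G)$, which shows the kernel equals $\langle\mathbf1\rangle$ rather than merely being contained in it, is a small point the paper leaves implicit.
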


\begin{proof}
If $x\in\ker L(G)$, then
\[
N(N^{\mathsf T}x)=L(G)x=0,
\]
so $N^{\mathsf T}x \in \ker N=\cC(G)$. The vector $N^{\mathsf T}x$ belongs to the cut
space by construction (it is in the row space of $N$), hence $N^{\mathsf T}x  \in \cB(G)$, and the image of the map lies in $\cB(G)$ as desired.

Now choose $b\in\cB(G)$. Since $b$ is a cut it lies in the row space of $N$ and we can find $x$ so that
$b=N^{\mathsf T}x$. Since it is also a cycle,
\[
0=Nb=NN^{\mathsf T}x=L(G)x.
\]
So $b=N^{\mathsf T}x$ for some $x \in \ker L(G)$. Thus the map is surjective.

The kernel of the map is  $\ker N^{\mathsf T}$ by definition. A vector $x\in\F^V$ lies in this
kernel precisely when its values agree across every edge. Since $G$ is
connected, $x$ is constant. Hence the kernel is spanned by the constant vector $\langle\mathbf1\rangle$.
\end{proof}

Combining Proposition \ref{prop:lapbicycle} with Corollary \ref{cor:sigmabicycle} gives
\begin{equation}
\label{eq:lapformula}
\sigma(\lambda)
=
\nul_{\F}L(G_\lambda).
\end{equation}
This is also a special case of the medial-link nullity theorem of Silver and Williams \cite{SilverWilliams}.

\subsection{The graph of $2\times2$ blocks}

A $2\times2$ block in $[\lambda]$ is determined by its upper-left box
$(i,j)$, where
\[
1\leq i\leq\ell-1,
\qquad
1\leq j\leq\lambda_{i+1}-1.
\]

\begin{definition}
The graph $H_\lambda$ has one vertex for every $2\times2$ block of
$[\lambda]$. Two vertices are adjacent when the corresponding blocks
overlap in a $1\times2$ or $2\times1$ rectangle. Equivalently, their
upper-left boxes differ by one unit horizontally or vertically.
\end{definition}

Define
\begin{equation}
\label{eq:squarepartition}
\lambda^\square
:=
(\lambda_2-1,\lambda_3-1,\ldots,\lambda_\ell-1),
\end{equation}
with zero parts omitted. Then $\lambda^\square$ is the partition resulting from removing both the first row and column from the Young diagram of $\lambda$.  The possible upper-left boxes of the
$2\times2$ blocks form exactly this Young diagram. Therefore
\begin{equation}
\label{eq:Hshape}
H_\lambda= G_{\lambda^\square}.
\end{equation}
The number of vertices of $H_\lambda$ is
\begin{equation}
\label{eq:q}
q:=|V(H_\lambda)|
=
\sum_{i=2}^{\ell}(\lambda_i-1)
=
|\lambda|-\lambda_1-\ell+1.
\end{equation}

Every $2\times2$ block $Q$ determines a four-edge cycle $\partial Q$ in
$G_\lambda$. These are precisely the boundaries of the bounded faces in
the natural plane embedding of $G_\lambda$.

\begin{lemma}
\label{lem:facebasis}
The cycles
\[
\{\partial Q:Q\in V(H_\lambda)\}
\]
form a basis of $\cC(G_\lambda)$.
\end{lemma}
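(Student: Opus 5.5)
The plan is to combine a dimension count with a linear-independence argument. By \eqref{eq:cycledim} and \eqref{eq:q}, $\dim\cC(G_\lambda)=\sum_{i=2}^{\ell}(\lambda_i-1)=q$, and there are exactly $q$ cycles $\partial Q$. So it suffices to show that the face cycles $\partial Q$ are linearly independent over $\F$; together with the dimension count this makes them a basis.

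\textbf{Independence.} Suppose a nonempty set $\mathcal S\subseteq V(H_\lambda)$ of blocks satisfies $\sum_{Q\in\mathcal S}\partial Q=0$. We derive a contradiction with an extremal-block argument.
\begin{itemize}
\item Choose $Q\in\mathcal S$ whose upper-left box $(i,j)$ is maximal in some linear order, say largest $i$ and then largest $j$.
\item Consider the edge of $G_\lambda$ joining boxes $(i+1,j)$ and $(i+1,j+1)$. This is the bottom horizontal edge of $\partial Q$.
\item That edge lies on the boundary of at most two $2\times 2$ blocks: those with upper-left boxes $(i,j)$ and $(i+1,j)$.
\item The block at $(i+1,j)$ is not in $\mathcal S$ by maximality of $i$.
\end{itemize}
So this edge appears exactly once in the sum, and its coefficient is $1$, a contradiction.

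The only care needed is the claim about which faces contain a given edge. A horizontal edge between the boxes in row $i+1$, columns $j$ and $j+1$, lies in exactly the blocks above and below it, namely those with upper-left corners $(i,j)$ and $(i+1,j)$, whenever those blocks exist. This is immediate from the definition of blocks, so no real obstacle arises.

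Alternatively, one could invoke the standard fact that the bounded face boundaries of a connected plane graph form a basis of the cycle space, since the $\partial Q$ are exactly the bounded faces. The direct argument above is self-contained, so I would present that one.
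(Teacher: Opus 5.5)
Your proof is correct, but it takes a different route from the paper. The paper proves the lemma by citing the general theorem that the bounded face boundaries of a connected plane graph form a basis of the binary cycle space (Diestel). It combines this with the observation that the bounded faces of the natural embedding of $G_\lambda$ are exactly the squares coming from $2\times2$ blocks.

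You instead match the count $q$ in \eqref{eq:q} against $\dim\cC(G_\lambda)=|E(G_\lambda)|-|V(G_\lambda)|+1$ from \eqref{eq:cycledim}, and prove independence by peeling. If $Q\in\mathcal S$ has upper-left box $(i,j)$ with $i$ maximal, the bottom edge of $\partial Q$ lies only in $\partial Q$ and in the boundary of the block at $(i+1,j)$. That block is excluded, so the edge has coefficient $1$. Only the maximality of $i$ is used; the tie-break on $j$ is unnecessary.

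Your version is purely combinatorial. It needs only the spanning-tree formula for the cycle-space dimension of a connected graph. It never has to identify the faces of the embedding or use Euler's formula, a step the paper treats as evident. It also isolates where the shape of $\lambda$ matters. Your independence step works verbatim for any polyomino, and it is the equality of the two counts that fails in general. For example, the holed region in Remark~\ref{rem:simplyconnected} has cell graph $C_{10}$, with a one-dimensional cycle space but no $2\times2$ blocks. The paper's citation, in exchange, is shorter and applies to any plane graph, including the extra hole faces contemplated in Problem~\ref{prob:holes}.
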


\begin{proof}
For any connected plane graph, the boundaries of the bounded faces form
a basis of the binary cycle space \cite[proof of Theorem 4.5.1]{Diestel}. In the natural embedding of
$G_\lambda$, the bounded faces are exactly the square faces corresponding
to the $2\times2$ blocks of $\lambda$. Hence their boundary cycles form
a basis.
\end{proof}
Consequently, for each coefficient vector
\[
z=(z_Q)_{Q\in V(H_\lambda)}
\in \F^{V(H_\lambda)},
\]
define
\begin{equation}
\label{eq:faceexpansion}
c(z)
=
\sum_{Q\in V(H_\lambda)} z_Q\,\partial Q.
\end{equation}
The map
\[
z\longmapsto c(z)
\]
is an isomorphism from $\F^{V(H_\lambda)}$ onto
$\cC(G_\lambda)$. Equivalently, every cycle in
$\cC(G_\lambda)$ has a unique expression of the form
\eqref{eq:faceexpansion}.

\begin{theorem}
\label{thm:facekernel}
The map $z\mapsto c(z)$ restricts to an isomorphism
\[
\ker_{\F}A(H_\lambda)
\cong
\cB(G_\lambda).
\]
Hence
\[
\dim_{\F}\cB(G_\lambda)
=
\nul_{\F}A(H_\lambda).
\]
\end{theorem}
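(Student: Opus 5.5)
Since $z\mapsto c(z)$ is already an isomorphism $\F^{V(H_\lambda)}\to\cC(G_\lambda)$ by Lemma~\ref{lem:facebasis}, and $\cB(G_\lambda)=\cC(G_\lambda)\cap\cC^*(G_\lambda)$, it suffices to show that $c(z)$ lies in the cut space exactly when $A(H_\lambda)z=0$. Because $\cC^*(G_\lambda)=\cC(G_\lambda)^\perp$ and the face boundaries $\partial Q$ span $\cC(G_\lambda)$, a cycle $c(z)$ is a cut if and only if it is orthogonal to every basis cycle $\partial Q'$:
\[
\langle c(z),\partial Q'\rangle=\sum_{Q}z_Q\,\langle\partial Q,\partial Q'\rangle=0\quad\text{for all }Q'\in V(H_\lambda).
\]
So the plan is to compute the Gram matrix $M_{Q,Q'}=\langle \partial Q,\partial Q'\rangle=|\partial Q\cap\partial Q'|\bmod 2$ over $\F$, and to show that $M=A(H_\lambda)$. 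Then $c(z)\in\cB(G_\lambda)$ if and only if $Mz=0$, that is, $z\in\ker A(H_\lambda)$. Restricting the isomorphism $z\mapsto c(z)$ to this kernel gives the claimed isomorphism, and the dimension equality follows.

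The key step is the Gram computation, which is a local geometric check. The diagonal entries are $|\partial Q|=4\equiv 0$, matching the zero diagonal of $A(H_\lambda)$. For $Q\neq Q'$, the edges of $G_\lambda$ common to both $\partial Q$ and $\partial Q'$ correspond to shared sides interior to both blocks. I will check by cases on the offset between upper-left boxes:
\begin{itemize}
\item If the offset is one unit horizontally or vertically, the blocks overlap in a $1\times 2$ or $2\times1$ rectangle. The two four-cycles then share exactly the one edge of $G_\lambda$ joining the two overlapping cells, so the entry is $1$.
\item If the offset is diagonal, the blocks share a single cell and no edge, so the entry is $0$.
\item For any larger offset the blocks share no cells at all, so the entry is $0$.
\end{itemize}
Hence $M=A(H_\lambda)$.

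The main point requiring care is the duality step: one must use that a vector in $\F^E$ is a cut if and only if it is orthogonal to all cycles. This holds because $\cC^*=\cC^\perp$ by definition, and orthogonality to a spanning set suffices. This is also exactly where linear independence of the face basis matters: it makes $z\mapsto c(z)$ injective, so kernels correspond bijectively and not merely surjectively. I expect no other obstacle. The edge-sharing case check is the only concrete computation, and it is immediate from the grid geometry.
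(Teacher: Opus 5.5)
Your proposal is correct and follows essentially the same route as the paper: you reduce membership in $\cB(G_\lambda)$ to orthogonality against the face-boundary basis $\{\partial Q\}$, identify the Gram matrix $\langle\partial Q,\partial R\rangle$ with $A(H_\lambda)$ (diagonal entries $4\equiv 0$, and a single shared edge exactly for a $1\times2$ or $2\times1$ overlap), and restrict the face-expansion isomorphism to $\ker_{\F}A(H_\lambda)$. Your case check on diagonal and larger offsets just spells out a step the paper states in one line.
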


\begin{proof}
A cycle $c(z)$ is a bicycle exactly when it is orthogonal to every cycle.
By Lemma~\ref{lem:facebasis}, it suffices to test orthogonality against
each face boundary $\partial R$.

The diagonal inner product is
\[
\langle\partial R,\partial R\rangle
=
|\partial R|
=
4
=
0
\qquad\text{in }\F.
\]
For distinct square faces $Q$ and $R$, their boundaries share one graph
edge exactly when the corresponding $2\times2$ blocks overlap in a
$1\times2$ or $2\times1$ strip. Thus
\[
\langle\partial Q,\partial R\rangle
=
\begin{cases}
1,&Q\sim R\text{ in }H_\lambda,\\
0,&\text{otherwise}.
\end{cases}
\]
It follows from \eqref{eq:faceexpansion} that
\[
\langle c(z),\partial R\rangle
=
\sum_{Q\sim R}z_Q.
\]
The collection of these equations, one for every $R$, is precisely the matrix equation $A(H_\lambda)z=0.$
Since the face expansion is an isomorphism from $\F^q$ onto
$\cC(G_\lambda)$, its restriction gives the claimed isomorphism.
\end{proof}

Combining Theorem~\ref{thm:facekernel} with
Corollary~\ref{cor:sigmabicycle} gives the simplest formula for the number of orbits, expressed only in terms of the nullity of an adjacency matrix.

\begin{corollary}
\label{cor:blockformula}
The number of billiard orbits in $\lambda$ is:
\begin{equation}
\label{eq:blockformula}
\sigma(\lambda)
=
1+\nul_{\F}A(H_\lambda).
\end{equation}
In particular we have a simple condition for there to be a unique orbit:
\[
\sigma(\lambda)=1
\quad\Longleftrightarrow\quad
A(G_{\lambda^\square})
\text{ is invertible over }\F.
\]
\end{corollary}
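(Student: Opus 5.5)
The first formula should follow by chaining two results already in hand. Corollary~\ref{cor:sigmabicycle} gives $\sigma(\lambda)=1+\dim_{\F}\cB(G_\lambda)$ for every nonempty $\lambda$. Theorem~\ref{thm:facekernel} identifies $\cB(G_\lambda)$ with $\ker_{\F}A(H_\lambda)$ through the face expansion $z\mapsto c(z)$, so $\dim_{\F}\cB(G_\lambda)=\nul_{\F}A(H_\lambda)$. Substituting this into the corollary yields \eqref{eq:blockformula}.

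The second statement is then a reformulation using the shape identification \eqref{eq:Hshape}, namely $H_\lambda=G_{\lambda^\square}$. This is a graph isomorphism: the upper-left corners of $2\times2$ blocks are exactly the boxes $(i,j)$ with $i\ge2$-shifted indexing of $\lambda^\square$. Two blocks overlap in a domino precisely when their corners are horizontally or vertically adjacent, which is the adjacency in $G_{\lambda^\square}$. Hence the adjacency matrices agree after a simultaneous permutation of rows and columns. Such a permutation does not change nullity, so $\sigma(\lambda)=1$ if and only if $\nul_{\F}A(G_{\lambda^\square})=0$. For a square matrix this is equivalent to invertibility over $\F$.

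The only point needing care, and the closest thing to an obstacle, is the degenerate case where $\lambda^\square$ is empty, i.e.\ $\lambda$ is a hook (including $\lambda=(1)$). Then $H_\lambda$ has no vertices and $A(H_\lambda)$ is the $0\times0$ matrix, with nullity $0$ and invertible by convention. So the formula predicts $\sigma=1$. I would check this directly for consistency. By \eqref{eq:cycledim}, $G_\lambda$ is a tree when $q=0$, so $\cC(G_\lambda)=0$, hence $\cB(G_\lambda)=0$ and Corollary~\ref{cor:sigmabicycle} gives $\sigma(\lambda)=1$. For $\lambda=(1)$, the corollary already covers this case, since it was verified directly.

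With that convention stated, the proof is complete.
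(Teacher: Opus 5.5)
Your proposal is correct and follows the paper's argument: the paper obtains \eqref{eq:blockformula} by combining Corollary~\ref{cor:sigmabicycle} with Theorem~\ref{thm:facekernel}, reads off the one-orbit criterion via $H_\lambda=G_{\lambda^\square}$ from \eqref{eq:Hshape}, and treats the hook case (empty $0\times0$ matrix) in the remark right after the corollary. Your separate check of hooks is not strictly needed, since Theorem~\ref{thm:facekernel} already holds vacuously there ($\cC(G_\lambda)=0$). Also, the phrase ``$i\ge2$-shifted indexing'' is garbled: the upper-left corners $(i,j)$, with $1\le i\le\ell-1$ and $1\le j\le\lambda_{i+1}-1$, are exactly the boxes of $\lambda^\square$ with the same indices.
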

Corollary \ref{cor:blockformula} gives a  fast method for determining $\sigma(\lambda)$, as Gaussian elimination is extremely fast. Certainly it runs much faster than our original approach to count the orbits by computing them geometrically, although the geometric approach produced the nice diagrams illustrating the structure of the orbits, and gives information about the lengths of the orbits that the matrix approach does not.

\begin{example}
\label{ex:433blockformula}
Let $\lambda=(4,3,3)$. We compute the number of billiard orbits using
Corollary~\ref{cor:blockformula}. Since $\lambda^\square=(2,2)$ we have $H_\lambda$ is the cell-adjacency graph $G_{(2,2)}$.
This graph is simply a four-cycle with
adjacency matrix
\[
A(H_\lambda)
=
\begin{pmatrix}
0&1&0&1\\
1&0&1&0\\
0&1&0&1\\
1&0&1&0
\end{pmatrix}
\]
which  has rank two and nullity two.
Applying \eqref{eq:blockformula}, we obtain
\[
\sigma(4,3,3)
=
1+\nul_{\F}A(H_\lambda)
=
1+2
=
3,
\] which matches Figure~\ref{fig:orbitexamples}(c).
\end{example}

\begin{example}
\label{ex:431blockformula}
Let $\lambda=(4,3,1)$ so $
\lambda^\square=
(2)$. Then $H_\lambda$ is simply two vertices joined by a single edge so has adjacency matrix
$\begin{pmatrix}
0&1\\
1&0
\end{pmatrix}.$ Over $\F$, this matrix is invertible, so $\sigma(4,3,1)=1$.
\end{example}

\begin{remark}
If $\lambda$ has no $2\times2$ block, then $H_\lambda$ is empty. Its
$0\times0$ adjacency matrix has nullity zero, and
\eqref{eq:blockformula} gives $\sigma(\lambda)=1$. Such partitions are
exactly the hooks $(a,1^b)$, which are easily seen to have a single orbit.
\end{remark}

Using this linear algebra approach we computed  Table \ref{tab:orbitdistribution} which shows the distribution of the values $\sigma(\lambda)$:

\begin{table}[ht]
\centering
\caption{The number of partitions $\lambda\vdash n$ with each orbit
number $\sigma(\lambda)$, for $1\leq n\leq 25$. Each row sums to the
number of partitions $p(n)$.}
\label{tab:orbitdistribution}
\begin{minipage}[t]{0.48\textwidth}
\centering
\begin{tabular}{r rrrrr r}
\toprule
$n$ & $1$ & $2$ & $3$ & $4$ & $5$ & $p(n)$ \\
\midrule
1  & 1   & 0   & 0  & 0 & 0 & 1    \\
2  & 2   & 0   & 0  & 0 & 0 & 2    \\
3  & 3   & 0   & 0  & 0 & 0 & 3    \\
4  & 4   & 1   & 0  & 0 & 0 & 5    \\
5  & 5   & 2   & 0  & 0 & 0 & 7    \\
6  & 8   & 3   & 0  & 0 & 0 & 11   \\
7  & 11  & 4   & 0  & 0 & 0 & 15   \\
8  & 14  & 8   & 0  & 0 & 0 & 22   \\
9  & 17  & 12  & 1  & 0 & 0 & 30   \\
10 & 24  & 16  & 2  & 0 & 0 & 42   \\
11 & 31  & 22  & 3  & 0 & 0 & 56   \\
12 & 40  & 33  & 4  & 0 & 0 & 77   \\
13 & 49  & 44  & 8  & 0 & 0 & 101  \\
\bottomrule
\end{tabular}
\end{minipage}
\hfill
\begin{minipage}[t]{0.48\textwidth}
\centering
\begin{tabular}{r rrrrr r}
\toprule
$n$ & $1$ & $2$ & $3$ & $4$ & $5$ & $p(n)$ \\
\midrule
14 & 64  & 59  & 12  & 0  & 0 & 135  \\
15 & 81  & 78  & 17  & 0  & 0 & 176  \\
16 & 104 & 104 & 22  & 1  & 0 & 231  \\
17 & 127 & 136 & 32  & 2  & 0 & 297  \\
18 & 160 & 176 & 46  & 3  & 0 & 385  \\
19 & 197 & 224 & 65  & 4  & 0 & 490  \\
20 & 246 & 289 & 84  & 8  & 0 & 627  \\
21 & 299 & 366 & 115 & 12 & 0 & 792  \\
22 & 370 & 461 & 154 & 17 & 0 & 1002 \\
23 & 449 & 580 & 204 & 22 & 0 & 1255 \\
24 & 552 & 728 & 262 & 33 & 0 & 1575 \\
25 & 663 & 904 & 344 & 46 & 1 & 1958 \\
\bottomrule
\end{tabular}
\end{minipage}
\end{table}

The reader will  observe the first nonzero entry in column $k$ is a one at $n=k^2$, corresponding to the square partition $(k^k)$. This is no coincidence.  Recall that the \emph{Durfee length} of $\lambda$ is the side length of the largest square (the \emph{Durfee square}) that fits inside $[\lambda]$. Alternately it is the largest $k$ so that $\lambda_k \geq k$. The following result was communicated to us by Aidan Botkin
\cite{BotkinPrivate}. We give a linear-algebraic proof using the
adjacency-nullity formula.

\begin{theorem}\cite{BotkinPrivate}
\label{thm:durfee-bound}
Let $\lambda$ be a partition whose Durfee square has side length
$d$. Then
$
\sigma(\lambda)\leq d.$
\end{theorem}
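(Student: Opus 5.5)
By Corollary~\ref{cor:blockformula}, $\sigma(\lambda)=1+\nul_{\F}A(G_{\lambda^\square})$. Deleting the first row and first column of $[\lambda]$ shrinks the Durfee square by exactly one, so $\mu:=\lambda^\square$ has Durfee length $e=d-1$. (If $d=1$ then $\lambda$ is a hook and $\mu$ is empty.) It therefore suffices to prove
\[
\nul_{\F}A(G_\mu)\le e
\]
for every partition $\mu$ of Durfee length $e$. I would do this with a ``light-chasing'' argument. A kernel vector $z\in\ker A(G_\mu)$ satisfies, at each cell $c$, the equation $\sum_{c'\sim c}z_{c'}=0$. The goal is to show that $z$ is uniquely determined by a set of at most $e$ of its coordinates, so that restriction to those coordinates is injective on the kernel.

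I would process $[\mu]$ in hook-shaped shells $S_m=\{(i,j)\in[\mu]:\max(i,j)=m\}$ for $m=1,2,\dots$, and show inductively that $z|_{S_m}$ is determined by $z$ on earlier shells together with at most one new free parameter when $m\le e$, and no new parameter when $m>e$.

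\emph{Arm cells.} For a cell $(i,m)$ with $i\le m-2$, the equation at $(i,m-1)$ expresses $z_{i,m}$ in terms of $z_{i,m-2}$, $z_{i-1,m-1}$ and $z_{i+1,m-1}$. All three of these lie in shells of index $<m$, or are zero if the cell lies outside $\mu$.

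\emph{Leg cells.} Symmetrically, for $(m,j)$ with $j\le m-2$ we use the equation at $(m-1,j)$.

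For $m>e$ the cell $(m,m)$ is absent. Moreover $S_m$ lies entirely in the arm or entirely in the leg, since $\mu_m<m$ forces $(m,m-1)$ and $(m-1,m)$ not both to be present in a coupled way. Hence the two remaining cells $(m-1,m)$ and $(m,m-1)$ (whichever exist) can also be solved from the equations at $(m-1,m-1)$, $(m-2,m)$ or $(m,m-2)$. These equations involve only one unknown once the other cell is absent, so no new freedom enters.

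For $m\le e$ the shell contains the coupled corner cells $(m-1,m)$, $(m,m-1)$ and $(m,m)$. The equation at $(m-1,m-1)$ determines the sum $z_{m-1,m}+z_{m,m-1}$ from earlier shells. So naively two parameters are left, for example $z_{m,m-1}$ and $z_{m,m}$.

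The main obstacle is to cut this down to one free parameter per diagonal shell. My first attempt would be to postpone the choice of $z_{m,m}$: its value should be forced by an equation not yet used. The natural candidates are the equation at $(m-1,m)$ or at $(m,m-1)$ (if the arm/leg cell $(m-1,m+1)$ or $(m+1,m-1)$ is absent), or otherwise the equation at $(m,m)$ after shell $m+1$ has been chosen. In other words, I would redo the bookkeeping so that each shell $m\le e$ introduces a single parameter, say $z_{m,m-1}$. The diagonal value $z_{m,m}$ would then be solved from the equation at the cell $(m,m-1)$ or $(m-1,m)$ whose remaining neighbours are already known, and the transposition symmetry (Lemma~\ref{lem:conjugation-invariance}) would handle the two orientations. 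If this reordering works, $z$ is determined by $e$ parameters, giving $\nul A(G_\mu)\le e$ and hence $\sigma(\lambda)\le 1+e=d$.

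As sanity checks, the rectangle case should reproduce $\gcd(m,n)\le\min(m,n)$. The square $(k^k)$ should attain equality, matching Table~\ref{tab:orbitdistribution}.
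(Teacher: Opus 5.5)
\textbf{There is a genuine gap: the inside-out shell propagation does not prove the bound.} Your reduction to $\nul_{\F}A(G_\mu)\le e$, with $\mu=\lambda^\square$ of Durfee length $e=d-1$, is correct and matches the paper. But the step you call ``the main obstacle'' is the entire content of the theorem, and the repairs you suggest cannot work.

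For $2\le m\le e$ the shell $S_m$ is full. Its corner cells $(m-1,m),(m,m-1),(m,m)$ are touched by only one fresh equation, the one at $(m-1,m-1)$. So your bookkeeping yields $1+2(e-1)=2e-1$ parameters, and there is no unused local equation left to remove the extra $e-1$:
\begin{itemize}
\item The equation at $(m,m)$ cannot force $z_{m,m}$, because $A(G_\mu)$ has zero diagonal. That equation reads $z_{m-1,m}+z_{m,m-1}+z_{m+1,m}+z_{m,m+1}=0$, and it is exactly the one your sum rule consumes in shell $m+1$.
\item The equations at $(m-1,m)$ and $(m,m-1)$ are already used by your arm/leg rule to solve for $z_{m-1,m+1}$ and $z_{m+1,m-1}$. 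Those cells exist whenever $m<e$.
\end{itemize}
After outward propagation, every equation has either determined a cell or is left over. The leftover ones sit at cells whose outward neighbour lies outside $[\mu]$. You would need these boundary equations to have rank at least $e-1$ on your parameter space, which is a global statement that the local argument gives no handle on.

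Your claim that no freedom enters for $m>e$ is also false. Take $\mu=(3,3,2)$, so $e=2$. The shell $S_3=\{(1,3),(2,3),(3,1),(3,2)\}$ meets both the arm and the leg. The equation at $(2,2)$ fixes only $z_{2,3}+z_{3,2}$. So your scheme introduces $1+2+1=4$ parameters, while the true nullity is $2$.

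\textbf{The missing idea is to propagate inward from the outer boundary rather than outward from the corner $(1,1)$.} Cells outside $[\mu]$ supply known zeros, so forcing works from that side. The paper's argument goes as follows.
\begin{itemize}
\item The determining set is the $e$ row ends $s_i=(i,\mu_i)$, $1\le i\le e$. These are exactly the last cells of the rows that meet the region $\{i\le j\}$.
\item Sweep the region $\{i\le j\}$ column by column from right to left. For $j<\mu_i$, solve for $x_{i,j}$ from the equation at $(i,j+1)$, namely $x_{i,j}=x_{i,j+2}+x_{i-1,j+1}+x_{i+1,j+1}$. Every term on the right lies in a later column of the same region, or is zero.
\item Sweep the region $\{i>j\}$ row by row from top to bottom, solving for $x_{i,j}$ from the equation at $(i-1,j)$.
\end{itemize}
Each equation used contains exactly one unknown. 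Hence restriction to $x_{s_1},\dots,x_{s_e}$ is injective on $\ker A(G_\mu)$. This gives $\nul_{\F}A(G_\mu)\le e$, and therefore $\sigma(\lambda)\le d$.
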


\begin{proof}
Put $\mu=\lambda^\square$ so $\mu$ has Durfee length $r:=d-1$.
If $r=0$, then $\lambda$ is a hook partition with one orbit, so the result holds.
So assume $r\geq1$.

For $1\leq i\leq r$,
let $s_i=(i,\mu_i)$
be the rightmost box in row $i$ of the Young diagram $[\mu]$. These are also the northeast endpoints
of the $r$ diagonal hooks of $\mu$.

We claim that a vector
\[
x\in\ker_{\mathbb F_2}A(G_\mu)
\]
is uniquely determined by the $r$ coordinates
$x_{s_1},\ldots,x_{s_r}.
$
For convenience, set $x_{i,j}=0$ whenever $(i,j)$ is not a box
of $\mu$. The equation corresponding to the box $(i,j)$ is then
\[
x_{i-1,j}+x_{i+1,j}+x_{i,j-1}+x_{i,j+1}=0.
\]
So we assume the $x_{s_i}$ are all specified. We first show how to determine all coordinates $x_{i,j}$ on or above the main diagonal, i.e. with $i \leq j$. This is done column by column, from right to left. Then we use these coordinates to proceed row by row top to bottom to determine the $x_{i,j}$ with $i>j.$

The boxes in the rightmost column lying on or above the main diagonal are all among the $s_i$. So we may suppose that all coordinates in columns strictly to the right of
column $j$ have already been determined. Consider a box $(i,j)$
with
\[
1\leq i\leq j.
\]
If $j=\mu_i$, then $(i,j)=s_i$, so its coordinate is one of the
prescribed coordinates. If $j<\mu_i$, then the box $(i,j+1)$
belongs to $\mu$, and its adjacency equation gives
\[
x_{i,j}
=
x_{i,j+2}
+x_{i-1,j+1}
+x_{i+1,j+1}.
\]
Every coordinate on the right-hand side lies in column $j+1$ or
column $j+2$, and hence has already been determined. Thus, at each
stage, we determine one new box in every applicable row. Continuing
column by column determines all $x_{i,j}$ with $i\leq j$.

We next determine the coordinates below the main diagonal. This is
done row by row, from top to bottom, in the exact same manner.

Consequently, every coordinate of $x$ is uniquely determined by
the $r$ values at the northeast endpoints of the diagonal hooks.
It follows that
\[
\nul_{\mathbb F_2}A(G_\mu)\leq r=d-1.
\]
Using the adjacency-nullity formula, we obtain
\[
\sigma(\lambda)
=
1+\nul_{\mathbb F_2}A(G_\mu)
\leq
1+(d-1)
=
d.
\]
\end{proof}

\begin{remark} We believe this result illustrates the power of the linear algebra method, proving each trajectory passes through the Durfee square is doable but difficult to make precise.
\end{remark}

\begin{corollary}
\label{cor:orbitslessthansqrtn}
Suppose $\lambda$ is a partition of $n$. Then $\sigma(\lambda) \leq \sqrt{n}.$
\end{corollary}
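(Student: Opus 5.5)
This follows directly from Theorem~\ref{thm:durfee-bound}. The plan is to bound the Durfee length $d$ of $\lambda$ by $\sqrt{n}$ and then chain the two inequalities.

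By definition of the Durfee square, $\lambda_d \geq d$. Since $\lambda$ is weakly decreasing, $\lambda_i \geq d$ for every $1\leq i\leq d$. The $d\times d$ square $\{(i,j):1\leq i,j\leq d\}$ is therefore contained in $[\lambda]$, so
\[
d^2 \leq |\lambda| = n.
\]
Taking square roots gives $d\leq\sqrt{n}$.

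Theorem~\ref{thm:durfee-bound} gives $\sigma(\lambda)\leq d$, and hence $\sigma(\lambda)\leq d\leq\sqrt{n}$.

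The only case needing a remark is the hook case $d=1$. Here $\sigma(\lambda)=1\leq\sqrt{n}$ holds trivially, since $n\geq 1$ for a nonempty partition. The theorem already covers this case, so no separate argument is required.

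There is no real obstacle; all of the substance is in Theorem~\ref{thm:durfee-bound}. Equality $\sigma(\lambda)=\sqrt{n}$ would force $n=d^2$, i.e.\ $\lambda=(d^d)$. This matches the observation that column $k$ of Table~\ref{tab:orbitdistribution} first becomes nonzero at $n=k^2$.
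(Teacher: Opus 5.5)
Your proposal is correct and is exactly the argument the paper intends (it states the corollary without proof, directly after Theorem~\ref{thm:durfee-bound}): the Durfee square gives $d^2\le n$, and chaining this with $\sigma(\lambda)\le d$ yields $\sigma(\lambda)\le\sqrt{n}$. Your equality analysis is also right. Equality forces $\lambda=(d^d)$, and $\sigma(d^d)=\gcd(d,d)=d$ by Theorem~\ref{thm:gridnullity}, so it is indeed attained there.
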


\section{Rectangular partitions and Fibonacci polynomials}
The formula  $\sigma(n^m)=\gcd(m,n)$ appears in \cite{Gerdes1999} and may be derived directly from the orbit diagram. In this section we prove it with our adjacency matrix approach. We find some interesting algebra involving Fibonacci polynomials which we later speculate may generalize to more complicated partitions. So  let $\lambda=(n^m)$. The graph $H_\lambda$ is just an $(m-1)\times (n-1)$ grid, which is a graph Cartesian product of paths on $m-1$ and $n-1$ vertices. We need to determine the nullity of the adjacency matrix of this graph.

We begin by recalling the \emph{Fibonacci polynomials}, defined recursively over $\F$ by
\begin{equation}
\label{eq:recursivedefFibonacciPoly}
p_0(t)=1,
\qquad
p_1(t)=t,
\qquad
p_k(t)=tp_{k-1}(t)+p_{k-2}(t).
\end{equation}
The greatest common divisors of these polynomials over the ring $\F[t]$ satisfy a nice property we will need later, namely that they are also Fibonacci polynomials:

\begin{lemma}
\label{lem:fibgcd}
For all $r,s\geq 0$,
\[
\gcd\bigl(p_r(t),p_s(t)\bigr)
=
p_{\gcd(r+1,s+1)-1}(t),
\]
\end{lemma}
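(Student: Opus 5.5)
The plan is to shift indices so the statement becomes the classical strong-divisibility property, and then run the Euclidean algorithm on indices. Put $F_k(t):=p_{k-1}(t)$ for $k\geq 1$ and $F_0(t):=0$. Then $F_1=1$, $F_2=t$, and $F_{k}=tF_{k-1}+F_{k-2}$ for all $k\geq 2$; the case $k=2$ reads $t=t\cdot 1+0$. The claim becomes $\gcd(F_a,F_b)=F_{\gcd(a,b)}$ for all $a,b\geq 1$. I will prove this for all $a,b\geq 0$, using the convention $\gcd(f,0)=f$ for monic $f$; over $\F$ every nonzero polynomial is monic, so gcds are unambiguous.

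\textbf{Step 1 (addition formula).} I will prove by induction on $n$ that
\[
F_{m+n}=F_{m+1}F_n+F_mF_{n-1}\qquad(m\geq 0,\ n\geq 1).
\]
For $n=1$ this is $F_{m+1}=F_{m+1}$. For $n=2$ it is $F_{m+2}=tF_{m+1}+F_m$, which is the recursion. For the inductive step, add $t$ times the $n$ case to the $n-1$ case. A matrix proof works equally well: over $\F$ we have $\begin{pmatrix}t&1\\1&0\end{pmatrix}^k=\begin{pmatrix}F_{k+1}&F_k\\F_k&F_{k-1}\end{pmatrix}$, and the formula follows by comparing entries of $M^{m+n}=M^mM^n$. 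Taking determinants of the same matrix gives a Cassini identity, $F_{k+1}F_{k-1}+F_k^2=1$ (signs vanish in characteristic $2$). Hence $\gcd(F_k,F_{k+1})=1$.

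\textbf{Step 2 (reduction step).} I will show $\gcd(F_{m+n},F_n)=\gcd(F_m,F_n)$. By Step 1, $F_{m+n}\equiv F_mF_{n-1}\pmod{F_n}$, so $\gcd(F_{m+n},F_n)=\gcd(F_mF_{n-1},F_n)$. Since $\gcd(F_{n-1},F_n)=1$ by Cassini, the factor $F_{n-1}$ can be dropped. The edge case $n=1$ is fine because $F_1=1$.

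\textbf{Step 3 (Euclid on indices).} Iterating Step 2 gives $\gcd(F_a,F_b)=\gcd(F_{a\bmod b},F_b)$. Running the Euclidean algorithm on the pair $(a,b)$ ends at a pair $(g,0)$, and there $\gcd(F_g,F_0)=\gcd(F_g,0)=F_g$. Translating back with $a=r+1$ and $b=s+1$ gives
\[
\gcd(p_r,p_s)=F_{\gcd(r+1,s+1)}=p_{\gcd(r+1,s+1)-1}.
\]

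The only real obstacle is bookkeeping. One must confirm that the shifted sequence, including $F_0=0$, satisfies the recursion and the matrix identity at the smallest indices, and that the coprimality of consecutive terms holds. Both points are settled by the determinant identity.
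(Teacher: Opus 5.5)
Your proposal is correct and follows the paper's proof: the same shift $F_n=p_{n-1}$ with $F_0=0$, the same addition identity $F_{m+n}=F_{m+1}F_n+F_mF_{n-1}$, coprimality of consecutive terms, and the Euclidean algorithm on the indices. The only addition is that you justify the coprimality step with the characteristic-$2$ Cassini identity $F_{k+1}F_{k-1}+F_k^2=1$, whereas the paper simply cites this step as a known fact.
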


\begin{proof}
Set $F_n=p_{n-1}$ for $n\geq 1$, with $F_0=0$. Then
\[
F_{n+1}=tF_n+F_{n-1}.
\]
The standard addition identity
\[
F_{a+b}=F_aF_{b+1}+F_{a-1}F_b
\]
and the fact that consecutive Fibonacci polynomials are coprime implies,
for $m>n$,
\[
\gcd(F_m,F_n)=\gcd(F_{m-n},F_n).
\]
The Euclidean algorithm on the indices therefore gives
\[
\gcd(F_m,F_n)=F_{\gcd(m,n)}.
\]
Taking $m=r+1$ and $n=s+1$ yields the result.
\end{proof}

These polynomials arise as characteristic polynomials of the adjacency matrices for the paths, remembering again that we are working over $\F$:

\begin{proposition}
  \label{prop:Fibonaccipolynomialequalcharapoly}
  Let $A_s=A(P_s)$ be the adjacency matrix of the path $P_s$ on $s$ vertices. Then the characteristic polynomial of $A_s$ is $p_s(t)$.
\end{proposition}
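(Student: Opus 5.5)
We argue by induction on $s$, expanding the determinant $\det(tI+A_s)$ along its last row; over $\F$ we have $tI-A_s=tI+A_s$, so this determinant is the characteristic polynomial. We take as conventions that $A_0$ is the empty matrix with characteristic polynomial $1=p_0(t)$, and that $A_1=(0)$ gives $\det(t)=t=p_1(t)$. These cover the base cases. For $s=2$ a direct check gives $\det\begin{pmatrix}t&1\\1&t\end{pmatrix}=t^2+1=t\cdot p_1(t)+p_0(t)=p_2(t)$, which agrees with the recursion \eqref{eq:recursivedefFibonacciPoly}.

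For the inductive step, let $s\geq 2$ and write $M_s=tI+A_s$. This is the tridiagonal matrix with $t$ on the diagonal and $1$ on the sub- and superdiagonals. We expand $\det M_s$ along the last row, whose only nonzero entries are $t$ in position $(s,s)$ and $1$ in position $(s,s-1)$.

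\begin{itemize}
\item The $(s,s)$ minor is $\det M_{s-1}$.
\item The $(s,s-1)$ minor is obtained by deleting row $s$ and column $s-1$. Its last column is $e_{s-1}$, having a single $1$ in the bottom position. Expanding along that column leaves $\det M_{s-2}$.
\end{itemize}

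Signs are irrelevant over $\F$, so
\[
\det M_s = t\det M_{s-1}+\det M_{s-2}.
\]
By the inductive hypothesis this equals $t\,p_{s-1}(t)+p_{s-2}(t)=p_s(t)$.

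The only step requiring care is identifying the second minor correctly, together with the convention for $s=0$. No real obstacle is expected.
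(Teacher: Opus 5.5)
Your proof is correct and is essentially the paper's argument. Both derive the recurrence $\det(tI+A_s)=t\det(tI+A_{s-1})+\det(tI+A_{s-2})$ by cofactor expansion and match it against the defining recurrence and initial values of $p_s(t)$; the paper expands along the first column and you along the last row, which is equivalent by the symmetry of the tridiagonal matrix.
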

\begin{proof}

Ordering the vertices along the path and using that $-1=1$ in $\F$, we have
\[
tI_s-A_s=
\begin{pmatrix}
t      & 1      & 0      & \cdots & 0      \\
1      & t      & 1      & \ddots & \vdots \\
0      & 1      & t      & \ddots & 0      \\
\vdots & \ddots & \ddots & \ddots & 1      \\
0      & \cdots & 0      & 1      & t
\end{pmatrix}.
\]
Let
\[
D_s(t)=\det(tI_s-A_s).
\]
Expanding this determinant along the first column gives
\[
D_s(t)=tD_{s-1}(t)+D_{s-2}(t).
\]
The initial values are
\[
D_1(t)=t,
\qquad
D_2(t)=t^2+1.
\]
Consequently, $D_s(t)$ satisfies the same recurrence and initial conditions as
 $p_s(t)$, giving the result.
\end{proof}

Now for $\lambda=(n^m)$ we have
\[
G_\lambda=P_m\square P_n,
\qquad
H_\lambda=P_{m-1}\square P_{n-1},
\]
where $\square$ denotes the Cartesian product of graphs.
To apply Corollary~\ref{cor:blockformula} we need to compute the nullity of the adjacency matrix of the Cartesian product of two paths:
\begin{theorem}
\label{thm:gridnullity}
For positive integers $r,s$,
\[
\nul_{\F}A(P_r\square P_s)
=
\gcd(r+1,s+1)-1.
\]
Consequently,
\[
\sigma(n^m)=\gcd(m,n).
\]
\end{theorem}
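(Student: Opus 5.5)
The plan is to write the grid adjacency matrix as a Kronecker sum and reduce the nullity computation to a gcd of characteristic polynomials. With vertices of $P_r\square P_s$ ordered lexicographically,
\[
A(P_r\square P_s)=A_r\otimes I_s+I_r\otimes A_s .
\]
A vector in $\F^{rs}$ is the same as an $r\times s$ matrix $X$, and this operator acts as $X\mapsto A_rX+XA_s^{\mathsf T}=A_rX+XA_s$, since $A_s$ is symmetric. The nullity is therefore the dimension of the solution space of the Sylvester equation
\[
A_rX=XA_s \qquad \text{over }\F,
\]
where the sign disappears because we are in characteristic $2$.

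The key input is that $A_s$ is nonderogatory. The vector $e_1$ is cyclic, because $A_s^ke_1$ has leading support at $e_{k+1}$. Hence $A_s$ is similar to the companion matrix of its characteristic polynomial, which is $p_s(t)$ by Proposition~\ref{prop:Fibonaccipolynomialequalcharapoly}. The same holds for $A_r$ and $p_r(t)$.

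Next I will use the classical fact that if $M\cong\F[t]/(f)$ and $M'\cong\F[t]/(g)$ are cyclic $\F[t]$-modules, then
\[
\operatorname{Hom}_{\F[t]}(M',M)\cong\F[t]/(\gcd(f,g)).
\]
Solutions $X$ of $A_rX=XA_s$ are exactly the $\F[t]$-module maps $\F^s\to\F^r$, where $t$ acts by $A_s$ on the source and by $A_r$ on the target. Consequently
\[
\nul_{\F}A(P_r\square P_s)=\deg\gcd\bigl(p_r(t),p_s(t)\bigr).
\]
To justify the Hom computation, note that a module map out of $\F[t]/(g)$ is determined by the image $m$ of $1$. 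The only constraint is $g\cdot m=0$ in $\F[t]/(f)$, and the set of such $m$ is $\bigl(f/\gcd(f,g)\bigr)\F[t]/(f)$, which has dimension $\deg\gcd(f,g)$.

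Lemma~\ref{lem:fibgcd} gives $\gcd(p_r,p_s)=p_{\gcd(r+1,s+1)-1}$. Since $p_k$ has degree $k$, the nullity equals $\gcd(r+1,s+1)-1$.

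For the consequence, take $\lambda=(n^m)$. Then $H_\lambda=P_{m-1}\square P_{n-1}$, and Corollary~\ref{cor:blockformula} gives
\[
\sigma(n^m)=1+\gcd(m,n)-1=\gcd(m,n).
\]
This requires $m,n\geq2$. The degenerate cases $m=1$ or $n=1$ are hooks (rows or columns), where $\sigma=1=\gcd(m,n)$ directly by the remark on hooks.

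The main obstacle is stating cleanly the identification of the Kronecker-sum kernel with the centralizer-type Hom space, and the cyclicity of $A_s$. Both are standard, and I would present them briefly. If that route feels heavy, an alternative is to diagonalize over the algebraic closure. Since $p_s$ may have repeated roots in characteristic $2$ (for example $p_3=t^3+t^2\cdot0+\dots$, and squares appear), Jordan blocks matter, so the module-theoretic argument is preferable because it handles repeated roots automatically.
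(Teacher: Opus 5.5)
Your proof is correct. It reaches the same gcd computation as the paper by a somewhat different route.

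The paper writes $A(P_r\square P_s)$ in block-tridiagonal form and solves the recurrence $x_{i+1}=A_sx_i+x_{i-1}$, with $x_0=x_{r+1}=0$, explicitly. This gives $x_i=p_{i-1}(A_s)x_1$ and hence $\ker A(P_r\square P_s)\cong\ker p_r(A_s)$. Cyclicity is invoked only for $A_s$, to identify $\F^s$ with $\F[t]/(p_s)$ and compute $\dim\ker p_r(A_s)=\deg\gcd(p_r,p_s)$.

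You instead use the Kronecker-sum form to identify the kernel with the solution space of $A_rX=XA_s$, that is, with $\operatorname{Hom}_{\F[t]}(\F^s,\F^r)$, and you use cyclicity of both factors. Unwinding your argument gives the paper's with the roles of $r$ and $s$ interchanged. Since $e_j=p_{j-1}(A_s)e_1$, a solution satisfies $Xe_j=p_{j-1}(A_r)Xe_1$. So $X$ is determined by its first column, subject to $p_s(A_r)Xe_1=0$, just as the paper's $x$ is determined by its first row $x_1$, subject to $p_r(A_s)x_1=0$.

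What each version buys:
\begin{itemize}
\item The paper's version is more elementary and self-contained. It needs only the path recurrence and one quotient-module computation, not the general Hom computation between cyclic modules.
\item Yours is symmetric in $r$ and $s$ and extends verbatim to any Cartesian product $G\square H$ over $\F$. There the nullity of $A(G\square H)$ is $\dim\{X: A(G)X=XA(H)\}$, which Frobenius's classical formula evaluates as $\sum_{i,j}\deg\gcd(a_i,b_j)$ over the invariant factors $a_i$ of $A(G)$ and $b_j$ of $A(H)$.
\end{itemize}

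One cosmetic point: the expression for $p_3$ in your closing aside is garbled. Over $\F$ one has $p_2=(t+1)^2$ and $p_3=t^3$, which are the clean examples of repeated roots. The aside plays no role in the proof.
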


\begin{proof}
The graph $P_r\square P_s$ is a rectangular grid of $r$ rows and $s$ columns. If we order the vertices along each successive row then the adjacency matrix has a simple block form, each block row has a copy of $A_s$ for the edges within the row and then identity matrices reflecting the vertical edges to the previous and following rows (or zero for the top and bottom row). For example when $r=5$ and $s=7$ we have:
\[
A(P_5\square P_7)
=
\begin{pmatrix}
A_7 & I_7 & 0   & 0   & 0   \\
I_7 & A_7 & I_7 & 0   & 0   \\
0   & I_7 & A_7 & I_7 & 0   \\
0   & 0   & I_7 & A_7 & I_7 \\
0   & 0   & 0   & I_7 & A_7
\end{pmatrix}.
\]

Regard a column vector acted on by  $A(P_r\square P_s)$ as a sequence
$x_1,\ldots,x_r\in\F^s$. So in our example above, multiplying by $A(P_5\square P_7)$ on the left we would get
$$(A_7x_1+x_2,x_1+A_7x_2+x_3, x_2+A_7x_3+x_4,x_3+A_7x_4+x_5,x_4+A_7x_5).$$
In the general case choosing
$x=(x_1,\ldots,x_r)\in\ker_{\F} A(P_r\square P_s)
$
gives a system of equations:
\[
x_{i-1}+A_sx_i+x_{i+1}=0
\]
where we set $x_0=x_{r+1}=0$.
Since we are working over $\F$,
this may equivalently be written as
\begin{equation}
\label{eq:gridrecurrence}
x_{i+1}=A_sx_i+x_{i-1}.
\end{equation}

The recursion \eqref{eq:gridrecurrence} implies a vector in the kernel is entirely determined by $x_1$.  Starting with $x_0=0$ and $x_1=u$, induction on $i$ gives
\[
x_i=p_{i-1}(A_s)u
\qquad
(1\leq i\leq r+1).
\]

The final boundary condition $x_{r+1}=0$ is therefore equivalent to
\begin{equation}
\label{eq:prcondition}
p_r(A_s)u=0.
\end{equation}
Conversely, every $u\in\ker (p_r(A_s))$ determines, by the recurrence
above, a unique vector
\[
x=
\bigl(
u,\,
p_1(A_s)u,\,
p_2(A_s)u,\,
\ldots,\,
p_{r-1}(A_s)u
\bigr) \in\ker A(P_r\square P_s).
\]
Thus the map $x\mapsto x_1$ gives an isomorphism
\begin{equation}
\label{eq:kernelofcartesianproductequalsnullspace}
\ker A(P_r\square P_s)
\cong
\ker p_r(A_s),
\end{equation}
and consequently
\[
\nul_{\F}A(P_r\square P_s)
=
\dim_{\F}\ker p_r(A_s).
\]

It now remains to compute the kernel of $p_r(A_s)$. The matrix $A_s$ has a simple structure with ones on the super and subdiagonals, for example:
\[
A_6=
\begin{pmatrix}
0&1&0&0&0&0\\
1&0&1&0&0&0\\
0&1&0&1&0&0\\
0&0&1&0&1&0\\
0&0&0&1&0&1\\
0&0&0&0&1&0
\end{pmatrix},
\]
from which we easily see the vectors
\[
e_1,\ A_se_1,\ A_s^2e_1,\ldots,A_s^{s-1}e_1
\]
form a basis of $\F^s$. Relative to the standard basis, their
coordinate matrix is triangular with every diagonal entry equal to
$1$. It follows that the minimal polynomial of $A_s$ has degree $s$.
Since the minimal polynomial divides the characteristic polynomial,
and since the characteristic polynomial of $A_s$ is $p_s(t)$ of
degree $s$, the minimal polynomial is also $p_s(t)$. Hence the map
\[
\F[t]\longrightarrow\F^s,
\qquad
f(t)\longmapsto f(A_s)e_1,
\]
induces an isomorphism of $\F[t]$-modules
\[
\F[t]/(p_s)\cong\F^s,
\]
under which multiplication by $t$ corresponds to the action of
$A_s$.

Under this identification, the operator $p_r(A_s)$ corresponds to
multiplication by $p_r(t)$ on $\F[t]/(p_s)$. Let
\[
d(t)=\gcd\bigl(p_r(t),p_s(t)\bigr),
\]
and write
\[
p_r(t)=d(t)a(t),
\qquad
p_s(t)=d(t)b(t),
\]
where
\[
\gcd\bigl(a(t),b(t)\bigr)=1.
\]

A residue class $f(t)+(p_s)$ lies in the kernel precisely when
\[
p_s(t)\mid p_r(t)f(t).
\]
Since
\[
p_s(t)=d(t)b(t)
\qquad\text{and}\qquad
p_r(t)=d(t)a(t),
\]
this is equivalent to
\[
b(t)\mid a(t)f(t).
\]
Because $a(t)$ and $b(t)$ are coprime, this holds precisely when
\[
b(t)\mid f(t).
\]

Therefore the kernel is the ideal of $\F[t]/(p_s)$ generated by
\[
b(t)=\frac{p_s(t)}{d(t)}.
\]
Its elements are represented uniquely by
\[
h(t)b(t),
\qquad
\deg h<\deg d.
\]
Consequently,
\begin{equation}
\label{eq:gcdnull}
\dim_{\F}\ker p_r(A_s)
=
\deg d
=
\deg\gcd(p_r,p_s).
\end{equation}

Since $\deg p_k=k$, equation~\eqref{eq:gcdnull} and
Lemma~\ref{lem:fibgcd} yield
\[
\begin{aligned}
\nul_{\F}A(P_r\square P_s)
&=\deg\gcd(p_r,p_s)\\
&=\deg p_{\gcd(r+1,s+1)-1}\\
&=\gcd(r+1,s+1)-1.
\end{aligned}
\]
If $m=1$ or $n=1$, then $(n^m)$ is a hook partition, so
\[
\sigma(n^m)=1=\gcd(m,n).
\]
We may therefore assume that $m,n\geq2$.
Taking $r=m-1$ and $s=n-1$ in \eqref{eq:blockformula} gives
\[
\sigma(n^m)
=
1+\gcd(m,n)-1
=
\gcd(m,n).
\]
Thus a rectangular Young diagram has one orbit precisely when its two
side lengths are coprime.
\end{proof}
\section{Odd domino tilings and the one-orbit condition}

The nonsingularity criterion for $A(H_\lambda)$ has a useful enumerative
interpretation, possible because we are working over a field of characteristic two, and thus the determinant of a matrix is the same as its permanent. Recall that a \emph{perfect matching} of a graph is a subset of the edges such that each vertex lies in precisely one edge. In particular these cannot exist if the number of vertices is odd. Perfect matchings of $G_\lambda$ are clearly the same as domino tilings of the Young diagram $[\lambda].$ We adopt the convention that the empty Young diagram has one domino tiling, namely the empty tiling. Equivalently, the empty graph has one perfect matching.

\begin{theorem}
\label{thm:oddtiling}
Let $\lambda$ be a partition. Then $\sigma(\lambda)=1$ if and only if $\lambda^\square$ has an odd number of domino tilings.
\end{theorem}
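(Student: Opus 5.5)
By Corollary~\ref{cor:blockformula}, $\sigma(\lambda)=1$ if and only if $A(G_{\lambda^\square})$ is invertible over $\F$, that is, if and only if $\det A(G_{\lambda^\square})=1$ in $\F$. It therefore suffices to show that this determinant, read mod $2$, equals the number of domino tilings of $[\lambda^\square]$ mod $2$. If $\lambda^\square$ is empty, the empty matrix has determinant $1$ and the empty diagram has one tiling, so both sides hold. We assume from now on that $\lambda^\square$ is nonempty.

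Write $A=A(G_\mu)$ with $\mu=\lambda^\square$ and $V=V(G_\mu)$. In characteristic two the determinant equals the permanent, so
\[
\det A=\sum_{\pi\in S_V}\prod_{v\in V}A_{v,\pi(v)}\pmod 2 .
\]
A permutation contributes $1$ exactly when $v\pi(v)$ is an edge for every $v$. Such permutations have no fixed points, since $G_\mu$ has no loops. Their cycle decompositions consist of transpositions along edges, together with directed cycles of length at least $3$ in $G_\mu$.

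Next define an involution on the contributing permutations: pick the cycle of length $\geq 3$ containing the smallest vertex in a fixed ordering, and reverse its direction. This is well defined because $G_\mu$ is simple, so a cycle of length $\geq3$ is distinct from its reverse. The involution preserves the contribution and has no fixed points off the set of permutations made entirely of transpositions. Hence the non-matching terms cancel in pairs mod $2$. The surviving terms, fixed-point-free involutions supported on edges, correspond exactly to perfect matchings of $G_\mu$. Therefore
\[
\det A\equiv \#\{\text{perfect matchings of }G_\mu\}\pmod 2 .
\]

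Finally, perfect matchings of $G_\mu$ are exactly the domino tilings of $[\mu]$, as noted before the theorem. So $A$ is invertible over $\F$ precisely when $\mu=\lambda^\square$ has an odd number of domino tilings, and Corollary~\ref{cor:blockformula} then gives the theorem.

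The one point needing care is the pairing argument: the reversal must really be a fixed-point-free involution on the non-matching terms. This holds because we reverse a canonically chosen cycle, and reversing it does not change which cycle is chosen.
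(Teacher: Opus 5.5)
Your proof is correct, but it takes a different route from the paper. The paper uses the checkerboard bipartition of $G_\mu$ and writes $A(G_\mu)=\begin{pmatrix}0&B\\B^{\mathsf T}&0\end{pmatrix}$. It handles the unbalanced case directly: then $A$ is singular and there is no perfect matching. In the balanced case it uses $\det A=\det(B)^2=\det B=\operatorname{perm}B$ over $\F$, and cites the standard fact that the permanent of a biadjacency matrix counts perfect matchings, so no cancellation argument is needed.

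You instead work with the full adjacency matrix. You prove the general congruence $\operatorname{perm}A(G)\equiv\#\{\text{perfect matchings of }G\}\pmod 2$ for any loopless graph by reversing a canonically chosen cycle of length $\geq 3$.

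Your argument is self-contained and never uses bipartiteness. It also needs no separate treatment of unequal color classes: those simply have zero perfect matchings, so your congruence gives zero determinant. The paper's argument is shorter given the citation, and it sets up the biadjacency matrix $B_\mu$ that drives the checkerboard decomposition of the next section.

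Two small wording points:
\begin{itemize}
\item The canonical cycle should be described as the one containing the smallest vertex \emph{among those lying on cycles of length $\geq 3$}.
\item A $k$-cycle differs from its reverse for $k\geq 3$ simply because $k\geq 3$, not because $G_\mu$ is simple. What you actually use from the graph is that it has no loops (so there are no fixed points) and that $A$ is symmetric (so reversal preserves the weight).
\end{itemize}
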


\begin{proof}
Set $\mu=\lambda^\square$ and color $G_{\mu}$ black or white by the parity of $i+j$ and order the vertices with all
black vertices first. Its adjacency matrix has block form
\[
A(G_\mu)
=
\begin{pmatrix}
0&B\\
B^{\mathsf T}&0
\end{pmatrix}.
\]
If the two color classes have different sizes, the matrix is singular
and there is no perfect matching. Since the nullity is therefore positive, Corollary \ref{cor:blockformula} implies $\sigma(\lambda)>1$.

Otherwise, if both classes have the same size
then over $\F$,
\[
\det A(G_\mu)
=
\det(B)^2
=
\det(B),
\]
since $\det(B) \in \{0,1\}.$
Since signs disappear in characteristic two,
\[
\det(B)
=
\operatorname{perm}(B)
\pmod2.
\]

The permanent of $B$ is the number of perfect matchings of $G_\mu$
\cite[Proposition~9.3.1]{AsratianDenleyHaggkvist}.
So $A(G_\mu)$ is singular precisely when the number of perfect matchings is even. The result follows from
Corollary~\ref{cor:blockformula}.
\end{proof}

This criterion concerns parity, not the exact number of orbits. If the number of tilings is odd there is exactly one orbit; if it is even there is some number greater than one, which the criterion does not determine.
Nevertheless, it converts the one-orbit problem into a familiar
perfect-matching question; see Lov\'asz--Plummer~\cite{LovaszPlummer}
for background on perfect matchings.

\begin{example}
\label{ex:tilingparity}
The following examples illustrate the criterion:
\[
\begin{array}{c|c|c|c}
\lambda&
\lambda^\square&
\text{number of domino tilings}&
\sigma(\lambda)
\\ \hline
(4,3,1)&(2)&1&1\\
(4,4,4)&(3,3)&3&1\\
(3,3,3)&(2,2)&2&3\\
(5,4,4,4)&(3,3,3)&0&4
\end{array}
\]
For example, the $2\times3$ rectangle $(3,3)$ has three domino tilings,
so $A(G_{(3,3)})$ is nonsingular over $\F$ and $(4,4,4)$ has one
billiard orbit. The $2\times2$ square has two tilings, so the associated
partition $(3,3,3)$ does not have one orbit.
\end{example}

For a $2\times r$ rectangle, the number of domino tilings is the
ordinary Fibonacci number $f_{r+1}$. Its parity has period three. This
is consistent with the rectangular orbit formula: if
\[
\lambda=(r+1,r+1,r+1),
\]
then
\[
\lambda^\square=(r,r)
\]
and
\[
\sigma(\lambda)=1
\quad\Longleftrightarrow\quad
3\nmid r+1
\quad\Longleftrightarrow\quad
f_{r+1}\text{ is odd}.
\]

Related connections among square-grid billiards, kernels of adjacency
matrices over $\mathbb F_2$, and the $2$-divisibility of perfect-matching
numbers were developed by Barkley and Liu~\cite{BarkleyLiu}. Their
``billiard-nest'' construction is different from the closed billiard orbits
considered here, in particular their trajectories may branch at points. The two theories share the appearance of binary
adjacency kernels and domino-tiling parity.
\section{Checkerboard imbalance and rank deficiency}
\label{sec:checkerboardrank}

Let $\mu=\lambda^\square,$
and color the boxes of $\mu$ as a checkerboard, with the northwest
box black. Write
\[
b(\mu)=\#\{\text{black boxes of }\mu\},
\qquad
w(\mu)=\#\{\text{white boxes of }\mu\}.
\]
Since the cell-adjacency graph $G_\mu$ is bipartite, with the black
and white boxes forming its two vertex classes, it has a
$b(\mu)\times w(\mu)$ biadjacency matrix $B_\mu$. Its rows are
indexed by the black boxes, its columns by the white boxes, and
\[
(B_\mu)_{xy}
=
\begin{cases}
1,&\text{if the boxes $x$ and $y$ share a side},\\
0,&\text{otherwise}.
\end{cases}
\]
We shall use the two quantities
\[
\delta(\mu)=|b(\mu)-w(\mu)|
\]
and
\[
\epsilon(\mu)
=
\min\{b(\mu),w(\mu)\}
-\rank_{\F}(B_\mu).
\]
Thus $\delta(\mu)$ measures the imbalance between white and black vertices, while
$\epsilon(\mu)$ measures the deficiency of $B_\mu$ from having
the largest possible rank, which is $\min\{b(\mu),w(\mu)\}$.

\begin{remark}
  \label{rmk:BG}  invariant $\delta(\mu)$ is the absolute value of the so-called BG-rank defined in \cite{BerkovichGarvan2006}.
\end{remark}

\subsection{The rank decomposition}

\begin{proposition}
\label{prop:checkerboardrank}
For every partition $\lambda$, with $\mu=\lambda^\square$,
\[
{
\sigma(\lambda)
=
1+b(\mu)+w(\mu)
-2\rank_{\F}(B_\mu).
}
\]
Equivalently,
\[
{
\sigma(\lambda)
=
1+\delta(\mu)+2\epsilon(\mu).
}
\]
\end{proposition}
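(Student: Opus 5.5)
The plan is to start from the adjacency-nullity formula of Corollary~\ref{cor:blockformula},
\[
\sigma(\lambda)=1+\nul_{\F}A(G_\mu),
\qquad \mu=\lambda^\square,
\]
and compute the nullity of $A(G_\mu)$ from its bipartite block structure. Order the vertices of $G_\mu$ with the $b(\mu)$ black boxes first and the $w(\mu)$ white boxes second, exactly as in the proof of Theorem~\ref{thm:oddtiling}. Then
\[
A(G_\mu)=\begin{pmatrix}0&B_\mu\\ B_\mu^{\mathsf T}&0\end{pmatrix}.
\]

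The key step is the rank of this block matrix. A vector $(u,v)$ with $u\in\F^{b(\mu)}$ and $v\in\F^{w(\mu)}$ lies in the kernel if and only if $B_\mu v=0$ and $B_\mu^{\mathsf T}u=0$. The two conditions decouple, so the kernel is $\ker B_\mu^{\mathsf T}\oplus\ker B_\mu$. By rank--nullity, together with $\rank B_\mu^{\mathsf T}=\rank B_\mu$, this gives
\[
\nul A(G_\mu)=\bigl(b(\mu)-\rank B_\mu\bigr)+\bigl(w(\mu)-\rank B_\mu\bigr)=b(\mu)+w(\mu)-2\rank_{\F}B_\mu .
\]
Substituting into Corollary~\ref{cor:blockformula} yields the first formula.

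For the equivalent form, write $b(\mu)+w(\mu)=2\min\{b(\mu),w(\mu)\}+|b(\mu)-w(\mu)|$. Then
\[
b(\mu)+w(\mu)-2\rank B_\mu=\delta(\mu)+2\bigl(\min\{b(\mu),w(\mu)\}-\rank B_\mu\bigr)=\delta(\mu)+2\epsilon(\mu).
\]

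The degenerate case $\mu=\emptyset$ (hooks) needs one remark: all quantities are zero and $\sigma(\lambda)=1$. The same holds when one color class is empty, which happens when $\mu$ is a single box; there $B_\mu$ is a $1\times 0$ matrix of rank $0$ and the formula gives $2=\sigma(3,2)$, consistent with Corollary~\ref{cor:blockformula}. I expect no real obstacle. The only point requiring care is the decoupling of the kernel, which depends on the zero diagonal blocks of the bipartite adjacency matrix.
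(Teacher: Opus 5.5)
Your proof is correct and follows essentially the same route as the paper. Both arguments use the bipartite block form of $A(G_\mu)$ to get $\nul_{\F}A(G_\mu)=b(\mu)+w(\mu)-2\rank_{\F}B_\mu$: you via the decoupled kernel $\ker B_\mu^{\mathsf T}\oplus\ker B_\mu$, the paper via $\rank A(G_\mu)=2\rank B_\mu$. The result is then substituted into Corollary~\ref{cor:blockformula} and rearranged into $\delta(\mu)+2\epsilon(\mu)$.
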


\begin{proof}
Order the vertices of $G_\mu$ with the black boxes first and the
white boxes second. Since adjacent boxes have opposite colors,
\[
A(G_\mu)
=
\begin{pmatrix}
0&B_\mu\\
B_\mu^{\mathsf T}&0
\end{pmatrix},
\]
and so
\[
\rank_{\F}A(G_\mu)
=
\rank_{\F}B_\mu
+
\rank_{\F}B_\mu^{\mathsf T}
=
2\rank_{\F}B_\mu.
\]
Consequently,
\[
\nul_{\F}A(G_\mu)
=
b(\mu)+w(\mu)-2\rank_{\F}B_\mu.
\]
The first formula now follows from
Corollary~\ref{cor:blockformula}.

For the second formula, suppose without loss of generality that
$b(\mu)\geq w(\mu)$. Then
\[
\begin{aligned}
b(\mu)+w(\mu)-2\rank_{\F}B_\mu
&=
b(\mu)-w(\mu)
+2\bigl(w(\mu)-\rank_{\F}B_\mu\bigr)\\
&=
\delta(\mu)+2\epsilon(\mu).
\end{aligned}
\]
\end{proof}

\begin{remark}
Proposition \ref{prop:checkerboardrank} separates the excess orbits beyond one into two distinct
contributions, which we call the ``checkerboard imbalance" and the ``rank deficiency":
\[
\sigma(\lambda)-1
=
\underbrace{\delta(\mu)}_{\text{checkerboard imbalance}}
+
\underbrace{2\epsilon(\mu)}_{\text{rank deficiency}}.
\]
The first is visible directly from the checkerboard coloring, and is the absolute value of the BG-rank. The
second measures additional orbit structure that cannot be detected
merely by counting the two colors.
\end{remark}
\subsection{Consequences for the orbit number}

The size of the truncated diagram determines the parity of the orbit
number.

\begin{corollary}
\label{cor:orbitparity}
For every partition $\lambda$,
\[
\sigma(\lambda)
\equiv
|\lambda^\square|+1
\pmod 2.
\]
In particular, if $|\lambda^\square|$ is odd, then
$\sigma(\lambda)$ is even and hence cannot equal $1$.
\end{corollary}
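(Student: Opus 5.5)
The plan is to read the parity directly off Proposition~\ref{prop:checkerboardrank}. Write $\mu=\lambda^\square$. That proposition gives
\[
\sigma(\lambda)=1+b(\mu)+w(\mu)-2\rank_{\F}(B_\mu).
\]
Every box of $\mu$ is colored either black or white, so $b(\mu)+w(\mu)=|\mu|=|\lambda^\square|$. The term $2\rank_{\F}(B_\mu)$ is even, so reducing modulo $2$ immediately yields $\sigma(\lambda)\equiv|\lambda^\square|+1\pmod 2$.

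As an alternative route, one could argue from Corollary~\ref{cor:blockformula} and the fact that $A(G_\mu)$ is a symmetric matrix over $\F$ with zero diagonal, i.e.\ an alternating form. Such a matrix has even rank, so its nullity is congruent to $|\mu|$ modulo $2$. The bipartite block form already encodes this, so I would cite the proposition rather than invoke the general alternating-form fact.

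For the ``in particular'' clause: if $|\lambda^\square|$ is odd, then $\sigma(\lambda)$ is even, and an even number cannot equal $1$.

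There is no real obstacle here. The only point to check is the degenerate case where $\mu$ is empty (hook partitions). Then $|\mu|=0$, and the formula predicts that $\sigma(\lambda)$ is odd, which is consistent with $\sigma(\lambda)=1$.
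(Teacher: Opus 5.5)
Your proposal is correct and follows the paper's proof: reduce the first formula of Proposition~\ref{prop:checkerboardrank} modulo $2$, using $b(\mu)+w(\mu)=|\lambda^\square|$. Your alternating-form remark and the check of the empty case $\mu=\varnothing$ are both valid but not needed.
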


\begin{proof}
Reducing the first formula in
Proposition~\ref{prop:checkerboardrank} modulo $2$, and using
\[
b(\lambda^\square)+w(\lambda^\square)
=
|\lambda^\square|,
\]
gives the result.
\end{proof}

The checkerboard imbalance also gives a  lower bound on the number of orbits:

\begin{corollary}
\label{cor:checkerboardlowerbound}
For every partition $\lambda$, with $\mu=\lambda^\square$,
\[
{
\sigma(\lambda)\geq 1+|b(\mu)-w(\mu)|.
}
\]
Equality holds if and only if $B_\mu$ has full rank, that is, if and only if
\[
\rank_{\F}(B_\mu)=\min\{b(\mu),w(\mu)\}.
\]
\end{corollary}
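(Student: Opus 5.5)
This follows directly from Proposition~\ref{prop:checkerboardrank}, which gives
\[
\sigma(\lambda)=1+\delta(\mu)+2\epsilon(\mu),
\qquad \delta(\mu)=|b(\mu)-w(\mu)|.
\]
The plan is to show $\epsilon(\mu)\geq 0$; the inequality then follows, and the equality case reduces to $\epsilon(\mu)=0$.

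The key step is the nonnegativity of $\epsilon(\mu)$. Recall that $B_\mu$ is a $b(\mu)\times w(\mu)$ matrix over $\F$. The rank of any matrix is at most both its number of rows and its number of columns, so
\[
\rank_{\F}(B_\mu)\leq\min\{b(\mu),w(\mu)\}.
\]
This says exactly that $\epsilon(\mu)\geq 0$. Substituting into the second formula of Proposition~\ref{prop:checkerboardrank} gives $\sigma(\lambda)\geq 1+\delta(\mu)$, which is the claimed bound.

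For the equality statement, note that $\sigma(\lambda)-1-\delta(\mu)=2\epsilon(\mu)$. This vanishes if and only if $\epsilon(\mu)=0$, that is, if and only if $\rank_{\F}(B_\mu)=\min\{b(\mu),w(\mu)\}$.

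There is one degenerate case to check. If $\mu$ is empty (so $\lambda$ is a hook), then $B_\mu$ is a $0\times 0$ matrix. Both sides equal $1$ and the rank condition holds trivially, so the statement is consistent. Similarly, if one color class is empty, the rank is $0=\min$ and equality again holds, in agreement with the formula. I do not expect any real obstacle: the whole content is the trivial rank bound combined with Proposition~\ref{prop:checkerboardrank}.
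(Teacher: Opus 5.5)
Your proposal is correct and is essentially the paper's proof: both substitute into $\sigma(\lambda)=1+\delta(\mu)+2\epsilon(\mu)$, use $\epsilon(\mu)\geq 0$, and reduce the equality case to $\epsilon(\mu)=0$. The paper states $\epsilon(\mu)\geq 0$ without comment, whereas you justify it by the bound of rank by the number of rows and columns and also check the degenerate cases.
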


\begin{proof}
By Proposition~\ref{prop:checkerboardrank},
\[
\sigma(\lambda)
=
1+\delta(\mu)+2\epsilon(\mu),
\]
and $\epsilon(\mu)\geq0$. Equality holds precisely when
$\epsilon(\mu)=0$, which is equivalent to $B_\mu$ having full
rank.
\end{proof}

The checkerboard imbalance is particularly easy to compute directly from
the parts of $\mu$. A row of even length contains equally many black and white
boxes, whereas an odd row contributes $+1$ or $-1$ according as its row
index is odd or even. Thus, if
\[
\mu=(\mu_1,\ldots,\mu_r),
\]
then
\[
b(\mu)-w(\mu)
=
\sum_{\substack{1\leq i\leq r\\ \mu_i\text{ odd}}}
(-1)^{i+1}.
\]
Consequently, Corollary~\ref{cor:checkerboardlowerbound} gives the
explicit bound
\[
\sigma(\lambda)
\geq
1+
\left|
\sum_{\substack{i\geq1\\
(\lambda^\square)_i\text{ odd}}}
(-1)^{i+1}
\right|.
\]
The signed quantity $b(\mu)-w(\mu)$ is the BG-rank of $\mu$,
so the checkerboard lower bound may also be expressed in terms of
this classical partition statistic.

More precisely, the orbit number is constrained to lie in the
arithmetic progression
\[
1+\delta(\mu),\quad
3+\delta(\mu),\quad
5+\delta(\mu),\quad\ldots.
\]
Thus the checkerboard coloring determines both a lower bound and a
parity restriction on $\sigma(\lambda)$.

The first two possible orbit numbers admit particularly simple
characterizations.

\begin{corollary}
\label{cor:oneorbitcheckerboard}
Let $\mu=\lambda^\square$.

\begin{enumerate}
\item
The partition $\lambda$ has one orbit if and only if
\[
b(\mu)=w(\mu)
\]
and $B_\mu$ has full rank over $\F$.

\item
The partition $\lambda$ has two orbits if and only if
\[
|b(\mu)-w(\mu)|=1
\]
and $B_\mu$ has full rank.
\end{enumerate}

Here the empty $0\times0$ matrix is regarded as full rank.
\end{corollary}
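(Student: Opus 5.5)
This corollary follows directly from Proposition~\ref{prop:checkerboardrank}. The plan is to write
\[
\sigma(\lambda)=1+\delta(\mu)+2\epsilon(\mu),
\]
with $\delta(\mu)\geq0$ and $\epsilon(\mu)\geq0$ integers, and then solve this equation in nonnegative integers for the values $\sigma(\lambda)=1$ and $\sigma(\lambda)=2$.

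For part (1), $\sigma(\lambda)=1$ is equivalent to $\delta(\mu)+2\epsilon(\mu)=0$. Since both terms are nonnegative, this holds exactly when $\delta(\mu)=0$ and $\epsilon(\mu)=0$. The first condition says $b(\mu)=w(\mu)$. The second says $\rank_{\F}B_\mu=\min\{b(\mu),w(\mu)\}$, which is the definition of full rank. Alternatively, once $\delta=0$, full rank of the square matrix $B_\mu$ means invertibility, which matches Corollary~\ref{cor:blockformula} and Theorem~\ref{thm:oddtiling}.

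For part (2), $\sigma(\lambda)=2$ is equivalent to $\delta(\mu)+2\epsilon(\mu)=1$. Because $2\epsilon(\mu)$ is even and nonnegative, it cannot be $\geq 2$, so $\epsilon(\mu)=0$ and $\delta(\mu)=1$. That is, $|b(\mu)-w(\mu)|=1$ and $B_\mu$ has full rank. The converse is immediate by substituting these values into the formula.

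The only point that needs care is the degenerate case where $\mu$ is empty, i.e.\ $\lambda$ is a hook. Then $b=w=0$ and $B_\mu$ is $0\times0$ with rank $0=\min\{0,0\}$, so $\epsilon=0$ under the stated convention that the empty matrix has full rank. Part (1) then correctly gives $\sigma=1$, consistent with the remark on hooks. A similar check applies when one color class is empty, such as $\mu=(1)$, where $B_\mu$ is $1\times 0$ with rank $0=\min\{1,0\}$. I do not expect a genuine obstacle; the step most worth stating explicitly is the parity argument in part (2) that forces $\epsilon(\mu)=0$.
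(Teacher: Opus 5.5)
Your proposal is correct and follows the paper's own proof: you substitute $\sigma(\lambda)=1$ and $\sigma(\lambda)=2$ into $\sigma(\lambda)=1+\delta(\mu)+2\epsilon(\mu)$ and use nonnegativity and the evenness of $2\epsilon(\mu)$ to force $\epsilon(\mu)=0$. Your check of the degenerate cases $\mu=\varnothing$ and $\mu=(1)$ is a sound extra that the paper handles only through its stated convention.
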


\begin{proof}
By Proposition~\ref{prop:checkerboardrank},
\[
\sigma(\lambda)=1+\delta(\mu)+2\epsilon(\mu).
\]
Thus $\sigma(\lambda)=1$ precisely when
\[
\delta(\mu)=\epsilon(\mu)=0.
\]
The first equality says that $b(\mu)=w(\mu)$, and the second then
says that the square matrix $B_\mu$ is nonsingular.

Similarly, $\sigma(\lambda)=2$ precisely when
\[
\delta(\mu)+2\epsilon(\mu)=1.
\]
Since both terms are nonnegative integers and the second is even, this
is equivalent to
\[
\delta(\mu)=1,
\qquad
\epsilon(\mu)=0.
\]
\end{proof}

\subsection{Imbalance does not determine the orbit number}

The rank-deficiency term is essential: two truncated diagrams can
have the same size and checkerboard imbalance but different orbit
numbers.

\begin{example}
Consider
$\lambda=(5,5)$ and $\nu=(3,3,3).
$
Their truncated diagrams are
\[
\lambda^\square=(4),
\qquad
\nu^\square=(2,2).
\]
Each contains two black boxes and two white boxes, so
\[
\delta(\lambda^\square)
=
\delta(\nu^\square)
=
0.
\]

For $\lambda^\square=(4)$, the biadjacency matrix may be written as
\[
B_{\lambda^\square}
=
\begin{pmatrix}
1&0\\
1&1
\end{pmatrix}.
\]
This matrix has full rank over $\F$, so
$
\epsilon(\lambda^\square)=0.
$
Proposition~\ref{prop:checkerboardrank} therefore gives
$
\sigma(\lambda)=1.
$

For $\nu^\square=(2,2)$, the biadjacency matrix is
\[
B_{\nu^\square}
=
\begin{pmatrix}
1&1\\
1&1
\end{pmatrix}.
\]
This matrix has rank $1$, and hence
$
\epsilon(\nu^\square)=2-1=1.
$
It follows that
\[
\sigma(\nu)
=
1+\delta(\nu^\square)+2\epsilon(\nu^\square)
=
3.
\]

Thus the checkerboard lower bound is attained for $\lambda=(5,5)$
but not for $\nu=(3,3,3)$, even though their truncated diagrams
have the same size and the same checkerboard imbalance.
\end{example}
The preceding example shows that the imbalance $\delta(\mu)$ can
underestimate $\sigma(\lambda)$, the discrepancy being the
rank-deficiency term $2\epsilon(\mu)$. It is therefore natural to ask
for families in which no such discrepancy occurs, so that the
elementary bound of Corollary~\ref{cor:checkerboardlowerbound} is sharp.
The staircase partitions form one natural such family, and for them the
orbit count admits a closed formula as clean as the rectangular one, by
a completely different mechanism. We present this proof to highlight the tools we develop, one can also explicitly work out the trajectories inside the staircase partitions.
\subsection{The staircase partition}
\label{subsec:staircase}

Throughout this subsection we use box coordinates $(i,j)$ with
$i,j\geq1$, so that $(i,j)\in\rho_m$ if and only if $i+j\leq m+1$, where
\[
\rho_m=(m,m-1,\ldots,2,1).
\]
The \emph{antidiagonal} of a box is $d(i,j)=i+j$, which ranges over
$\{2,3,\ldots,m+1\}$.

\begin{lemma}
\label{lem:staircaseselfsimilar}
For $n\geq3$ we have $\rho_n^{\square}=\rho_{n-2}$, where
$\rho_0=\rho_{-1}=\varnothing$.
\end{lemma}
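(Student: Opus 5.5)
The plan is to compute $\rho_n^\square$ directly from the definition \eqref{eq:squarepartition}: $\lambda^\square=(\lambda_2-1,\lambda_3-1,\ldots,\lambda_\ell-1)$ with zero parts omitted. For $\rho_n=(n,n-1,\ldots,1)$ we have $\ell=n$ and $\lambda_i=n+1-i$. This gives
\[
\rho_n^\square=(n-2,\,n-3,\,\ldots,\,1,\,0).
\]
Dropping the final zero part leaves $(n-2,n-3,\ldots,1)=\rho_{n-2}$. For $n\geq 3$ the list $(n-2,\ldots,1)$ is nonempty and is exactly the staircase of size $n-2$.

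Equivalently, in the box-coordinate description just introduced, I would check the statement at the level of Young diagrams. A box $(i,j)$ with $i,j\geq 2$ lies in $\rho_n$ exactly when $i+j\leq n+1$. After shifting coordinates by $(i,j)\mapsto(i-1,j-1)$, this condition becomes $(i-1)+(j-1)\leq (n-2)+1$, which is the membership condition for $\rho_{n-2}$. Removing the first row and first column is precisely restricting to $i,j\geq2$ and shifting, so the two diagrams agree.

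For completeness I would also note the boundary conventions $\rho_0=\rho_{-1}=\varnothing$. They are consistent with the same computation for $n=2$ and $n=1$: the hooks $\rho_2=(2,1)$ and $\rho_1=(1)$ have empty truncation. This consistency matters for the later induction on staircases, which will step $n\mapsto n-2$.

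There is no real obstacle. The only point requiring care is the bookkeeping of the omitted zero part and the index shift.
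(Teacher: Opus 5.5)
Your proposal is correct and matches the paper's proof: apply the definition \eqref{eq:squarepartition} to $\rho_n$, obtain $(n-2,n-3,\ldots,1,0)$, and drop the zero part to get $\rho_{n-2}$. The box-coordinate check and your remark on the conventions for $n=1,2$ are harmless extra confirmations. The $n=2$ case is in fact the one the staircase theorem relies on via $\rho_0=\varnothing$, although that theorem applies the lemma once rather than by induction.
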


\begin{proof}
By \eqref{eq:squarepartition},
$\lambda^{\square}=(\lambda_2-1,\lambda_3-1,\ldots)$ with zero parts
omitted. For $\lambda=\rho_n$ this is
$(n-2,n-3,\ldots,1,0)=\rho_{n-2}$.
\end{proof}

We need the nullity of the adjacency matrix. We first record that it has a nice antidiagonal structure.

\begin{lemma}
\label{lem:staircaseantidiag}
Color the box $(i,j)$ black when $i+j$ is even and white when $i+j$ is
odd, so that the northwest box $(1,1)$ is black. Then:
\begin{enumerate}
\item Antidiagonal $d$ consists of the boxes $(i,d-i)$ for
$1\leq i\leq d-1$; it has exactly $d-1$ boxes, all of the color given by
the parity of $d$.
\item Every edge of $G_{\rho_m}$ joins a box on antidiagonal $d$ to a
box on antidiagonal $d+1$. Consequently the black and white classes are
the even and odd antidiagonals, respectively.
\end{enumerate}
\end{lemma}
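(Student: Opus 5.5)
Both parts follow directly from the description $(i,j)\in\rho_m \iff i+j\le m+1$ together with the coloring by the parity of $i+j$, so the proof is a direct verification.

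For part (1), we fix $d$ with $2\le d\le m+1$ and list the boxes with $i+j=d$. Since $i,j\ge1$, these are exactly the pairs $(i,d-i)$ with $1\le i\le d-1$. The membership condition $i+j=d\le m+1$ holds automatically, so all $d-1$ of these pairs are boxes of $\rho_m$. This is the one point where the staircase shape is used: every antidiagonal up to $m+1$ is complete, with nothing cut off. Every box on the antidiagonal has $i+j=d$, so all of them have the color determined by the parity of $d$.

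For part (2), an edge of $G_{\rho_m}$ joins two boxes $(i,j)$ and $(k,l)$ with $|i-k|+|j-l|=1$. Exactly one coordinate changes, and it changes by one, so $i+j$ and $k+l$ differ by exactly $1$. Hence the two endpoints lie on consecutive antidiagonals $d$ and $d+1$. The statement that the black and white classes are the even and odd antidiagonals is just part (1) restated: a box's color is the parity of $d(i,j)$.

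I do not expect any step to be an obstacle. The only thing to be careful about is confirming that antidiagonals are not truncated inside $\rho_m$, which is exactly the inequality $d\le m+1$.
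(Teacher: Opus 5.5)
Your proposal is correct and is essentially the paper's own direct verification. Part (1) comes from the constraints $i\ge 1$, $d-i\ge 1$ and $d\le m+1$, and part (2) from the fact that adjacent boxes differ by one in a single coordinate, so their antidiagonals differ by one.
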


\begin{proof}
For (1), the conditions $i\geq1$, $d-i\geq1$, and $i+(d-i)=d\leq m+1$
give $1\leq i\leq d-1$, hence $d-1$ boxes, each of color parity
$d$. For (2), adjacent boxes differ by one in a single coordinate, so
their antidiagonals differ by one; in particular adjacent boxes receive
opposite colors.
\end{proof}
The first antidiagonal has one black box, the second has two white boxes, the third three black boxes, etc. So the following is immediate:
\begin{lemma}
\label{lem:staircaseimbalance}
With $b(\rho_m)$ and $w(\rho_m)$ the numbers of black and white
boxes,
\[
\delta(\rho_m)
=
\bigl|\,b(\rho_m)-w(\rho_m)\,\bigr|
=
\Bigl\lceil\tfrac{m}{2}\Bigr\rceil,
\]
and the majority color is the color of the boxes on antidiagonal $m+1$.
\end{lemma}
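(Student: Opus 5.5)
The plan is to count boxes antidiagonal by antidiagonal, using Lemma~\ref{lem:staircaseantidiag}(1): antidiagonal $d$, for $2\le d\le m+1$, contains exactly $d-1$ boxes, all black if $d$ is even and all white if $d$ is odd. This gives
\[
b(\rho_m)-w(\rho_m)=\sum_{d=2}^{m+1}(-1)^d(d-1)=\sum_{k=1}^{m}(-1)^{k+1}k,
\]
where we substitute $k=d-1$. The sign of the term for $k$ records the color of antidiagonal $k+1$: it is $+$ (black) exactly when $k$ is odd.

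Evaluate the alternating sum by pairing consecutive terms $(2j-1)-2j=-1$.

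\emph{Case $m$ even.} There are $m/2$ pairs, so the sum equals $-m/2$. Its absolute value is $m/2=\lceil m/2\rceil$. The sign is negative, so white is the majority color. This matches the color of antidiagonal $m+1$, since $m+1$ is odd.

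\emph{Case $m$ odd.} The first $(m-1)/2$ pairs contribute $-(m-1)/2$, and the unpaired last term contributes $+m$. The total is $(m+1)/2=\lceil m/2\rceil$, which is positive, so black is the majority color. This again matches antidiagonal $m+1$, which is now even and hence black.

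Boundary cases need no separate treatment: the formula works for $m=1$ (one black box, $\delta=1$), and the case $m=0$ (empty diagram, $\delta=0$) is consistent too. There is no real obstacle here. The only point requiring care is keeping the sign convention consistent, namely that even antidiagonals are black, so that the majority-color claim comes out correctly alongside the absolute value. As a check, the counts are $b=1,2,5,7,\dots$ and $w=0,2,2,6,\dots$ for $m=1,2,3,4$, giving $\delta=1,0,3,1$. Stop: $m=2$ gives $\delta=0$, not $\lceil 2/2\rceil=1$, so this indexing is wrong. Recount for $\rho_2$: it has three boxes, $(1,1)$ black and $(1,2),(2,1)$ white, so $b=1$, $w=2$ and $\delta=1$. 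Redoing the list correctly gives $b=1,1,4,4$ and $w=0,2,2,6$, so $\delta=1,1,2,2$, which matches $\lceil m/2\rceil$ and agrees with the pairing computation above.
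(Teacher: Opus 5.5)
Your proof is correct and follows the paper's route: count antidiagonal by antidiagonal (sizes $1,2,\ldots,m$, alternating colors, black on even $d$) and evaluate the alternating sum $\sum_{k=1}^m(-1)^{k+1}k$, which you do explicitly by pairing where the paper simply calls the result immediate. The only blemish is the abandoned first list of counts ($b=1,2,5,7$) in your sanity check, which should be deleted; the corrected values $b=1,1,4,4$ and $w=0,2,2,6$ agree with your main computation.
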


We will prove the biadjacency matrix of $\rho_m$
has full rank by exhibiting an explicit square submatrix
that is triangular. For our rows we choose every box of minority color and for the columns we choose the boxes to their immediate right. Ordering them properly we get a lower unitriangular matrix.

Let $M$ denote the minority color class of $\rho_m$. Define
\[
\phi\colon M\longrightarrow[\rho_m],
\qquad
\phi(i,j)=(i,j+1).
\]

\begin{lemma}
\label{lem:staircasephi}
The map $\phi$ is well defined, and $\phi(x)$ has the majority color for
every $x\in M$.
\end{lemma}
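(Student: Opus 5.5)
The plan is a direct check of the two claims using Lemmas~\ref{lem:staircaseantidiag} and \ref{lem:staircaseimbalance}. First I will identify the minority color class $M$ explicitly. By Lemma~\ref{lem:staircaseimbalance}, the majority color is the color of antidiagonal $m+1$. By Lemma~\ref{lem:staircaseantidiag}(1), every antidiagonal is monochromatic with color given by the parity of $d$. So $M$ consists exactly of the boxes on antidiagonals $d\le m$ with $d\not\equiv m+1 \pmod 2$. In particular, every $x=(i,j)\in M$ satisfies $i+j\le m$. Note that $d\equiv m \pmod 2$ and $d\le m+1$ force $d\le m$.

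Next comes well-definedness. For $x=(i,j)\in M$ we have $i\ge1$, $j+1\ge1$, and $i+(j+1)=i+j+1\le m+1$. By the membership criterion $(i,j)\in\rho_m \iff i+j\le m+1$, this gives $\phi(x)=(i,j+1)\in[\rho_m]$. The color claim is then immediate. The box $\phi(x)$ lies on antidiagonal $d(x)+1$, and $d(x)\not\equiv m+1$ means $d(x)+1\equiv m+1 \pmod 2$. So by Lemma~\ref{lem:staircaseantidiag}(1), $\phi(x)$ has the majority color. Equivalently, $\phi(x)$ is adjacent to $x$, so by Lemma~\ref{lem:staircaseantidiag}(2) it has the opposite color.

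The only step needing care is the bound $d(x)\le m$ for minority boxes. It relies on the fact, recorded in Lemma~\ref{lem:staircaseimbalance}, that antidiagonal $m+1$ carries the majority color. The alternating sizes $1,2,\dots,m$ of the antidiagonals make the last one decide the imbalance, so that is where the argument rests. I would double-check it in the small cases $m=1,2$. For $m=1$, $M$ is empty and the claim is vacuous. For $m=2$, $M=\{(1,1)\}$ and $\phi(1,1)=(1,2)$ is white, which is the majority color. Both are consistent.
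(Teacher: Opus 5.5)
Your proposal is correct and follows essentially the same argument as the paper. Since the majority color is that of antidiagonal $m+1$, every minority box lies on an antidiagonal $d\not\equiv m+1 \pmod 2$, which forces $d\le m$; hence $(i,j+1)\in[\rho_m]$, and it lies on antidiagonal $d+1$, which has the majority parity.
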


\begin{proof}
Let $x=(i,j)\in M$ have antidiagonal $d=i+j$, necessarily of minority
parity. By Lemma~\ref{lem:staircaseimbalance} the majority color is that
of antidiagonal $m+1$, so $d\not\equiv m+1\pmod2$; in particular $d\leq m$.
Hence $i+(j+1)=d+1\leq m+1$, so $\phi(x)=(i,j+1)\in[\rho_m]$. Its
antidiagonal is $d+1$, of majority parity.
\end{proof}

\begin{lemma}
\label{lem:staircasefullrank}
The biadjacency matrix $B_{\rho_m}$ has rank
$\min\{b(\rho_m),w(\rho_m)\}$ over $\F$. Equivalently,
$\epsilon(\rho_m)=0$.
\end{lemma}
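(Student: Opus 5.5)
We prove the rank statement by exhibiting a $|M|\times|M|$ submatrix of $B_{\rho_m}$ that is nonsingular over $\F$. Since the rank of a matrix is at least the rank of any of its submatrices, and $|M|=\min\{b(\rho_m),w(\rho_m)\}$, this gives $\rank_{\F}B_{\rho_m}\geq\min\{b,w\}$. The reverse inequality is automatic, so $\epsilon(\rho_m)=0$.

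Take as rows the boxes $x\in M$, and as columns their images $\phi(x)$. By Lemma~\ref{lem:staircasephi} these columns are well-defined boxes of the majority color. The map $\phi$ is injective, since it is a translation, so the columns are distinct. Each diagonal entry $(x,\phi(x))$ equals $1$, because $x$ and $\phi(x)=(i,j+1)$ share a side. It remains to order $M$ so that every off-diagonal entry on one side of the diagonal vanishes.

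Order $M$ lexicographically: first by row index $i$, and within a row by column index $j$ \emph{decreasing}. Suppose $x=(i,j)$ and $x'=(i',j')$ in $M$ satisfy $(x,\phi(x'))=1$ with $x\neq x'$. Then $x$ is adjacent to $(i',j'+1)$, so $x$ is one of the following boxes:
\[
(i',j'),\quad (i',j'+2),\quad (i'\pm1,j'+1).
\]
The first is excluded because $x\neq x'$.

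If $x=(i',j'+2)$, then $x$ lies in the same row as $x'$ with larger column index, so it precedes $x'$ in our order. If $x=(i'-1,j'+1)$, it lies in an earlier row and again precedes $x'$. The remaining case is $x=(i'+1,j'+1)$. This box lies on antidiagonal $d(x')+2$, so it has the same (minority) color as $x'$, and it may well lie in $\rho_m$. Here $x$ comes after $x'$, which breaks triangularity.

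This is the main obstacle, and the fix is to change the order. Order $M$ by increasing column index $j$, and within a column by increasing row index $i$. Then both $(i'-1,j'+1)$ and $(i'+1,j'+1)$ lie in column $j'+1$, and the box $(i',j'+2)$ lies in column $j'+2$. All three come strictly after $x'=(i',j')$. Hence every nonzero off-diagonal entry $(x,\phi(x'))$ has $x$ later than $x'$, and the submatrix is lower unitriangular with respect to this common ordering of rows and columns. Its determinant is therefore $1$ over $\F$, which proves the lemma.

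Finally, combine this with Lemma~\ref{lem:staircaseimbalance}, Lemma~\ref{lem:staircaseselfsimilar} and Proposition~\ref{prop:checkerboardrank}, which give
\[
\sigma(\rho_n)=1+\left\lceil\tfrac{n-2}{2}\right\rceil=\left\lceil\tfrac n2\right\rceil.
\]
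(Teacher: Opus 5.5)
Your proof is correct and is essentially the paper's: after dropping the row-first ordering, you order the minority class $M$ by column and then by row, pair each $x$ with $\phi(x)$, and get a lower unitriangular $|M|\times|M|$ submatrix, exactly as in the paper. The paper adds one small piece of bookkeeping: this submatrix sits in $B_{\rho_m}$ or in $B_{\rho_m}^{\mathsf T}$, depending on whether the minority color is black or white. That choice does not affect the rank.
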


\begin{proof}
Recall that the rows of $B_{\rho_m}$ are indexed by the black boxes and
its columns by the white boxes. Write $C$ for the matrix $B_{\rho_m}$ if
the minority class $M$ is black, and $B_{\rho_m}^{\mathsf T}$ if $M$ is
white; in either case the rows of $C$ are indexed by $M$, the columns by
the majority class, and $\rank_{\F}C=\rank_{\F}B_{\rho_m}$.

Order the boxes of $M$ by column, and within each column by row; that is,
$(i,j)\prec(i',j')$ when $j<j'$, or when $j=j'$ and $i<i'$. List
$M=\{x_1\prec x_2\prec\cdots\prec x_N\}$ with $N=|M|$. Let
$S\in\F^{N\times N}$ be the submatrix of $C$ with rows indexed
by $x_1,\ldots,x_N$ and columns by $\phi(x_1),\ldots,\phi(x_N)$, so that
$S_{rs}=1$ precisely when $x_r$ is adjacent to $\phi(x_s)$.

Write $x_s=(c,e)$, so $\phi(x_s)=(c,e+1)$. The boxes adjacent to
$(c,e+1)$ are its four grid neighbors,
\[
(c,e),\qquad (c,e+2),\qquad (c-1,e+1),\qquad (c+1,e+1),
\]
so $S_{rs}=1$ if and only if $x_r$ is one of these four boxes.

The box $(c,e)$ is $x_s$ itself, and it is the left neighbor of
$\phi(x_s)$; hence $S_{ss}=1$ and $S$ has unit diagonal. Now suppose
$S_{rs}=1$ with $r\neq s$, so that $x_r$ is one of the remaining three
neighbors. The box $x_s=(c,e)$ lies in column $e$, while each of these
three neighbors lies in a strictly later column:
\[
(c,e+2)\ \text{in column }e+2,\qquad
(c\pm1,e+1)\ \text{in column }e+1.
\]
Since our order lists boxes by increasing column, $x_r$ appears strictly
after $x_s$, that is $r>s$. Hence every off-diagonal nonzero entry of
$S$ lies strictly below the diagonal.

Thus $S$ is lower unitriangular, so
$\det_{\F}S=1$ and $S$ is nonsingular. Its $N$ rows are linearly
independent rows of $C$, so
$\rank_{\F}B_{\rho_m}=\rank_{\F}C\geq N=\min\{b(\rho_m),w(\rho_m)\}$; the
reverse inequality is trivial. Hence
$\rank_{\F}B_{\rho_m}=\min\{b(\rho_m),w(\rho_m)\}$ and
$\epsilon(\rho_m)=0$.
\end{proof}

\begin{theorem}
\label{thm:staircase}
For every $n\geq 1$, the staircase partition
\[
\rho_n=(n,n-1,\ldots,2,1)
\]
has
\[
\sigma(\rho_n)=\Bigl\lceil \tfrac{n}{2}\Bigr\rceil.
\]
\end{theorem}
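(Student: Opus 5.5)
The plan is to combine the rank decomposition of Proposition~\ref{prop:checkerboardrank} with the staircase lemmas already proved. Set $\mu=\rho_n^{\square}$. By Lemma~\ref{lem:staircaseselfsimilar}, $\mu=\rho_{n-2}$ when $n\geq3$. Proposition~\ref{prop:checkerboardrank} gives
\[
\sigma(\rho_n)=1+\delta(\rho_{n-2})+2\epsilon(\rho_{n-2}).
\]
Lemma~\ref{lem:staircasefullrank} says $\epsilon(\rho_{n-2})=0$. Lemma~\ref{lem:staircaseimbalance}, applied with $m=n-2$, gives $\delta(\rho_{n-2})=\lceil (n-2)/2\rceil$. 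Hence
\[
\sigma(\rho_n)=1+\Bigl\lceil\tfrac{n-2}{2}\Bigr\rceil=\Bigl\lceil\tfrac{n}{2}\Bigr\rceil,
\]
since $\lceil (n-2)/2\rceil=\lceil n/2\rceil-1$.

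The small cases are handled separately. For $n=1,2$, the partitions $\rho_1=(1)$ and $\rho_2=(2,1)$ are hooks, so $\sigma=1=\lceil n/2\rceil$ by the remark after Corollary~\ref{cor:blockformula}. For $n=3,4$, we have $\mu=\rho_1$ or $\rho_2$, which are nonempty, and the general argument above applies. The conventions $\rho_0=\rho_{-1}=\varnothing$ then need to be consistent only with $\delta(\varnothing)=0$ and an empty $B$ counted as full rank.

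The main thing to check is Lemma~\ref{lem:staircaseimbalance}, which the paper calls immediate. Antidiagonal $d$ contains $d-1$ boxes for $d=2,\dots,m+1$, and its color alternates with $d$. So $b-w$ is the alternating sum $1-2+3-\cdots\pm m$, whose absolute value is $\lceil m/2\rceil$. The sign of this sum is that of the last term, which matches the majority-color claim.

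Beyond this, every ingredient is already in place: the triangular-submatrix argument of Lemma~\ref{lem:staircasefullrank} is the substantive part, and it has been done. So I expect no real obstacle; the only care needed is the index shift from $n$ to $n-2$ and the boundary cases.
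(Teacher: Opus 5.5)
Your proposal is correct and follows the paper's proof: set $\mu=\rho_n^{\square}=\rho_{n-2}$, use the imbalance lemma to get $\delta(\mu)=\lceil (n-2)/2\rceil$ and the triangular-submatrix lemma to get $\epsilon(\mu)=0$, then substitute into $\sigma(\rho_n)=1+\delta(\mu)+2\epsilon(\mu)$. The only difference is cosmetic: you treat $n=2$ as a hook, whereas the paper runs it through the general argument with $\rho_0=\varnothing$. Your alternating-sum check of the imbalance lemma is also correct.
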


\begin{proof}
The case $n=1$ is immediate, since $\rho_1=(1)$ has a single orbit.

Now suppose $n\geq 2$, and put
\[
\mu=\rho_n^{\square}.
\]
By Lemma~\ref{lem:staircaseselfsimilar},
\[
\mu=\rho_{n-2}.
\]
Lemma~\ref{lem:staircaseimbalance} gives
\[
\delta(\mu)
=
\delta(\rho_{n-2})
=
\Bigl\lceil \tfrac{n-2}{2}\Bigr\rceil,
\]
while Lemma~\ref{lem:staircasefullrank} gives
\[
\epsilon(\mu)=0.
\]
Substituting these values into the decomposition of
Proposition~\ref{prop:checkerboardrank}, we obtain
\[
\begin{aligned}
\sigma(\rho_n)
&=
1+\delta(\mu)+2\epsilon(\mu)\\
&=
1+\Bigl\lceil \tfrac{n-2}{2}\Bigr\rceil\\
&=
\Bigl\lceil \tfrac{n}{2}\Bigr\rceil.
\end{aligned}
\]
\end{proof}

\begin{example}
\label{ex:staircase54321}
Take $\rho_5=(5,4,3,2,1)$, the truncation $\rho_7^{\square}$. Of its
fifteen boxes, nine are black and six are white
(Lemma~\ref{lem:staircaseantidiag}), so the minority class ordered as in Lemma~\ref{lem:staircasefullrank} is
\[
M=\{(2,1),(4,1),(1,2),(3,2),(2,3),(1,4)\}.
\]
Applying $\phi(i,j)=(i,j+1)$ gives the
black boxes indexing the columns of $S$, in the same order:
\[
\phi(M)=\{(2,2),(4,2),(1,3),(3,3),(2,4),(1,5)\}.
\]
The resulting submatrix of the biadjacency matrix is:
\[
S=
\begin{pmatrix}
1&0&0&0&0&0\\
0&1&0&0&0&0\\
1&0&1&0&0&0\\
1&1&0&1&0&0\\
1&0&1&1&1&0\\
0&0&1&0&1&1
\end{pmatrix},
\]
lower triangular with unit diagonal, so $\det_{\F}S=1$ and
$\rank_{\F}B_{\rho_5}=6=\min\{9,6\}$; thus $\epsilon(\rho_5)=0$. Since
$\delta(\rho_5)=|9-6|=3=\lceil5/2\rceil$, Theorem~\ref{thm:staircase}
gives
\[
\sigma(\rho_7)=1+\delta(\rho_5)+2\epsilon(\rho_5)=1+3+0=4
=\Bigl\lceil\tfrac{7}{2}\Bigr\rceil.
\]
\end{example}

\begin{remark}
\label{rem:staircasevsrectangle}
Theorem~\ref{thm:staircase} and the rectangular formula
$\sigma(n^m)=\gcd(m,n)$ of Theorem~\ref{thm:gridnullity} occupy opposite
extremes of the decomposition
\[
\sigma(\lambda)-1=\underbrace{\delta(\lambda^{\square})}_{\text{imbalance}}
+\underbrace{2\epsilon(\lambda^{\square})}_{\text{rank deficiency}}.
\]

For a square $\lambda=n^n$ the truncation $\lambda^{\square}=(n-1)^{n-1}$
has equally balanced color classes (for $n$ odd) or almost balanced (for $n$ even), so $\delta \in \{0,1\}$ and the orbit count is carried
entirely or almost entirely by the rank-deficiency term. For the staircase the situation
reverses: $\epsilon\equiv0$ and the orbit count is carried entirely by
the checkerboard imbalance. Thus the staircase is an extremal family for
which the lower bound of Corollary~\ref{cor:checkerboardlowerbound} is
always attained.
\end{remark}
\subsection{An integral refinement}

In this section we show the adjacency matrix nullity we are computing is the same in characteristic zero as over $\F$. A \emph{quadriculated disk} is a topological disk decomposed into
quadrilaterals in such a way that every interior vertex belongs to
exactly four quadrilaterals. In particular, every Young diagram,
viewed as a union of unit squares, is a quadriculated disk. Deift and Tomei \cite{DeiftTomei} proved that when $b(\mu)=w(\mu)$ then the corresponding matrix $B_\mu$ has determinant $0$ or $\pm 1$.

Saldanha and Tomei extended this to arbitrary $b(\mu)$ and $w(\mu)$ and proved a strong integral result about the
black-to-white adjacency matrix of a quadriculated disk
\cite{SaldanhaTomei}. In our notation, their theorem has the following
consequence.

\begin{theorem}[Saldanha--Tomei]
\label{thm:saldanhatomei}
Let $\mu$ be a partition, and regard its biadjacency matrix
$B_\mu$ as a matrix over $\mathbb Z$. If
\[
r=\rank_{\mathbb Q}(B_\mu),
\]
then $B_\mu$ is equivalent over $\mathbb Z$ to a matrix of the
form
\[
\begin{pmatrix}
I_r&0\\
0&0
\end{pmatrix}.
\]
Equivalently, the Smith normal form of $B_\mu$ has $r$ nonzero
entries, all of which are equal to $1$.
\end{theorem}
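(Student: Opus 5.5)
The statement is attributed to Saldanha--Tomei, so the natural route is to extract it from their theorem on quadriculated disks. Still, I would organize a self-contained argument by induction on $|\mu|$, using elementary integer row and column operations that preserve both the Smith normal form and $\rank_{\mathbb Q}$.

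The target is to show that the determinantal divisors satisfy $d_k(B_\mu)=1$ for all $k\leq r$. Equivalently, I want to reduce $B_\mu$ over $\mathbb Z$ to the direct sum of an identity block and zero.

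The plan is a peeling step. Take a corner box $c$ of $\mu$, that is, a removable box at the end of some row $i$ with $\mu_{i+1}<\mu_i$. Its neighbours in $G_\mu$ are at most its left neighbour $(i,\mu_i-1)$ and its upper neighbour $(i-1,\mu_i)$. There are two cases.

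\begin{itemize}
\item \textbf{Leaf case.} If $c$ has exactly one neighbour $u$, then the row (or column) of $c$ is a unit vector with its entry in the column (or row) of $u$. Clearing that column using this $\pm1$ pivot splits off a block $(1)$. What remains is the biadjacency matrix of $G_\mu$ with both $c$ and $u$ deleted. This is a perfect-matching-style reduction: deleting a domino from a quadriculated region.
\item \textbf{Two-neighbour case.} If $c$ has two neighbours, subtract one neighbour's column from the other's. This uses the face relation: the $2\times2$ block containing $c$ has its fourth box opposite $c$. The effect is to convert the configuration into a smaller quadriculated region in which some box again becomes a leaf.
\end{itemize}

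The difficulty is that after deleting a domino the region is generally no longer a Young diagram. So the induction must be set up for the broader class of simply connected quadriculated disks, which is exactly why Saldanha--Tomei work in that generality. Following them, I would prove by induction on the number of squares the following statement: for any quadriculated disk $Q$, the black-to-white matrix has all invariant factors in $\{0,1\}$. The key step is to find, in any nonempty disk, a boundary square whose removal, possibly together with a neighbour via a unimodular operation, leaves a disk or a disjoint union of disks. A disjoint union is handled by block-diagonality.

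The main obstacle is this topological step: guaranteeing that a suitable boundary square exists and that the operation preserves simple connectivity. For Young-diagram-derived regions, my first attempt would be to always peel the square that is lowest in the first column, or a pendant square. If no square is pendant, I would use the $2\times2$ face relation to perform a column operation that makes one pendant.

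Once this is established, the conclusion follows immediately. The number of unit invariant factors equals the rank, so the Smith normal form is $\operatorname{diag}(I_r,0)$ and $B_\mu$ is $\mathbb Z$-equivalent to that matrix. In particular, $\rank_{\F}B_\mu=\rank_{\mathbb Q}B_\mu$, which is the field-independence consequence the section is aiming for.
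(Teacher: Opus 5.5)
Your first sentence is the paper's entire argument. The paper does not reprove this result; it quotes Saldanha--Tomei. It uses only that a Young diagram is a quadriculated disk (every interior lattice point lies in exactly four boxes, and there are no holes), and that $\mathbb Z$-equivalence to $\operatorname{diag}(I_r,0)$ means the Smith normal form has $r$ unit invariant factors.

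The self-contained induction you then set up does not close. The leaf case is sound: a pendant box gives a unit row or column, the pivot splits off $(1)$, and the residual block is exactly the biadjacency matrix of the region with that domino deleted. The two-neighbour case is where it fails. Take $c=(i,\mu_i)$, $L=(i,\mu_i-1)$, $U=(i-1,\mu_i)$, $D=(i-1,\mu_i-1)$. Replacing the column of $L$ by (column of $L$) minus (column of $U$) does make the row of $c$ a unit vector. But after splitting off that pivot, the modified column is
\[
e_{(i,\mu_i-2)}+e_{(i+1,\mu_i-1)}-e_{(i-2,\mu_i)}-e_{(i-1,\mu_i+1)},
\]
keeping only boxes of $\mu$, with the $D$-entry cancelled. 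This is not the neighbourhood vector of any box of any region. Already for $\mu=(3,3,3)$ and $c=(3,3)$ it is $e_{(3,1)}-e_{(1,3)}$. So the residual matrix is not the biadjacency matrix of a smaller quadriculated region, and your inductive hypothesis cannot be applied to it. The claim that the move yields ``a smaller quadriculated region in which some box again becomes a leaf'' is false.

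The remaining step is the one you flag as ``the main obstacle.'' It is the actual content of the theorem, not a technicality.

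\textbf{Face relation unavailable.} After the first leaf deletion you are no longer in a Young diagram, and a box of degree two need not lie in any $2\times2$ block. For example, deleting $\{(1,2),(1,3)\}$ from $\mu=(3,2)$ leaves an L-tromino whose corner box has two neighbours and no block.

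\textbf{Simple connectivity must enter globally.} A column difference at a degree-two box is equally available in the holed region of Remark~\ref{rem:simplyconnected}. There the biadjacency matrix has Smith form $\operatorname{diag}(1,1,1,1,2)$, and carrying out your move produces a residual of determinant $\pm2$. Nothing in your sketch explains why a unit pivot can always be found for disks but not there.

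A genuine proof would need either a precisely defined class of (signed) matrices, closed under your moves and still provably having invariant factors in $\{0,1\}$, or a global argument. It suffices to exhibit a single $r\times r$ minor equal to $\pm1$ with $r=\rank_{\mathbb Q}B_\mu$; this is what Lemma~\ref{lem:staircasefullrank} does for staircases. As written, your proposal establishes the statement only through the citation in its first line.
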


One immediate consequence is that the rank of $B_\mu$ is independent
of the field:
\[
\rank_K(B_\mu)=\rank_{\mathbb Q}(B_\mu)
\]
for every field $K$. We may therefore replace the rank over
$\mathbb F_2$ in Proposition~\ref{prop:checkerboardrank} by the rank
over any field.

\begin{corollary}
\label{cor:integralorbitformula}
Let $\lambda$ be a partition and put
\[
\mu=\lambda^\square.
\]
For every field $K$,
\[
\sigma(\lambda)
=
1+b(\mu)+w(\mu)-2\rank_K(B_\mu).
\]
Equivalently,
\[
\sigma(\lambda)
=
1+\nul_K A(G_\mu).
\]
\end{corollary}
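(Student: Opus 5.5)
The plan is to combine Proposition~\ref{prop:checkerboardrank} with the field-independence of $\rank(B_\mu)$ from Theorem~\ref{thm:saldanhatomei}, and then translate back from $B_\mu$ to the full adjacency matrix over $K$.

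First, the rank statement. By Theorem~\ref{thm:saldanhatomei} there are matrices $P,Q$ that are invertible over $\mathbb Z$ (so $\det P,\det Q=\pm1$) with
\[
PB_\mu Q=\begin{pmatrix} I_r&0\\0&0\end{pmatrix}, \qquad r=\rank_{\mathbb Q}(B_\mu).
\]
For any field $K$, reduce this identity along the canonical ring map $\mathbb Z\to K$. The images of $P$ and $Q$ still have determinant $\pm1\neq0$ in $K$, so they remain invertible. Hence $\rank_K(B_\mu)=r$ for every $K$, and in particular $\rank_K(B_\mu)=\rank_{\F}(B_\mu)$.

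Substituting this into the first formula of Proposition~\ref{prop:checkerboardrank} gives
\[
\sigma(\lambda)=1+b(\mu)+w(\mu)-2\rank_K(B_\mu).
\]

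For the second formula, order the vertices of $G_\mu$ with black boxes first, exactly as in the proof of Proposition~\ref{prop:checkerboardrank}. Then
\[
A(G_\mu)=\begin{pmatrix}0&B_\mu\\ B_\mu^{\mathsf T}&0\end{pmatrix},
\]
viewed now over $K$. Its rank is $\rank_K B_\mu+\rank_K B_\mu^{\mathsf T}=2\rank_K B_\mu$; this computation uses no property of the characteristic. Therefore
\[
\nul_K A(G_\mu)=b(\mu)+w(\mu)-2\rank_K(B_\mu),
\]
and the second formula follows from the first.

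The degenerate case $\mu=\varnothing$ (hooks) is handled by the empty-matrix conventions, with both sides equal to $1$. The only substantive step is that $\mathbb Z$-equivalence survives reduction to an arbitrary field, which holds because unimodular matrices map to invertible ones; everything else is bookkeeping.
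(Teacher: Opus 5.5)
Your proposal is correct and follows the paper's route. You take the field-independence of $\rank B_\mu$ from the Saldanha--Tomei Smith normal form, substitute it into Proposition~\ref{prop:checkerboardrank}, and read off the nullity version from the block form of $A(G_\mu)$; you simply make explicit the step the paper calls immediate, namely that the unimodular matrices $P,Q$ remain invertible after applying $\mathbb Z\to K$.
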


In particular, the one-orbit condition admits a purely integral
formulation.

\begin{corollary}
\label{cor:oneorbitunimodular}
Let $\lambda$ be a partition and put
\[
\mu=\lambda^\square.
\]
Then
\[
\sigma(\lambda)=1
\]
if and only if
\[
b(\mu)=w(\mu)
\qquad\text{and}\qquad
|\det B_\mu|=1.
\]
Equivalently, $\lambda$ has one billiard orbit if and only if the
two checkerboard color classes of $\mu$ have the same size and
$B_\mu$ is unimodular over $\mathbb Z$.
\end{corollary}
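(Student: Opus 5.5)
The plan is to combine Corollary~\ref{cor:oneorbitcheckerboard}(1) with Corollary~\ref{cor:integralorbitformula}, i.e.\ with the field-independence of the rank coming from Theorem~\ref{thm:saldanhatomei}. By Corollary~\ref{cor:oneorbitcheckerboard}(1), $\sigma(\lambda)=1$ holds exactly when $b(\mu)=w(\mu)$ and $B_\mu$ has full rank over $\F$. When the classes are balanced, $B_\mu$ is square, so full rank over $\F$ means $\det B_\mu\not\equiv 0\pmod 2$. In that case $\det B_\mu$ is an odd integer. The empty case $\mu=\varnothing$ is covered by the convention that the $0\times0$ matrix has determinant $1$.

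The key step is to pin this odd determinant down to $\pm1$. Suppose $b(\mu)=w(\mu)=N$, and set $r=\rank_{\mathbb Q}B_\mu$.
\begin{itemize}
\item By Theorem~\ref{thm:saldanhatomei} there are $P,Q\in GL_N(\mathbb Z)$ with $PB_\mu Q=\begin{pmatrix}I_r&0\\0&0\end{pmatrix}$.
\item Taking determinants gives $\det B_\mu=\pm\det\begin{pmatrix}I_r&0\\0&0\end{pmatrix}$, since $\det P,\det Q\in\{\pm1\}$.
\item Hence $\det B_\mu\in\{0,\pm1\}$, recovering Deift--Tomei in this setting.
\end{itemize}
So the dichotomy is clean: $|\det B_\mu|=1$ exactly when $r=N$, and $\det B_\mu=0$ otherwise.

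The two directions then follow quickly.
\begin{itemize}
\item If $\sigma(\lambda)=1$, the classes are balanced and $\det B_\mu$ is odd. Since $\det B_\mu\in\{0,\pm1\}$, this forces $|\det B_\mu|=1$.
\item Conversely, if $b(\mu)=w(\mu)$ and $|\det B_\mu|=1$, then $\det B_\mu$ reduces to $1$ in $\F$. Thus $B_\mu$ is nonsingular over $\F$, and Corollary~\ref{cor:oneorbitcheckerboard}(1) gives $\sigma(\lambda)=1$.
\end{itemize}
Alternatively, this direction follows from Corollary~\ref{cor:integralorbitformula} with $K=\mathbb Q$: it gives $\sigma(\lambda)=1+2N-2N=1$.

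For the ``equivalently'' clause, I would observe that a square integer matrix is unimodular, meaning invertible over $\mathbb Z$, precisely when its determinant is $\pm1$. The only point needing care is the passage from ``odd'' to ``$\pm1$'', and this is entirely supplied by the Smith normal form statement. I expect no real obstacle beyond citing Theorem~\ref{thm:saldanhatomei} correctly.
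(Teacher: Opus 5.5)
Your proposal is correct and follows the paper's proof: you reduce via Corollary~\ref{cor:oneorbitcheckerboard}(1) to $b(\mu)=w(\mu)$ together with nonsingularity of $B_\mu$ over $\F$, and then use Theorem~\ref{thm:saldanhatomei} to force $\det B_\mu\in\{0,\pm1\}$, so an odd determinant must be $\pm1$. Your extra details (taking determinants of $PB_\mu Q$, the empty case, and the characterization of unimodularity by $\det=\pm1$) just make explicit what the paper compresses into ``the result follows.''
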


\begin{proof}
By Corollary~\ref{cor:oneorbitcheckerboard}, the condition
$\sigma(\lambda)=1$ is equivalent to $b(\mu)=w(\mu)$ and
$B_\mu$ being nonsingular over $\mathbb F_2$. When $B_\mu$ is
square, Theorem~\ref{thm:saldanhatomei} implies that either
$\det B_\mu=0$ or
\[
|\det B_\mu|=1.
\]
The result follows.
\end{proof}

The same theorem also gives an integral interpretation of the full
orbit number. Since the Smith normal form of $B_\mu$ has only
$1$'s and $0$'s, the same is true of the adjacency matrix
\[
A(G_\mu)
=
\begin{pmatrix}
0&B_\mu\\
B_\mu^{\mathsf T}&0
\end{pmatrix}.
\]
Consequently,
\[
\operatorname{coker}A(G_{\lambda^\square})
\cong
\mathbb Z^{\,\sigma(\lambda)-1}.
\]
Thus the number of billiard orbits is one more than the free rank of
the cokernel of the integer adjacency matrix of
$G_{\lambda^\square}$, and this cokernel has no torsion.

\begin{remark}
\label{rem:simplyconnected}
The integral conclusion of
Theorem~\ref{thm:saldanhatomei} is not restricted to Young diagrams (indeed most of our analysis in this paper can extend beyond just partitions).
Saldanha and Tomei require that every interior vertex belongs to exactly four
quadrilaterals. In particular, their theorem applies to every simply
connected \emph{polyomino}.

Thus the relevant geometric hypothesis is not that the row lengths
form a partition, but rather that the region has no holes, something clearly satisfied in our setting.
The assumption of simple connectivity is critical.
Consider the polyomino obtained from a $4\times 3$ rectangle by
removing its two central squares:
\[
\begin{tikzpicture}[scale=.5, baseline=(current bounding box.center)]
  \draw (0,0) rectangle (1,1);
  \draw (1,0) rectangle (2,1);
  \draw (2,0) rectangle (3,1);

  \draw (0,1) rectangle (1,2);
  \draw (2,1) rectangle (3,2);

  \draw (0,2) rectangle (1,3);
  \draw (2,2) rectangle (3,3);

  \draw (0,3) rectangle (1,4);
  \draw (1,3) rectangle (2,4);
  \draw (2,3) rectangle (3,4);
\end{tikzpicture}
\]
This region has one hole, and its cell-adjacency graph is the cycle
$C_{10}$. Its checkerboard color classes both have five vertices.
With suitable orderings of the black and white cells, its
biadjacency matrix is
\[
B=
\begin{pmatrix}
1&0&1&0&0\\
1&1&0&0&0\\
0&1&0&0&1\\
0&0&1&1&0\\
0&0&0&1&1
\end{pmatrix}.
\]
A direct calculation gives
\[
\det B=2.
\]
Indeed, the Smith normal form of $B$ is
\[
\operatorname{diag}(1,1,1,1,2),
\]
and therefore
\[
\operatorname{coker}(B)
\cong
\mathbb Z/2\mathbb Z.
\]
Thus the torsion-free conclusion of
Theorem~\ref{thm:saldanhatomei} can fail for a polyomino with a hole.

\end{remark}

\subsection{Asymptotically partitions have many billiard orbits}
Looking at Table \ref{tab:orbitdistribution} we see the fraction of all partitions with one orbit appears to be declining toward zero as $n$ grows. Much more is true:

\begin{theorem}
\label{thm:manyorbits}
Fix $d \geq 1$. Let $a(n,d)$ be the number of partitions $\lambda$ of $n$ with $\sigma(\lambda)\leq d$ and $p(n)$ the number of partitions of $n$. Then:
\[
\lim_{n \rightarrow \infty}\frac{a(n,d)}{p(n)}=0.
\]
\end{theorem}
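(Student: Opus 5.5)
The plan is to use the checkerboard lower bound, Corollary~\ref{cor:checkerboardlowerbound}: $\sigma(\lambda)\ge 1+|b(\mu)-w(\mu)|$ with $\mu=\lambda^\square$. Then $\sigma(\lambda)\le d$ forces $|\mathrm{BG}(\mu)|\le d-1$, where $\mathrm{BG}(\mu)=b(\mu)-w(\mu)$ is the BG-rank of $\mu$, computed explicitly above as the signed count of odd parts. So it suffices to show that for almost all $\lambda\vdash n$, the BG-rank of $\lambda^\square$ exceeds $d-1$ in absolute value. Since $d$ is fixed, this reduces to showing that for each fixed integer $k$, the proportion of $\lambda\vdash n$ with $\mathrm{BG}(\lambda^\square)=k$ tends to $0$, and then summing over the $2d-1$ values $|k|\le d-1$.

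First I would relate $\mathrm{BG}(\lambda^\square)$ to a statistic of $\lambda$ itself. Deleting the first row and column shifts the checkerboard pattern: box $(i,j)$ of $\mu$ is box $(i+1,j+1)$ of $\lambda$, which has the same parity of $i+j$. Hence $\mathrm{BG}(\mu)$ is the BG-rank of $\lambda$ minus the contribution of the first row and first column (the hook at $(1,1)$). That contribution is bounded by a quantity depending only on the parities of $\lambda_1$ and $\ell$, so it lies in $\{-1,0,1\}$, and therefore $|\mathrm{BG}(\mu)-\mathrm{BG}(\lambda)|\le 1$. (Checking this directly from the alternating odd-row formula is routine.) Consequently $\sigma(\lambda)\le d$ implies $|\mathrm{BG}(\lambda)|\le d$, and it suffices to prove that for each fixed $k$,
\[
\frac{\#\{\lambda\vdash n:\mathrm{BG}(\lambda)=k\}}{p(n)}\to 0 .
\]

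For this last step I would use the Berkovich--Garvan generating function \cite{BerkovichGarvan2006}. By the $2$-core/$2$-quotient decomposition, partitions with BG-rank $k$ correspond bijectively to pairs of partitions (the $2$-quotient) of total size $(n-c_k)/2$, where $c_k=k(2k-1)$ is the size of the $2$-core, which is a staircase. The count is therefore $p_2\bigl((n-c_k)/2\bigr)$ when the parity matches and $0$ otherwise, where $p_2(m)$ is the number of bipartitions of $m$. By Meinardus/Hardy--Ramanujan asymptotics, $\log p_2(m)\sim 2\pi\sqrt{m/3}$, so
\[
\log p_2(n/2)\sim 2\pi\sqrt{n/6}=\log p(n),
\]
and leading exponents agree. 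The main obstacle is therefore the polynomial prefactors. We have $p(n)\sim e^{\pi\sqrt{2n/3}}/(4\sqrt3\,n)$, and the analogous formula for $p_2(m)$ has prefactor of order $m^{-5/4}$. I expect the ratio $p_2((n-c_k)/2)/p(n)$ to be of order $n^{-1/4}$, which tends to $0$.

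A cleaner way to make this rigorous would be to prove that the BG-rank distribution spreads out. The map $\lambda\mapsto(\text{2-core},\text{2-quotient})$ shows that the number of partitions of $n$ with BG-rank $k$ is $p_2(N_k)$ with $N_k=(n-c_k)/2$. Since $p_2$ is increasing, $p_2(N_k)\ge p_2(N_0 - O(K^2))$ for all $|k|\le K$. For $K$ a small multiple of $n^{1/4}$, the subexponential asymptotics show that $p_2(N_0-O(K^2))\ge \tfrac12 p_2(N_0)$, because $\sqrt{N}$ changes by $O(1)$ in this range. So at least about $K$ values of $k$ (of the right parity) each have counts comparable to the count for $k=0$. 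These all sum to at most $p(n)$, so each fixed $k$ has proportion $O(1/K)=O(n^{-1/4})\to0$. This avoids exact constants and needs only the monotonicity of $p_2$ and $\log p_2(m)\sim c\sqrt m$ with control of increments. That increment control is what I would verify via the standard bound $p_2(m+j)/p_2(m)\le e^{O(j/\sqrt m)}$.
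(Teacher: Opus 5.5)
Your proof is correct and follows the paper's argument. The checkerboard bound gives $|\mathrm{BG}(\lambda^\square)|\le d-1$, deleting the first row and column shifts the BG-rank by at most $1$, and hence $a(n,d)\le\sum_{|j|\le d}p_j(n)$. The only difference is the final step: the paper cites Baker--Males for $p_j(n)/p(n)\to 0$, whereas you derive it from the Berkovich--Garvan count $p_j(n)=p_2\bigl((n-j(2j-1))/2\bigr)$ plus monotonicity and asymptotics of bipartition numbers. Your spreading argument also works with $K$ fixed and large, which needs only $p_2(m-1)/p_2(m)\to 1$.
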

\begin{proof}
Recall that the checkerboard imbalance $\delta(\mu)$ is the absolute
value of the BG-rank of $\mu$. Since $\lambda^\square$ is obtained by
deleting the first row and first column of $\lambda$, the BG-ranks of
$\lambda$ and $\lambda^\square$ differ by at most $1$. Hence
\(
\delta(\lambda)
\leq
\delta(\lambda^\square)+1.
\)
If $\sigma(\lambda)\leq d$, then
Corollary~\ref{cor:checkerboardlowerbound} gives
\(
\delta(\lambda^\square)\leq d-1,
\)
and therefore
\(
\delta(\lambda)\leq d.
\)

Let $p_j(n)$ denote the number of partitions of $n$ having BG-rank
$j$. It follows that
\[
a(n,d)
\leq
\sum_{j=-d}^{d} p_j(n).
\]
For each fixed $j$, the results of Baker and Males~\cite{BakerMales}
give
\[
\lim_{n\to\infty}\frac{p_j(n)}{p(n)}=0.
\]
Since $d$ is fixed, the sum contains only finitely many terms, and hence
\[
0
\leq
\frac{a(n,d)}{p(n)}
\leq
\sum_{j=-d}^{d}\frac{p_j(n)}{p(n)}
\longrightarrow 0.
\]
This proves the result.
\end{proof}

\section{Enumeration of one-orbit partitions}

Let $a(n)$ be the number of partitions $\lambda\vdash n$ with
$\sigma(\lambda)=1$. The odd-tiling criterion gives an exact
generating-function reduction to Young diagrams with an odd number of
domino tilings.

Let $\mathcal D_{\mathrm{odd}}$ be the set of partitions whose Young
diagrams have an odd number of domino tilings, including the empty
partition. For nonempty $\mu$, define
\[
h(\mu)
=
|\mu|+\ell(\mu)+\mu_1+1,
\]
and put
\[
h(\varnothing)=1.
\]

\begin{proposition}
\label{prop:oneorbitgf}
The generating function for one-orbit partitions is
\[
{
\sum_{n\geq1}a(n)q^n
=
\frac{1}{(1-q)^2}
\sum_{\mu\in\mathcal D_{\mathrm{odd}}}
q^{h(\mu)}.}
\]
\end{proposition}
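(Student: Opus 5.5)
The plan is to parametrize partitions $\lambda$ by the triple $(\mu,\lambda_1,\ell)$ with $\mu=\lambda^\square$, and then apply Theorem~\ref{thm:oddtiling}, which says $\sigma(\lambda)=1$ exactly when $\mu\in\mathcal D_{\mathrm{odd}}$. The generating function then becomes a sum over $\mu\in\mathcal D_{\mathrm{odd}}$ of the generating function of the fibre $\{\lambda:\lambda^\square=\mu\}$. So the whole proof reduces to showing that, for each fixed $\mu$,
\[
\sum_{\lambda:\ \lambda^\square=\mu} q^{|\lambda|}=\frac{q^{h(\mu)}}{(1-q)^2}.
\]

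\emph{Fibre for nonempty $\mu$.} Let $\mu=(\mu_1,\ldots,\mu_k)$ with $k=\ell(\mu)\geq1$. By \eqref{eq:squarepartition}, a partition $\lambda$ with $\lambda^\square=\mu$ must have $\lambda_{i+1}=\mu_i+1$ for $1\leq i\leq k$. Every remaining row $\lambda_i$ with $i\geq k+2$ must become a zero part after subtracting $1$, so it equals $1$. Hence
\[
\lambda=(\lambda_1,\mu_1+1,\ldots,\mu_k+1,1^{\ell-k-1}),
\]
where the only free data are $\lambda_1\geq \mu_1+1$ (needed so that $\lambda_1\geq\lambda_2$) and $\ell\geq k+1$.

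Conversely, each such choice is a partition whose truncation is $\mu$. So the fibre is in bijection with pairs $(a,b)\in\mathbb Z_{\geq0}^2$, via $\lambda_1=\mu_1+1+a$ and $\ell=k+1+b$. Summing the rows gives
\[
|\lambda|=\lambda_1+|\mu|+k+(\ell-k-1)=|\mu|+k+\mu_1+1+a+b=h(\mu)+a+b.
\]
Summing $q^{|\lambda|}$ over $a,b\geq0$ therefore gives $q^{h(\mu)}/(1-q)^2$.

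\emph{Fibre for empty $\mu$.} Here $\lambda^\square=\varnothing$ means $\lambda_2\leq1$, so $\lambda$ is a hook $(a,1^b)$ with $a\geq1$ and $b\geq0$. Writing $a=1+a'$, we get $|\lambda|=1+a'+b$, so the fibre contributes $q/(1-q)^2=q^{h(\varnothing)}/(1-q)^2$. This is consistent with the convention that the empty diagram has one tiling, so $\varnothing\in\mathcal D_{\mathrm{odd}}$, and with the earlier remark that hooks have one orbit.

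\emph{Conclusion.} The fibres partition the set of all nonempty partitions. Summing the fibre generating functions over $\mu\in\mathcal D_{\mathrm{odd}}$, and using Theorem~\ref{thm:oddtiling} to identify those fibres with exactly the one-orbit partitions, gives the formula.

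The only delicate point is the bookkeeping in \eqref{eq:squarepartition}. Rows of $\lambda$ equal to $1$ produce zero parts, which are omitted; this is precisely what allows the arbitrary tail $1^{\ell-k-1}$. One must also check that no other $\lambda$ maps to $\mu$: any row $\lambda_i\geq2$ with $i\geq2$ contributes a nonzero part, so the first $k$ rows below the top are forced. Once this is pinned down, the rest is a geometric-series computation.
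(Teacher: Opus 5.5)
Your proof is correct and is essentially the paper's argument. Both decompose the one-orbit partitions into fibres $\{\lambda:\lambda^\square=\mu\}$ over $\mu\in\mathcal D_{\mathrm{odd}}$ via Theorem~\ref{thm:oddtiling}, parametrize each fibre by the first-row length $\lambda_1\geq\mu_1+1$ and the number of trailing rows of length one, and sum the two geometric series to get $q^{h(\mu)}/(1-q)^2$, treating the hooks separately when $\mu=\varnothing$. Your explicit check that the middle rows are forced, and that every such $\lambda$ truncates back to $\mu$, is just a slightly more careful version of the same bookkeeping.
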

\begin{proof}
If $\mu\neq\varnothing$, the partitions satisfying
$
\lambda^\square=\mu
$
are precisely
\[
\lambda
=
(a,\mu_1+1,\ldots,\mu_{\ell(\mu)}+1,1^t),
\]
where
\[
a\geq\mu_1+1,
\qquad
t\geq0.
\]
Here the middle rows $\mu_i+1$ are forced: they are the rows recovered
by restoring the first column and first row deleted by the
$\square$ operation. Only two features of $\lambda$ are free. The first
row length $a$ may be any integer at least $\mu_1+1$, since it must be
long enough to dominate the row below it; and any number $t\geq0$ of
trailing rows of length one may be appended, as these are erased by the
deletion of the first column and so do not affect $\lambda^\square$.
These two choices vary independently, which is the source of the two
factors below.

The size of such a $\lambda$ is
\[
|\lambda|
=
a+|\mu|+\ell(\mu)+t.
\]
Summing over the two independent parameters $a$ and $t$ factors as a
product of geometric series,
\[
\sum_{a\geq\mu_1+1}\ \sum_{t\geq0}
q^{\,a+|\mu|+\ell(\mu)+t}
=
q^{\,|\mu|+\ell(\mu)}
\underbrace{\Bigl(\sum_{a\geq\mu_1+1}q^{a}\Bigr)}_{=\,q^{\mu_1+1}/(1-q)}
\underbrace{\Bigl(\sum_{t\geq0}q^{t}\Bigr)}_{=\,1/(1-q)}
=
\frac{q^{\,|\mu|+\ell(\mu)+\mu_1+1}}{(1-q)^2}.
\]
The exponent in the numerator is exactly $h(\mu)$. One factor of
$1/(1-q)$ counts the choices of first-row length $a$, the other counts
the number $t$ of appended unit rows.

For $\mu=\varnothing$, the corresponding partitions are the hooks
$(a,1^t)$ with $a\geq1$ and $t\geq0$; the same two independent
parameters contribute
\[
\frac{q}{(1-q)^2}
=
\frac{q^{h(\varnothing)}}{(1-q)^2}.
\]
Summing over all $\mu\in\mathcal D_{\mathrm{odd}}$ and applying
Theorem~\ref{thm:oddtiling}, which identifies the one-orbit partitions
$\lambda$ as those with $\lambda^\square\in\mathcal D_{\mathrm{odd}}$,
completes the proof.
\end{proof}

This identity is exact, but it transfers the principal difficulty to the classification and enumeration of Young diagrams having an odd number of domino tilings, which is not necessarily any easier.

\section{Concluding remarks and open problems}

We have reduced the billiard orbit count $\sigma(\lambda)$ to a series of
increasingly concrete invariants of the cell-adjacency graph: the
dimension of its binary bicycle space, the nullity of its mod-$2$
Laplacian, and finally the nullity of the adjacency matrix of the
truncated diagram $\lambda^{\square}$. This last reduction converts the
one-orbit problem into the parity of a domino-tiling count, and the
checkerboard decomposition
$\sigma(\lambda)-1=\delta(\lambda^{\square})+2\epsilon(\lambda^{\square})$
divides the orbit excess into a  ``color-imbalance'' term and a
``rank-deficiency'' term. Two families sit at the extremes of this
decomposition: rectangles, where the imbalance is as small as possible and the
Fibonacci-polynomial algebra controls the rank deficiency, and
staircases, where the rank deficiency vanishes identically and the
imbalance alone determines the count. Many natural questions remain; we
collect below some that we find most compelling.

For us a central problem is to understand the one-orbit partitions:

\begin{problem}
\label{prob:oneorbit}
Find a simple characterization of the partitions $\lambda$ with
$\sigma(\lambda)=1$, and determine the asymptotic growth of the counting sequence
$a(n)$, the second column of Table~\ref{tab:orbitdistribution}.
\end{problem}

\begin{problem}
\label{prob:zeroforcing}
The argument in the proof of Theorem \ref{thm:durfee-bound} seems to be a special case of a technique in \cite{AIMZeroForcing}, in particular we proved that the vertices corresponding to the end of the first $r$ rows are what this paper calls a \emph{zero forcing set}. Can we find more general examples of zero forcing sets in the graph $G_{\lambda^\square}$ to obtain results on $\sigma(\lambda)$?
\end{problem}

\begin{problem}
\label{prob:fathook}
For rectangular partitions the orbit number is governed by identities
among Fibonacci polynomials over $\F$. Determine whether this algebraic approach extends to
more complicated families. A reasonable first place to start is the family of  ``fat hook'' partitions
$\lambda=(a^{m},b^{n})$ with $a>b$. Can we find an explicit formula for $\sigma(a^m,b^n)$?
\end{problem}

\begin{problem}
\label{prob:reductions}
Determine which transformations of $\lambda$ preserve the orbit number,
and whether each resulting equivalence class contains a canonical
reduced shape. 
\end{problem}

\begin{problem}
\label{prob:holes}
Extend the structural results of this paper from Young diagrams to
arbitrary finite connected unions of unit squares with holes.  The
medial-link and bicycle-space framework of
Section~\ref{sec:graph-theory} continues to apply, and the bounded face
boundaries of the cell-adjacency graph still give a basis of its cycle
space.  Thus an adjacency-nullity formula can be obtained by adjoining
one additional face for each hole.

Determine how these additional ``hole faces'' affect the orbit number
in structural terms.  In particular, characterize the one-orbit
regions, determine how creating or filling a hole changes the orbit
number, and find bounds on this change in terms of the number and
geometry of the holes.  It would also be interesting to understand the
corresponding integral theory.  Remark~\ref{rem:simplyconnected} shows
that, unlike the simply connected case, torsion in the cokernel of the
relevant integer adjacency matrix can occur once holes are present.
\end{problem}

\end{document}